\numberwithin{equation}{section}
\newtheorem{theorem}{Theorem}[section]
\newtheorem{lemma}{Lemma}[section]
\newtheorem{remark}{Remark}[section]
\newtheorem{corollary}{Corollary}[section]
\title{Quasi-optimal convergence rate for   adaptive mixed finite
element methods
\thanks{This work was supported in part by the  Natural Science Foundation of China
(10771150), the National Basic Research Program of China
(2005CB321701), and the Program for New Century Excellent Talents in
University (NCET-07-0584)}}
\author{Shaohong Du$^{1,2}$\thanks{Email: shaohongdu@gmail.com }\hspace{15mm}Xiaoping
Xie$^{1}$\thanks{Corresponding author. Email:
xpxiec@gmail.com}\\
 { \small $_1$ \ School of Mathematics, Sichuan University, Chengdu 610064, China}\ \ \ \ \ \ \\
 { \small $_2$ \ School of Science, Chongqing Jiaotong University, Chongqing 400047, China}
 }
\begin{document}
 \date{}
 \maketitle
\maketitle
\begin{small}
  {\bf{Abstract.} \rm{For  adaptive mixed finite element methods
  (AMFEM), we first introduce the
 data oscillation to analyze,  without
   the restriction that  the inverse of the coefficient matrix of the
  partial differential equations (PDEs) is a piecewise polynomial matrix, efficiency of the a posteriori error
  estimator  Presented by Carstensen [Math. Comput., 1997, 66: 465-476] for Raviart-Thomas,
  Brezzi-Douglas-Morini, Brezzi-Douglas-Fortin-Marini elements.
  Second, we prove that the sum of the
  stress variable error in a weighted norm and the scaled error
  estimator is of geometric decay, namely, it reduces with a fixed
   factor between two successive
  adaptive loops, up to an oscillation of the right-hand side term
  of the PDEs. Finally, with the help of this geometric decay, we
  show that the stress variable error in a weighted norm plus
  the oscillation of data yields a decay rate in
  terms of the number of degrees of freedom as dictated by the best
  approximation for this combined nonlinear quantity.}}
\end{small}

\begin{it} Key words.\end{it} mixed finite element, error reduction, convergence, optimal
 cardinality, adaptive algorithm

\begin{it} AMS subject classifications.\end{it} 65N30, 65N50, 65N15,
65N12, 41A25

\setcounter{remark}{0} \setcounter{lemma}{0} \setcounter{theorem}{0}
\setcounter{section}{0} \setcounter{equation}{0}
\section {Introduction and main results}

Adaptive methods for the numerical solution of the PDEs are now
standard tools in science and engineering to achieve better accuracy
with minimum degrees of freedom. The adaptive procedure consists of
loops of the form
\begin{equation}\label{quasi-optimality 0}
SOLVE\rightarrow ESTIMATE\rightarrow MARK\rightarrow REFINE.
\end{equation}
A posteriori error estimation (ESTIMATE) is
  an essential ingredient of adaptivity. We refer
  to \cite{Babuska;Rheinboldt1,Babuska;Rheinboldt2,Anis;Oden,
Babuska;Strouboulis,Bangerth;Rannacher,
Carstensen;Hu,Eriksson;Estep;Hansbo,Verfurth,Du;Xie} for related
work on this topic. The analysis of convergence and optimality of
the above whole algorithm is still in its infancy. In recent years,
there have been some results for the  standard adaptive finite
element method
\cite{Dorfler,Morin2000,Morin2003,Morin;Nochetto;Siebert,
Cascon;Kreuzer;Nochetto;Siebert}. In
\cite{Carstensen;Hoppe,Chen;Holst;Xu,Becker,Carstensen;Rabus},
convergence analysis has been carried out for the AMFEM.

Let $\Omega$ be a bounded polygonal in ${\mathbb{R}}^{2}$. We
consider the following homogeneous Dirichlet boundary value problem
for a second order elliptic PDE:
\begin{equation}\label{quasi-optimality 1}
 \left \{ \begin{array}{ll}
  -\mbox{div}(A\nabla u)=f\ \ \  & \mbox{in}\ \ \Omega,\\
 \ \hspace{17mm}u=0 & \mbox{on}\ \ \ \partial{\Omega},
 \end{array}\right.
\end{equation}
where $A\in L^{\infty}(\Omega;{\mathbb{R}}^{2\times 2})$ is a
symmetric and uniformly positive definite matrix, and $f\in
L^{2}(\Omega)$. The choice of boundary conditions is made for ease
of presentation, since similar results are valid for other boundary
conditions. In \cite{Carstensen0}, Carstensen presented an a posteriori
error estimate for the mixed finite element method of
(\ref{quasi-optimality 1}), and analyzed its efficiency under some
restriction of the coefficient matrix. In this paper,  we shall prove its
efficiency without
 the restriction, but at the expense of introducing 
data oscillation, which is right a component of the new concept of
error ({\it the total error}) (see
\cite{Cascon;Kreuzer;Nochetto;Siebert}).

To summarize the first main result, let $\mathcal{T}_{h},M_{h}\times
L_{h},(p_{h},u_{h}),\eta_{h,\kappa},\widetilde{{\rm osc}}_{h}$
denote the meshes, a pair of finite element spaces, a pair of
corresponding discrete solutions, the estimators and oscillations in
turn. We avoid the  assumption that $A^{-1}p_{h}$
is a polynomial on each element, which is required in \cite{Carstensen0} for the proof of
efficiency of the a posteriori error estimator, and obtain the following efficient estimates for
the Raviart-Thomas, the Brezzi-Douglas-Marini, or the
Brezzi-Douglas-Fortin-Marini elements
\begin{equation*}
\eta_{h,\kappa}^{2}\lesssim||A^{-1/2}(p-p_{h})||_{L^{2}(\Omega)}^{2}+
||h^{\kappa}{\rm
div}(p-p_{h})||_{L^{2}(\Omega)}^{2}+||u-u_{h}||_{L^{2}(\Omega)}^{2}+\widetilde{{\rm
osc}}_{h}^{2}.
\end{equation*}

Secondly, we shall analyze convergence and optimality of the AMFEM
of the form (\ref{quasi-optimality 0}). Here we only concern the stress
variable error, which is of interest in many applications.

The convergence analysis of the adaptive finite element method
(AFEM) is very recent, it started with D\"{o}fler \cite{Dorfler},
who introduced a crucial marking, and proved   the strict energy
error reduction of the standard AFEM for the Laplacian under the
condition that the initial mesh $\mathcal{T}_{0}$ satisfies a
fineness assumption. Morin, Nochetto and siebert
\cite{Morin2000,Morin;Nochetto;Siebert} showed that such strict
energy error reduction can not be expected in general. Introducing
the concept of data oscillation and the interior node property, they
proved convergence of the standard AFEM without fineness restriction
on $\mathcal{T}_{0}$ which is valid only for $A$ in (2) being
piecewise constant on $\mathcal{T}_{0}$. Inspired by the work by
Chen and Feng \cite{Chen;Feng}, Mekchay and Nochetto
\cite{Morin;Nochetto;Siebert} extended this result to general second
elliptic operators and proved that the standard AFEM is a
contraction for the total error, namely the sum of the energy error
and oscillation. Recently, Cascon, Kreuzer, Nochetto and Seibert
\cite{Cascon;Kreuzer;Nochetto;Siebert} presented a new error notion,
the so-called {\it quasi-error}, namely the sum of the energy error
and the scaled estimator, and showed without the interior node
property for the self-adjoint second elliptic problem that the
quasi-error is strictly reduced by the standard AFEM even though
each term may not be.

However,  for convergence of the AMFEM, present woks are done only
for the Laplacian for the lowest order Raviart-Thomas elements
\cite{Carstensen;Hoppe,Becker} and any order Raviart-Thomas and
Brezzi-Douglas-Marini elements \cite{Chen;Holst;Xu}. Since the
approximation of mixed finite element methods is a saddle point of
the corresponding energy, there is no orthogonality available, as is
one of main difficulties for convergence of the AMFEM. In this
paper, motivated by the two new notions of the error, by
establishing a quasi-orthogonality result (see Theorem \ref{thm 4.5}) we
proved that the AMFEM is a contraction with respect to the sum of
the stress variable error in a weighted norm and the scaled error
estimator, which is also called the {\it quasi-error}.

To summarize the second main result, let
$\{\mathcal{T}_{k},(M_{k},L_{k}),(p_{k},u_{k}), \eta_{k},
osc_{k}\}_{k\geq0}$ with $\text{div}M_{k}=L_{k}$ be the sequence of
the meshes, a pair of finite element spaces, a pair of corresponding
discrete solutions, the estimators and oscillations produced by the
AMFEM in the $k$-th step.  We prove in Section 5 that the
quasi-error uniformly reduces with a fixed rate between two
successive meshes, up to an oscillation of data $f$, namely
\begin{equation*}
\mathcal{E}_{k+1}^{2}+\gamma\eta_{k+1}^{2}\leq\alpha^{2}(\mathcal{E}_{k}^{2}+\gamma\eta_{k}^{2})+
C{\rm osc}^{2}(f,\mathcal{T}_{k}),
\end{equation*}
where $\alpha\in(0,1)$, $\gamma>0$, 
$$\mathcal{E}_{k}^{2}
:=||A^{-1/2}(p-p_{k})||_{L^{2}(\Omega)}^{2}+||h_{k}{\rm
div}(p-p_{k})||_{L^{2}(\Omega)}^{2},$$
and ${\rm
osc}(f,\mathcal{T}_{k})$ is the oscillation of $f$ over
$\mathcal{T}_{k}$ (see Section 2.5).  We point out here that in some cases, even though
the stress variable error is monotone, strict error reduction may fail. For instance,
when $p_{k}=p_{k+1}$ and $f\in L_{k}$, from the second equation of (\ref{quasi-optimality 2.3}) it follows $\text{div}p_{k}=-f$. Then    it holds $\mathcal{E}_{k}^{2}
=\mathcal{E}_{k+1}^{2}$. On the
other hand, the residual estimator $\eta_{k} :=\eta_{k}(p_{k},
\mathcal{T}_{k})$ displays strict reduction when $p_{k}=p_{k+1}$ but
no monotone behavior in general.

Besides convergence, optimality is another important issue in AFEM
which was first addressed by Binev, Dahmen, DeVore
\cite{Binev;Dahmen;DeVore} and further studied by Stevenson
\cite{Stevenson}, who showed optimality without additional
coarsening  required in \cite{Binev;Dahmen;DeVore}. Both papers
\cite{Binev;Dahmen;DeVore,Stevenson} are restricted to Laplace
operator and rely on suitable marking by data oscillation and the
interior node property. Cascon, Kreuzer, Nochetto and Seibert
\cite{Cascon;Kreuzer;Nochetto;Siebert} succeeded in establishing
quasi-optimality of the AFEM without both the assumption of the
interior node property and marking by data oscillation for the
self-adjoint second elliptic operator.

Since all decisions of the AMFEM in MARK are based on the estimator
$\eta_{k}$, a decay rate for the true error is closely related to
the quality of the estimator, which is described by the global lower
bound
\begin{equation*}
\eta_{k}^{2}\lesssim\mathcal{E}_{k}^{2}+{\rm osc}_{k}^{2}.
\end{equation*}
Hereafter, following the idea in
\cite{Cascon;Kreuzer;Nochetto;Siebert}, we refer to the square root
of right-hand side above as the {\it total error}
\cite{Mekchay;Nochetto}. The lower bound demonstrates that the
estimator is controlled by the error with weights except up to an
oscillation term and one can observe the difference between
$\mathcal{E}_{k}$ and $\eta_{k}$ only when oscillation is large.
Furthermore, from the upper bound
$\mathcal{E}_{k}^{2}\lesssim\eta_{k}^{2}$ and ${\rm
osc}_{k}^{2}\leq\eta_{k}^{2}$ it follows $\mathcal{E}_{k}^{2}+{\rm
osc}_{k}^{2}\lesssim \eta_{k}^{2}$. This implies that the total
error , which is the quantity reduced
by the AMFEM, is controlled by the estimator. Since the estimator itself is an upper bound for the
quasi-error, in view of the global lower bound it holds
\begin{equation}
\mathcal{E}_{k}^{2}+{\rm osc}_{k}^{2}\approx\eta_{k}^{2}\approx
\mathcal{E}_{k}^{2}+\gamma\eta_{k}^{2}.
\end{equation}

In short, the behavior of the AMFEM is intrinsically bonded to the
total error, which measures the approximability of both the flux
$p=A\nabla u$ and data encoded in the oscillation term. Note that
when $A^{-1}p_h$ is a piecewise polynomial vector, oscillation will reduce
to approximation of the right-hand side term $f$ of
(\ref{quasi-optimality 1}) (see Section 2.5).  In general cases, approximation of data
$A$ appeared in ${\rm osc}_{k}^{2}$ couples in
nonlinear fashion with the discrete solutions $p_{k}$.

In Section 6, we shall introduce two approximation classes $\mathbb{A}_{s}$ and
$\mathcal{A}_{0}^{s}$  based on the total error and
oscillation of $f$, respectively. Using a quasi-monotonicity
property of oscillation and a localized discrete upper bound, we
prove the following {\rm quasi-optimal} convergence rate for the
AMFEM in terms of DOFs by assuming the marking parameter
$\theta\in(0,\theta_{*})$ with $0<\theta_{*}<1$ (see Theorem \ref{thm 6.10}):
\begin{equation*}
(\mathcal{E}_{N}^{2}+{\rm osc}_{N}^{2})^{1/2}\leq
C^{s}\Theta^{s}(s,\theta)(|(p,f,A)|_{s}+||f||_{\mathcal{A}_{0}^{s}})(\#\mathcal{T}_{N}-
\#\mathcal{T}_{0})^{-s}.
\end{equation*}

The rest of this paper is organized as follows. In Section 2, we
shall give some preliminaries and details on notations. Some
auxiliary results are included in Section 3 for later usage. Section
4 is devoted to the analysis of efficiency of the a posteriori error
estimator. The proof of convergence for the AMFEM is placed in
Section 5. Finally, we shall prove the quasi-optimal
convergence rate for the AMFEM in Section 6 and give
conclusions in Section 7.

\setcounter{remark}{0}

\setcounter{lemma}{0} \setcounter{theorem}{0}
\setcounter{section}{1} \setcounter{equation}{0}
\section{Preliminaries and notations}
\subsection{Weak formulation}
By splitting (\ref{quasi-optimality 1}) into two equations, the
mixed formulation is given as
\begin{equation}\label{quasi-optimality 2.1}
 \left \{ \begin{array}{ll}
  -\mbox{div}\ p=f \ \ {\rm and}\ \ p=A\nabla u  & \mbox{in}\;  \ \ \Omega\\
\hspace{9mm} u=0 & \mbox{on}\; \ \ \partial{\Omega}.
 \end{array}\right.
\end{equation}
Since the coefficient matrix $A$ is symmetric and uniformly positive
definite, by the Lax-Milgram theorem, there exists a unique solution
$u\in H_{0}^{1}(\Omega)$ to the problem (\ref{quasi-optimality
2.1}). Moreover, the weak formulation of (\ref{quasi-optimality
2.1}) reads as: Find $(p,u)\in H({\rm div},\Omega)\times
L^{2}(\Omega)$ such that
\begin{equation}\label{quasi-optimality 2.2}
\begin{array}{lll}
\displaystyle(A^{-1}p,q)_{0,\Omega}+({\rm div}\ q,u)_{0,\Omega}=0\ \
&{\rm for\ all}&\displaystyle\ q\in H({\rm
div},\Omega) ,\vspace{2mm}\\
\displaystyle({\rm div}\ p, v)_{0,\Omega}=-(f,v)_{0,\Omega}\ \ &{\rm
for\ all}&\displaystyle\ v\in L^{2}(\Omega) ,
\end{array}
\end{equation}
where $H({\rm div},\Omega) :=\{q\in L^{2}(\Omega)^{2}:
{\rm div}\ q\in L^{2}(\Omega)\}$ is endowed with the norm given by
$||q||_{H({\rm div},\Omega)}^{2} :=||q||_{L^{2}(\Omega)}^{2}+||{\rm
div}\ q||_{L^{2}(\Omega)}^{2}$, and $(\cdot,\cdot)_{0,\Omega}$ denotes
$L^{2}$ inner product on $\Omega$.

For a given  shape-regular triangulation $\mathcal {T}_{h}$ of $\Omega$
into triangles, let $M_{h}$ and $L_{h}$ denote  finite
dimensional subspaces of $H({\rm div},\Omega)$ and $L^{2}(\Omega)$,
respectively.
In the step $SOLVE$ a mixed finite element method reads as: Find
$(p_{h},u_{h})\in M_{h}\times L_{h}$ such that
\begin{equation}\label{quasi-optimality 2.3}
\begin{array}{lll}
\displaystyle(A^{-1}p_{h},q_{h})_{0,\Omega}+({\rm div}\
q_{h},u_{h})_{0,\Omega}=0\ \ &{\rm for\ all}&\displaystyle\
q_{h}\in M_{h},\vspace{2mm}\\
\displaystyle({\rm div}\ p_{h},
v_{h})_{0,\Omega}=-(f_{h},v_{h})_{0,\Omega}\ \ &{\rm for\
all}&\displaystyle\ v_{h}\in L_{h},
\end{array}
\end{equation}
where $f_{h}$ is the $L^{2}-$ projection of $f$ over $L_{h}$.

It is well-known that existence and uniqueness of the
solution of (\ref{quasi-optimality 2.2}) hold true, and that the
discrete problem (\ref{quasi-optimality 2.3}) has a unique solution
when a discrete inf-sup-condition is satisfied by the discrete
spaces $M_{h}$ and $L_{h}$ (cf. \cite{Brezzi;Fortin}). So we are
interested in controlling stress variable error $\epsilon
:=p-p_{h}\in H({\rm div},\Omega)$ and displacement error $e
:=u-u_{h}\in L^{2}(\Omega)$, and suppose that the module $SOLVE$
outputs a pair of discrete solutions over $\mathcal{T}_{h}$, namely,
$(p_{h},u_{h})=SOLVE(\mathcal{T}_{h})$.

\subsection{Mixed finite elements}
We consider some 
well-known mixed finite elements
for the discretization problem (\ref{quasi-optimality 2.3}), such as
Raviart-Thomas (RT) elements, Brezzi-Douglas-Marini (BDM) elements
and Brezzi-Douglas-Fortin-Marini (BDFM) elements
\cite{Raviart;Thomas,Brezzi;Douglas,Brezzi;Fortin}, which are
briefly described for all triangle $T\in\mathcal{T}_{h}$ by some
$D_{l}(T)\subset C(T)$ and $M_{l}(T)\subset C(T)^{2}$ given in the
following table.
\begin{center}{\hbox{\hspace{37mm} Examples for mixed finite elements}}
\scriptsize
\begin{tabular}{|c|c|c|} \hline
Element&$M_{l}(T)$&$D_{l}(T)$\\ \hline
RT&$P_{l}^{2}$+$P_{l}x$&$P_{l}$\\ \hline BDM&$P_{l+1}^{2}$&$P_{l}$\\
\hline BDFM&$\{q\in P_{l+1}^{2}: (q\cdot\nu)|_{\partial T}\in
R_{l}(\partial T)\}$&$P_{l}$ \\ \hline
\end{tabular}
\end{center}
Here $P_{l}$ denotes the set of
polynomials of total degree $\leq l$ and $R_{l}(\partial T)$ denotes the set of
polynomials of degree at most $l$ on each edge of $T$ (not
necessary continuous).

By using the above sets $M_{l}(T)$ and
$D_{l}(T)$, the discrete spaces $M_{h}$ and $L_{h}$ are given by
\begin{equation*}
\begin{array}{lll}
\displaystyle M_{h}&:&=\displaystyle\{q_{h}\in H({\rm div},\Omega) :
q_{h}|_{T}\in
M_{l}(T)\ \text{ for }  T\in\mathcal{T}_{h}\},\vspace{2mm}\\
\displaystyle L_{h}&:&=\displaystyle\{v_{h}\in L^{2}(\Omega):
v_{h}|_{T}\in D_{l}(T)\ \text{ for }  T\in\mathcal{T}_{h}\}.
\end{array}
\end{equation*}

\subsection{Assumption on $\mathcal{T}_{h}$}
Let $\mathcal{T}_{h}$ be a shape regular triangulation  in the sense of
\cite{Ciarlet} which satisfies the angle condition, namely there
exists a constant $c_{1}$ such that for all $T\in\mathcal{T}_{h}$
\begin{equation*}
c_{1}^{-1}h_{T}^{2}\leq|T|\leq c_{1}h_{T}^{2},
\end{equation*}
where $h_{T} :={\rm diam}(T)$, and$|T|$ is the area of $T$.

Let $\varepsilon_{h}$ denote the set of element edges in
$\mathcal{T}_{h}$, $J(v)|_{E} :=(v|_{T_{+}})|_{E}-(v|_{T_{-}})|_{E}$
denote the jump of $v\in H^{1}(\bigcup\mathcal{T}_{h})$ over an
interior edge $E :=T_{+}\cap T_{-}$ of length $h_{E}: ={\rm
diam}(E)$, shared by the two neighboring (closed) triangles
$T_{\pm}\in\mathcal {T}_{h}$, specially, $J(v)|_{E} :=(v|_{T})|_{E}$
if $E=\overline{T}\cap\partial\Omega$ . Furthermore, for
$T\in\mathcal{T}_{h}$, we denote by $\omega_{T}$ the union of all
elements in $\mathcal{T}_{h}$ sharing one edge with $T$, and define
the patch of $E\in\varepsilon_{h}$ by
\begin{equation*}
\omega_{E} :=\bigcup\{T\in\mathcal{T}_{h} : E\subset\overline{T}\}.
\end{equation*}
Denote $\Gamma_{h} :=\bigcup\varepsilon_{h}$, and let $J
:H^{1}(\bigcup\mathcal{T}_{h})\rightarrow L^{2}(\Gamma_{h})$ be an
operator with $H^{1}(\bigcup\mathcal{T}_{h}) :=\{v\in L^{2}(\Omega)
: \forall T\in\mathcal{T}_{h}, v|_{T}\in H^{1}(T)\}$.

Throughout the paper, the local versions of the differential
operators ${\rm div},\nabla, {\rm curl}$ are understood in the
distribution sense, i.e., in $D'(\Omega)$, namely, ${\rm
div}_{h},{\rm curl}_{h}:H^{1}(\bigcup\mathcal{T}_{h})^{2}\rightarrow
L^{2}(\Omega)$ and
$\nabla_{h}:H^{1}(\bigcup\mathcal{T}_{h})\rightarrow
L^{2}(\Omega)^{2}$ are defined such that, e.g., ${\rm div}_{h}v|_{T}
:={\rm div}(v|_{T})\ {\rm in}\ D'(T)$, for all $\ T\
\in\mathcal{T}_{h}$.

\subsection{A posteriori error estimators}
For all $E\in\varepsilon_{h}$, let $\tau$ be the unit tangential
vector along $E$, and $(p_{h},u_{h})\in M_{h}\times L_{h}$ be the
solution of (\ref{quasi-optimality 2.3}) with respect to the
triangulation $\mathcal{T}_{h}$. Then the local estimator is defined
by (see \cite{Carstensen})
\begin{equation*}
\begin{array}{lll}
\eta_{T,\kappa}^{2} :&=&||h^{\kappa}(f+{\rm div}\
p_{h})||_{L^{2}(T)}^{2}+h_{T}^{2}||{\rm
curl}(A^{-1}p_{h})||_{L^{2}(T)}^{2}\vspace{2mm}\\
&+&||h^{1/2}J(A^{-1}p_{h}\cdot\tau)||_{L^{2}(\partial T)}^{2}+
||h(A^{-1}p_{h}-\nabla_{h}u_{h})||_{L^{2}(T)}^{2}
\end{array}
\end{equation*}
with  $0\leq\kappa\leq1$ and the global estimator is given as
\begin{equation*}
\eta_{h,\kappa}^{2}
:=\sum\limits_{T\in\mathcal{T}_{h}}\eta_{T,\kappa}^{2}.
\end{equation*}
Here $ {\rm curl} \psi:=\frac{\partial\psi_{2}}{\partial
x_{1}}-\frac{\partial\psi_{1}}{\partial x_{2}} $ for
$\psi=(\psi_{1},\psi_{2})^{T}$.
For convenience we also define the stress variable error in weighted norm
$$\mathcal{E}_{h}^{2}
:=||A^{-1/2}(p-p_{h})||_{L^{2}(\Omega)}^{2}+||h{\rm
div}(p-p_{h})||_{L^{2}(\Omega)}^{2}.$$

Note that in this paper, the Curls of a scalar function $\phi$ are involved as
\begin{equation*}
{\rm Curl} \phi:=(-\frac{\partial\phi}{\partial
x_{2}},\frac{\partial\phi}{\partial x_{1}})^{T}.
\end{equation*}

In \cite{Carstensen0}, reliability of the a posteriori error estimator
$\eta_{h,\kappa}$ with estimates of the stress and displacement
variables in weighted norm was obtained  under a weak regularity assumption on $A$ (see Section 4.2 in
\cite{Carstensen0}),  whereas efficiency of $\eta_{h,\kappa}$ was derived by assuming additionally that   $A^{-1}p_{h}$ is a piecewise polynomial vector. In Section 3, we shall prove the efficiency without this additional
assumption on the coefficient matrix.  But
 we pay the price to introduce  oscillation of data.

In many applications the stress variable is of interest. We  define the local estimator for the stress variable error  as
\begin{equation*}
\begin{array}{lll}
\eta_{\mathcal{T}_{h}}^{2}(p_{h},T)
:&=&h_{T}^{2}||f-f_{h}||_{L^{2}(T)}^{2}+h_{T}^{2}||{\rm
curl}(A^{-1}p_{h})||_{L^{2}(T)}^{2}\vspace{2mm}\\
&+&h_{T}||J(A^{-1}p_{h}\cdot\tau)||_{L^{2}(\partial T)}^{2},
\end{array}
\end{equation*}
and define the global error estimator as
\begin{equation*}
\eta_{\mathcal{T}_{h}}^{2}(p_{h},\mathcal{T}_{h})
:=\sum\limits_{T\in\mathcal{T}_{h}}\eta_{\mathcal{T}_{h}}^{2}(p_{h},T).
\end{equation*}
We assume that, for a given triangulation $\mathcal{T}_{h}$ and a
pair of corresponding discrete solutions $(p_{h},u_{h})\in
M_{h}\times L_{h}$, the module $ESTIMATE$ for the stress variable
outputs the indicators
\begin{equation*}
\{\eta_{\mathcal{T}_{h}}^{2}(p_{h},T)\}_{T\in\mathcal{T}_{h}}=ESTIMATE(p_{h},\mathcal{T}_{h}).
\end{equation*}

Then the estimates of the stress variable error $\epsilon$ in a
weighted norm are reduced to (see \cite{Carstensen0})
\begin{equation}\label{Section 2.4}
||A^{-1/2}\epsilon||_{L^{2}(\Omega)}^{2}+||h{\rm div}\
\epsilon||_{L^{2}(\Omega)}^{2}\leq
C_{1}\eta_{\mathcal{T}_{h}}^{2}(p_{h},\mathcal{T}_{h}),
\end{equation}
where $C_{1}$ is a constant independent of the mesh size. In Section
3, we shall show  efficiency of the estimator
$\eta_{\mathcal{T}_{h}}(p_{h},\mathcal{T}_{h})$ for the stress
variable error in a weighted norm.

\subsection{Oscillation of data}
For an integer $n\geq l+1$, we denote by $\Pi_{n}^{2}$ the
$L^{2}-$best approximation operator onto the set of piecewise
polynomials of degree $\leq n$ over $T\in\mathcal{T}_{h}$ or
$E\in\varepsilon_{h}$,   denote by $id$ the identity operator, and
set $P_{n}^{2} :={\rm id}-\Pi_{n}^{2}$. We define  oscillation
$\widetilde{{\rm osc}}_{h}$ of   data as:
\begin{equation*}
\begin{array}{lll}
\widetilde{{\rm osc}}_{h}^{2} :&=&||hP_{n}^{2}{\rm
curl}(A^{-1}p_{h})||_{L^{2}(\Omega)}^{2}+
||h^{1/2}P_{n+1}^{2}J(A^{-1}p_{h}\cdot\tau)||_{L^{2}(\Gamma_{h})}^{2}\vspace{2mm}\\
&+& ||hP_{n}^{2}(A^{-1}p_{h}-\nabla_{h}u_{h})||_{L^{2}(\Omega)}^{2}.
\end{array}
\end{equation*}

For the stress variable error in a weighted norm, convergence and
quasi optimality of the AMFEM are involved in the oscillations of
the data including the right-hand side term $f$. Then we define the
oscillation of data as
\begin{equation*}
\begin{array}{lll}
{\rm osc}_{\mathcal{T}_{h}}^{2}(p_{h},T)
:&=&h_{T}^{2}||P_{n}^{2}{\rm curl}(A^{-1}p_{h})||_{L^{2}(T)}^{2}+
h_{T}||P_{n+1}^{2}J(A^{-1}p_{h}\cdot\tau)||_{L^{2}(\partial T)}^{2}\vspace{2mm}\\
&+& ||h(f-f_{h})||_{L^{2}(T)}^{2}\ \ \ {\rm for\ all}\
T\in\mathcal{T}_{h}.
\end{array}
\end{equation*}
Finally, for any subset $\mathcal{T}_{h}'\subset\mathcal{T}_{h}$, we
set
\begin{equation*}
{\rm osc}_{\mathcal{T}_{h}}^{2}(p_{h},\mathcal{T}_{h}')
:=\sum\limits_{T\in\mathcal{T}_{h}'}{\rm
osc}_{\mathcal{T}_{h}}^{2}(p_{h},T)\ \ {\rm and}\ \ {\rm
osc}_{h}^{2} :={\rm
osc}_{\mathcal{T}_{h}}^{2}(p_{h},\mathcal{T}_{h}).
\end{equation*}
We also define oscillation of $f$ as
\begin{equation*}
{\rm osc}^{2}(f,\mathcal{T}_{h})
:=||h(f-f_{h})||_{L^{2}(\Omega)}^{2}.
\end{equation*}

\begin{remark}\label{rem 2.1}
 For the estimator of the stress
variables, let $\mathcal{T}_{h}$ be a triangulation, $q_{h}\in
M_{h}$ be given. By substituting $p_{h}$ with $q_{h}$ in the
definitions of $\eta_{\mathcal{T}_{h}}(p_{h},T)$ and ${\rm
osc}_{\mathcal{T}_{h}}(p_{h},T)$, we can see that the indicator
$\eta_{\mathcal{T}_{h}}(q_{h},T)$ controls oscillation ${\rm
osc}_{\mathcal{T}_{h}}(q_{h},T)$, i.e., ${\rm
osc}_{\mathcal{T}_{h}}(q_{h},T)\leq\eta_{\mathcal{T}_{h}}(q_{h},T)$
for all $T\in\mathcal{T}_{h}$. In addition, for the stress
variables, the definitions of the error indicator and oscillation
are fully localized to $T$, which means there holds
$\eta_{\mathcal{T}_{H}}(q_{H},T)=\eta_{\mathcal{T}_{h}}(q_{H},T)$
and ${\rm osc}_{\mathcal{T}_{H}}(q_{H},T)={\rm
osc}_{\mathcal{T}_{h}}(q_{H},T)$ for any refinement
$\mathcal{T}_{h}$ of $\mathcal{T}_{H}$ with
$T\in\mathcal{T}_{h}\cap\mathcal{T}_{H}$ and $q_{H}\in M_{H}$.
Moreover, a combination of the monotonicity of local mesh sizes and
properties of the local $L^{2}-$projection yields
\begin{equation*}
\eta_{\mathcal{T}_{h}}(q_{H},\mathcal{T}_{h})\leq
\eta_{\mathcal{T}_{H}}(q_{H},\mathcal{T}_{H})\ \ {\rm and}\ \ {\rm
osc}_{\mathcal{T}_{h}}(q_{H},\mathcal{T}_{h})\leq{\rm
osc}_{\mathcal{T}_{H}}(q_{H},\mathcal{T}_{H}) \ \ \ \forall q_{H}\in
M_{H}.
\end{equation*}
\end{remark}

We note that in this paper, the triangulation $\mathcal{T}_{h}$
means a refinement of $\mathcal{T}_{H}$, all notations with respect
to the mesh $\mathcal{T}_{H}$ are defined similarly. Throughout the
rest of the paper we use the notation $A\lesssim B$ to represent
$A\leq CB$ with a mesh-size independent, generic constant $C>0$.
Moreover,  $A\approx B$ abbreviates $A\lesssim B\lesssim A$.

\subsection{The module MARK}
By relying on D\"{o}rfler marking, while only concerning the stress
variable error in a weighted norm, we select the elements to mark
according to the indicators for the stress variables, namely, given
a grid $\mathcal{T}_{H}$ with the set of indicators
$\{\eta_{\mathcal{T}_{H}}(p_{H},T)\}_{T\in\mathcal{T}_{H}}$ and
marking parameter $\theta\in(0,1]$, the module $MARK$ outputs a
subset of making elements $\mathcal{M}_{H}\subset\mathcal{T}_{H}$,
i.e.,
\begin{equation*}
\mathcal{M}_{H}=MARK(\{\eta_{\mathcal{T}_{H}}(p_{H},T)\}_{T\in
\mathcal{T}_{H}},\mathcal{T}_{H},\theta),
\end{equation*}
such that $\mathcal{M}_{H}$ satisfies D\"{o}rfler property
\begin{equation}\label{Dorfler property}
\eta_{\mathcal{T}_{H}}(p_{H},\mathcal{M}_{H})\geq
\theta\eta_{\mathcal{T}_{H}}(p_{H},\mathcal{T}_{H}).
\end{equation}
\subsection{The module REFINE}
In the $REFINE$ step, we suppose that the refinement rule, such as
the longest edge bisection \cite{Rivara1,Rivara2} and newest vertex
bisection \cite{Sewell,Mitchell1,Mitchell2}, is guaranteed to
produce conforming and shape regular mesh. Given a fixed integer
$b\geq1$, a mesh $\mathcal{T}_{H}$, and a subset
$\mathcal{M}_{H}\subset\mathcal{T}_{H}$ of marked elements, a
conforming triangulation $\mathcal{T}_{h}$ is output by
\begin{equation*}
\mathcal{T}_{h}=REFINE(\mathcal{T}_{H},\mathcal{M}_{H}),
\end{equation*}
where all elements of $\mathcal{M}_{H}$ are at least bisected $b$
times. Note that not only marked elements get refined but also
additional elements are refined to recovery the conformity of
triangulations. Let $\mathcal{R}
:=\mathcal{R}_{\mathcal{T}_{H}\rightarrow\mathcal{T}_{h}} :=
\mathcal{T}_{H}/(\mathcal{T}_{H}\cap\mathcal{T}_{h})$ denote the set
of refined elements, which means
$\mathcal{M}_{H}\subset\mathcal{R}_{\mathcal{T}_{H}\rightarrow\mathcal{T}_{h}}$.

In general, the number of these additionally refined elements is not
controlled by $\#\mathcal{M}_{H}$, that is to say,
$\#\mathcal{T}_{h}-\#\mathcal{T}_{H}$ cannot be bounded by
$C\mathcal{M}_{H}$ with a positive constant $C$, which is
independent of $\mathcal{T}_{H}$ and may depend on the refinement
level. On the other hand, by arguing with the entire sequence
$\{\mathcal{T}_{k}\}_{k\geq0}$ of refinement, Binev, Dahman, and
DeVore showed in two dimensions that the cumulative number of
elements added by insuring conformity does not inflate the total
number of marked elements \cite{Binev;Dahmen;DeVore}. Stevenson
generalized this result to higher dimensions \cite{Stevenson1}.

\begin{lemma}\label{lem 2.2}(\cite{Stevenson1}) \ (Complexity of $REFINE$).
 Assume that $\mathcal{T}_{0}$ verifies condition (b) of section
4 in \cite{Stevenson1}. Let $\{\mathcal{T}_{k}\}_{k\geq0}$ be any
conforming triangulation sequence refined from a shape regular
triangulation $\mathcal{T}_{0}$, where $\mathcal{T}_{k+1}$ is
generated from $\mathcal{T}_{k}$ by
$\mathcal{T}_{k+1}=REFINE(\mathcal{T}_{k},\mathcal{M}_{k})$ with a
subset $\mathcal{M}_{k}\subset\mathcal{T}_{k}$. Then there exists a
constant $C_{0}$ solely depending on $\mathcal{T}_{0}$ and $b$ such
that
\begin{equation*}
\#\mathcal{T}_{k}-\#\mathcal{T}_{0}\leq
C_{0}\sum\limits_{j=0}^{k-1}\#\mathcal{M}_{j}\ \ \ {\rm for\ all}\
k\geq1.
\end{equation*}
\end{lemma}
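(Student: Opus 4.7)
The plan is to follow the amortized analysis of Binev, Dahmen and DeVore (as extended by Stevenson to higher dimensions). First I would organize all triangles that ever appear in $\bigcup_{k\geq 0}\mathcal{T}_{k}$ into a forest rooted at $\mathcal{T}_{0}$, where the children of an element $T$ are the two triangles produced when $T$ is bisected. Assign to each node a generation $g(T)$ equal to the number of bisections needed to reach it from its root, so that $|T|\approx 2^{-g(T)}|T_{0}|$ for its root ancestor $T_{0}$. The condition (b) on $\mathcal{T}_{0}$ in Stevenson's paper provides a compatible labeling of refinement edges, guaranteeing that whenever an element is bisected by newest-vertex (or equivalent) rule, the neighbor sharing the refinement edge has the same generation and the two can be bisected simultaneously without triggering a new cascade outside their common patch.

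Next I would decompose, for each step $k\to k+1$, the set $\mathcal{R}_{k}:=\mathcal{T}_{k}\setminus\mathcal{T}_{k+1}$ into elements descended from $\mathcal{M}_{k}$ by the required $b$ bisections and extra elements refined purely to restore conformity. Since $b$ is fixed, the descendants of each marked element contribute at most a constant (depending on $b$) to $\#\mathcal{R}_{k}$, so the delicate part is the conformity closure. The standard device is to assign to each conformity-refined element a chain of neighbors leading back to some marked triangle whose bisection triggered the cascade; along the chain the generation strictly decreases. Because shape-regularity caps the valence of each vertex and condition (b) forces compatible edges across neighbors, each marked element can lie at the end of only a bounded number of such chains, and each chain has bounded length.

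The hard step is converting this per-step accounting into a global one: the same cascades interact across many values of $k$, so a naive iteration of the per-step bound would yield a constant depending on $k$ rather than only on $\mathcal{T}_{0}$ and $b$. Stevenson's resolution is to carry out the charging scheme once on the entire refinement forest instead of step by step. Each newly created node receives exactly one token from its generating marked ancestor; the total number of tokens issued is $\sum_{j=0}^{k-1}\#\mathcal{M}_{j}$, and the fact that generations are monotone along chains shows that no node accumulates more than $C_{0}/2$ of them. Summing yields
\begin{equation*}
\#\mathcal{T}_{k}-\#\mathcal{T}_{0}\leq C_{0}\sum_{j=0}^{k-1}\#\mathcal{M}_{j},
\end{equation*}
with $C_{0}$ depending only on the shape-regularity constant of $\mathcal{T}_{0}$, on the initial labeling encoded in condition (b), and on $b$. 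Making the global bookkeeping rigorous, in particular verifying that the compatible labeling propagates to every descendant and that every forced bisection admits a unique ancestor token, is the main technical obstacle and is precisely what requires the nontrivial arguments of Binev--Dahmen--DeVore and of Stevenson.
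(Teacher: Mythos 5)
The paper does not prove Lemma 2.2 at all; it is quoted verbatim from Stevenson's 2008 paper (and, for $d=2$, Binev--Dahmen--DeVore 2004), so there is no in-paper argument for your sketch to be compared against. What you are offering is a summary of Stevenson's proof, and the high-level ingredients you list — organize all triangles ever created into the refinement forest, use the compatible-divisibility condition (b) on $\mathcal{T}_0$ to control conformity cascades, exploit monotone decay of generation along chains and bounded valence, and run a global amortized charging scheme rather than a per-step one — are indeed the right ones.

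However, your description of the charging scheme itself is not correct. You write that ``each newly created node receives exactly one token from its generating marked ancestor'' and that the total number of tokens issued equals $\sum_{j}\#\mathcal{M}_{j}$. If each created element received (at least) one token and only $\sum_{j}\#\mathcal{M}_{j}$ tokens exist, then $\#\mathcal{T}_{k}-\#\mathcal{T}_{0}\le\sum_{j}\#\mathcal{M}_{j}$ immediately, i.e.\ $C_0=1$; but this is false in general (conformity closures can create many more elements than were marked), which is exactly why the lemma is nontrivial. Moreover a forced element does not have a single ``generating marked ancestor'': it may be reachable by cascades originating from many marked elements over many iterations, and conversely one marked element can trigger forced refinements propagating arbitrarily far through the mesh. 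The actual BDV--Stevenson mechanism is a fractional bipartite charging: one defines a weight $\lambda(T,T')$ distributing charge from each newly created $T$ to \emph{many} marked elements $T'$, with the weight decaying geometrically in both the generation gap $g(T)-g(T')$ and the spatial distance ${\rm dist}(T,T')/|T'|^{1/d}$, and then one proves two separate bounds: (i) every created element distributes total charge bounded below by a positive constant (because a conformity chain leading to $T$ always terminates at some marked element of comparable or coarser generation nearby), and (ii) every marked element receives total charge bounded above by a constant (summing the geometric series over generations and over dyadic annuli). Dividing (ii) by (i) yields $C_0$. Without this two-sided fractional accounting your argument does not close, so as written the proof has a genuine gap at precisely the step you flagged as the main obstacle.
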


\subsection{Adaptive algorithm}
We now collect the modules described in the previous sections to
obtain the AMFEM of the stress variables. In doing this, we replace
the subscript $H$ (or $h$) by an iteration counter called $k\geq0$.
Let $\mathcal{T}_{0}$ be a shape regular triangulation, $\eta_{0}
:=\eta_{\mathcal{T}_{0}}(p_{0,\mathcal{T}_{0}})$ denote the error
indicator onto the initial mesh $\mathcal{T}_{0}$, with a right hand
side $f\in L^{2}(\Omega)$, a tolerance $\varepsilon$, and a
parameter $\theta\in(0,1]$. The basic loop of the AMFEM is then
given by the following iterations:\\
\begin{center}{\hbox{\hspace{37mm}\ \ \ \ \ \ \
The algorithm for AMFEM}} \scriptsize
\begin{tabular}{|c|} \hline
$[\mathcal{T}_{N},(p_{N},u_{N})]=AMFEM(\mathcal{T}_{0},f,\varepsilon,\theta)$\\
set $k =0,\eta_{k}=\eta_{0}$ and iterate\\
WHILE $\eta_{k}\geq\varepsilon$\ DO\\
\hspace{-24mm}(1) $(p_{k},u_{k})=SOLVE(\mathcal{T}_{k})$; \\
\hspace{-1mm}(2)
$\{\eta_{k}(p_{k},T)\}_{T\in\mathcal{T}_{k}}=ESTIMATE(p_{k},\mathcal{T}_{k})$;\\
\hspace{-4mm}(3)
$\mathcal{M}_{k}=MARK(\{\eta_{k}(p_{k},T)\}_{T\in\mathcal{T}_{k}},\mathcal{T}_{k},\theta)$;\\
\hspace{-3mm}(4)
$\mathcal{T}_{k+1}=REFINE(\mathcal{T}_{k},\mathcal{M}_{k})$;
$k=k+1$.\\
END WHILe\\
$\mathcal{T}_{N}=\mathcal{T}_{k}$. \\
END AMFEM\\ \hline
\end{tabular}
\end{center}

We note that the AMFEM for the stress variables is a standard
algorithm in which it employs only the error estimator
$\{\eta_{\mathcal{T}_{k}}(p_{k},T)\}_{T\in\mathcal{T}_{k}}$, does
not use the oscillation indicators $\{{\rm
osc}_{\mathcal{T}_{k}}(p_{k},T)\}_{T\in\mathcal{T}_{k}}$, and does
not need the interior node property for marked elements.

\setcounter{lemma}{0} \setcounter{theorem}{0}
\setcounter{section}{2} \setcounter{equation}{0}
\section{Analysis of efficiency for estimators}
We devote this section to the analysis of efficiency of the a
posteriori error estimator $\eta_{h,\kappa}$ for the stress and
displacement variables in a weighted norm. Herein, We avoid the
additional assumption that $A^{-1}p_{h}$ is a polynomial vector on each
element, which is necessary for the proof of the efficiency of
$\eta_{h,\kappa}$ in \cite{Carstensen0} for the RT, BDM, and BDFM
elements.

\begin{lemma}\label{lem 3.1}  Let $(p_{h},u_{h})\in M_{h}\times L_{h}$
be a pair of discrete solutions of (\ref{quasi-optimality 2.3}),
$P_{n}^{2}$ denote the operator defined in Section 2.5. Then, for
all $T\in\mathcal{T}_{h}$, it holds
\begin{equation}\label{Lemma 3.1}
h_{T}||{\rm
curl}(A^{-1}p_{h})||_{L^{2}(T)}\lesssim||A^{-1/2}\epsilon||_{L^{2}(T)}+
h_{T}||P_{n}^{2}{\rm curl}(A^{-1}p_{h})||_{L^{2}(T)}.
\end{equation}
\end{lemma}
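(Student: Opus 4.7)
The target inequality is the standard Verfürth-type efficiency bound for the element curl residual, but \emph{without} assuming $A^{-1}p_h$ is piecewise polynomial. The starting observation is that since $p = A\nabla u$, we have $\mathrm{curl}(A^{-1}p) = \mathrm{curl}(\nabla u) = 0$, so $\mathrm{curl}(A^{-1}p_h) = -\mathrm{curl}(A^{-1}\epsilon)$. The plan is the classical bubble function argument of Verf\"urth, adjusted to cope with the fact that $\mathrm{curl}(A^{-1}p_h)$ is not itself a polynomial: I apply the bubble trick to its $L^2$-projection $\phi := \Pi_n^2\,\mathrm{curl}(A^{-1}p_h)$, and absorb the polynomial-degree defect into the oscillation term $h_T\|P_n^2\,\mathrm{curl}(A^{-1}p_h)\|_{L^2(T)}$.

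Concretely, I would let $b_T$ be the cubic (or otherwise standard) element bubble function on $T$ vanishing on $\partial T$ with $0 \le b_T \le 1$, and set $w := b_T \phi$, extended by zero outside $T$, so $w \in H_0^1(T)$. The well-known bubble norm equivalence on polynomials of bounded degree gives
\begin{equation*}
\|\phi\|_{L^2(T)}^2 \;\lesssim\; (b_T\phi,\phi)_T, \qquad \|b_T\phi\|_{L^2(T)} \;\lesssim\; \|\phi\|_{L^2(T)}, \qquad \|\nabla(b_T\phi)\|_{L^2(T)} \;\lesssim\; h_T^{-1}\|\phi\|_{L^2(T)},
\end{equation*}
where the last is the inverse estimate. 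I then split
\begin{equation*}
(b_T\phi,\phi)_T \;=\; (b_T\phi,\mathrm{curl}(A^{-1}p_h))_T \;-\; (b_T\phi, P_n^2\,\mathrm{curl}(A^{-1}p_h))_T.
\end{equation*}

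For the first term, I use $\mathrm{curl}(A^{-1}p_h) = -\mathrm{curl}(A^{-1}\epsilon)$ and integrate by parts (no boundary contribution because $w$ vanishes on $\partial T$), obtaining
\begin{equation*}
(b_T\phi,\mathrm{curl}(A^{-1}p_h))_T \;=\; -(A^{-1}\epsilon, \mathrm{Curl}(b_T\phi))_T.
\end{equation*}
Writing $A^{-1} = A^{-1/2}A^{-1/2}$ (symmetric square root), Cauchy--Schwarz plus uniform boundedness of $A^{-1/2}$ and the inverse inequality above yield
\begin{equation*}
|(b_T\phi,\mathrm{curl}(A^{-1}p_h))_T| \;\lesssim\; \|A^{-1/2}\epsilon\|_{L^2(T)}\, h_T^{-1}\|\phi\|_{L^2(T)}.
\end{equation*}
For the second term, plain Cauchy--Schwarz and $\|b_T\phi\|_{L^2(T)} \lesssim \|\phi\|_{L^2(T)}$ give a bound of $\|P_n^2\,\mathrm{curl}(A^{-1}p_h)\|_{L^2(T)}\,\|\phi\|_{L^2(T)}$.

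Combining these two bounds with the norm equivalence and dividing by $\|\phi\|_{L^2(T)}$ produces
\begin{equation*}
\|\phi\|_{L^2(T)} \;\lesssim\; h_T^{-1}\|A^{-1/2}\epsilon\|_{L^2(T)} \;+\; \|P_n^2\,\mathrm{curl}(A^{-1}p_h)\|_{L^2(T)}.
\end{equation*}
Since $\mathrm{curl}(A^{-1}p_h) = \phi + P_n^2\,\mathrm{curl}(A^{-1}p_h)$, the triangle inequality and multiplication by $h_T$ deliver (\ref{Lemma 3.1}). The only place where the absence of the ``$A^{-1}p_h$ polynomial'' assumption actually bites is the need to work with $\phi$ instead of $\mathrm{curl}(A^{-1}p_h)$ directly; this is precisely the step whose cost is the extra oscillation-type term on the right-hand side, so the main conceptual obstacle is really just bookkeeping the $\Pi_n^2$/$P_n^2$ splitting cleanly. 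The constants hidden in $\lesssim$ depend only on shape regularity, on the polynomial degree bound $n$, and on the uniform ellipticity constants of $A$.
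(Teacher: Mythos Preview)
Your argument is correct and is essentially identical to the paper's own proof: both apply the Verf\"urth bubble-function technique to the polynomial projection $\Pi_n^2\,\mathrm{curl}(A^{-1}p_h)$, split off the $P_n^2$ remainder, use $\mathrm{curl}(A^{-1}p_h)=-\mathrm{curl}(A^{-1}\epsilon)$ together with integration by parts, and finish with the inverse estimate. The only cosmetic difference is that the paper opens with the triangle inequality on $\|\mathrm{curl}(A^{-1}p_h)\|_{L^2(T)}^2$ whereas you close with it.
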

\begin{proof} From the triangle inequality, we have
\begin{equation}\label{Lemma 3.1.1}
||{\rm curl}(A^{-1}p_{h})||_{L^{2}(T)}^{2}\leq2(||P_{n}^{2}{\rm
curl}(A^{-1}p_{h})||_{L^{2}(T)}^{2}+||\Pi_{n}^{2}{\rm
curl}(A^{-1}p_{h})||_{L^{2}(T)}^{2}).
\end{equation}
For all $T\in\mathcal{T}_{h}$, let $\psi_{T}$ denote the bubble
function on $T$ with zero boundary values on $T$ and
$0\leq\psi_{T}\leq1$, the equivalence of norms
$||\psi_{T}^{1/2}\cdot||_{L^{2}(T)}$ and $||\cdot||_{L^{2}(T)}$ for
polynomials implies
\begin{equation}\label{Lemma 3.1.2}
\begin{array}{lll}
||\Pi_{n}^{2}{\rm
curl}(A^{-1}p_{h})||_{L^{2}(T)}^{2}&\approx&||\psi_{T}^{1/2}\Pi_{n}^{2}{\rm
curl}(A^{-1}p_{h})||_{L^{2}(T)}^{2}\vspace{2mm}\\
&=&(\psi_{T}\Pi_{n}^{2}{\rm curl}(A^{-1}p_{h}),{\rm
curl}(A^{-1}p_{h}))_{0,T}\vspace{2mm}\\
&\ & -(\psi_{T}\Pi_{n}^{2}{\rm curl}(A^{-1}p_{h}),P_{n}^{2}{\rm
curl}(A^{-1}p_{h}))_{0,T}.
\end{array}
\end{equation}

From $\epsilon :=p-p_{h}$, $p :=A\nabla u$, Stokes theory, and
integration by parts, we have
\begin{equation}\label{Lemma 3.1.3}
\begin{array}{lll}
&\ &\displaystyle\int_{T}\psi_{T}\Pi_{n}^{2}{\rm
curl}(A^{-1}p_{h})\cdot{\rm
curl}(A^{-1}p_{h})\\
&=&\displaystyle\int_{T}\psi_{T}\Pi_{n}^{2}{\rm
curl}(A^{-1}p_{h})\cdot{\rm curl}(\nabla
u-A^{-1}\epsilon)\vspace{2mm}\\
&=&\displaystyle-\int_{T}\psi_{T}\Pi_{n}^{2}{\rm
curl}(A^{-1}p_{h})\cdot{\rm
curl}(A^{-1}\epsilon)\vspace{2mm}\\
&=&\displaystyle\int_{T}{\rm Curl}(\psi_{T}\Pi_{n}^{2}{\rm
curl}(A^{-1}p_{h}))\cdot(A^{-1}\epsilon)\vspace{2mm}\\
&\leq&\displaystyle|\psi_{T}\Pi_{n}^{2}{\rm
curl}(A^{-1}p_{h})|_{H^{1}(T)}||A^{-1}\epsilon||_{L^{2}(T)}.
\end{array}
\end{equation}

A combination of (\ref{Lemma 3.1.1})-(\ref{Lemma 3.1.3}), together
with an inverse estimation and the property of $L^{2}-$projection,
yields
\begin{equation*}
\begin{array}{lll}
||{\rm curl}(A^{-1}p_{h})||_{L^{2}(T)}^{2}&\lesssim&(||P_{n}^{2}{\rm
curl}(A^{-1}p_{h})||_{L^{2}(T)}+h_{T}^{-1}||A^{-1/2}\epsilon||_{L^{2}(T)})
\vspace{2mm}\\
&\ & \times||{\rm curl}(A^{-1}p_{h})||_{L^{2}(T)}.
\end{array}
\end{equation*}
This implies the desired result.
\end{proof}
\begin{lemma}\label{lem 3.2}  Let $(p_{h},u_{h})\in M_{h}\times L_{h}$
be the discrete solutions of (\ref{quasi-optimality 2.3}),
$P_{n+1}^{2}$ denote the operator defined in Section 2.5. Then, for
all $E\in\varepsilon_{h}$, it holds
\begin{equation}\label{Lemma 3.2}
\begin{array}{lll}
h_{E}^{1/2}||J(A^{-1}p_{h}\cdot\tau)||_{L^{2}(E)}&\lesssim&
h_{E}^{1/2}||P_{n+1}^{2}J(A^{-1}p_{h}\cdot\tau)||_{L^{2}(E)}\vspace{2mm}\\
&\ & +h_{E}||P_{n}^{2}{\rm
curl}_{h}(A^{-1}p_{h})||_{L^{2}(\omega_{E})}+||A^{-1/2}\epsilon||_{L^{2}(\omega_{E})}.
\end{array}
\end{equation}
\end{lemma}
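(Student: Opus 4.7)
\medskip

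\noindent\textbf{Proof proposal for Lemma \ref{lem 3.2}.}
The plan is to mimic the element-bubble argument of Lemma \ref{lem 3.1}, but now on an edge patch $\omega_E$ with an edge-bubble $\psi_E$. First I would apply the triangle inequality to split
$$
\|J(A^{-1}p_{h}\cdot\tau)\|_{L^{2}(E)}\le\|P_{n+1}^{2}J(A^{-1}p_{h}\cdot\tau)\|_{L^{2}(E)}+\|\Pi_{n+1}^{2}J(A^{-1}p_{h}\cdot\tau)\|_{L^{2}(E)},
$$
and reduce the proof to controlling the polynomial piece $\chi:=\Pi_{n+1}^{2}J(A^{-1}p_{h}\cdot\tau)$ on $E$ by the right-hand side of \eqref{Lemma 3.2}.

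Next I would use the norm equivalence $\|\chi\|_{L^{2}(E)}^{2}\approx\int_{E}\psi_{E}\chi^{2}\,ds$ for polynomials on $E$, extend $\chi$ off $E$ to a polynomial $\widetilde\chi$ on $\omega_{E}$ by a standard lift, and set $\phi:=\psi_{E}\widetilde\chi$. Then $\phi$ vanishes on $\partial\omega_{E}$, and writing $\int_{E}\psi_{E}\chi^{2}=\int_{E}\psi_{E}\chi\cdot J(A^{-1}p_{h}\cdot\tau)-\int_{E}\psi_{E}\chi\cdot P_{n+1}^{2}J(A^{-1}p_{h}\cdot\tau)$, I would apply the elementwise Green's identity
$$
\int_{T}\mathrm{curl}(A^{-1}p_{h})\,\phi\,dx+\int_{T}A^{-1}p_{h}\cdot\mathrm{Curl}(\phi)\,dx=\int_{\partial T}(A^{-1}p_{h}\cdot\tau)\,\phi\,ds
$$
on both triangles of $\omega_{E}$ and sum. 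Because $\phi\equiv0$ on $\partial\omega_{E}$, all boundary contributions collapse to the jump integral on $E$, yielding
$$
\int_{E}\psi_{E}\chi\,J(A^{-1}p_{h}\cdot\tau)\,ds=\sum_{T\subset\omega_{E}}\Bigl[\int_{T}\mathrm{curl}(A^{-1}p_{h})\,\phi\,dx+\int_{T}A^{-1}p_{h}\cdot\mathrm{Curl}(\phi)\,dx\Bigr].
$$

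The key identity now is that $A^{-1}p=\nabla u$ and $\mathrm{curl}(\nabla u)=0$ elementwise, so when one writes $A^{-1}p_{h}=\nabla u-A^{-1}\epsilon$, the $\nabla u$ contribution to $\sum_{T}\int_{T}A^{-1}p_{h}\cdot\mathrm{Curl}(\phi)$ vanishes by elementwise integration by parts combined with continuity of $\nabla u$ across $E$; what survives is $-\sum_{T}\int_{T}A^{-1}\epsilon\cdot\mathrm{Curl}(\phi)$. For this term I would apply Cauchy--Schwarz together with the inverse/scaling bounds $\|\mathrm{Curl}\,\phi\|_{L^{2}(T)}\lesssim h_{E}^{-1}\|\phi\|_{L^{2}(T)}$ and $\|\phi\|_{L^{2}(T)}\lesssim h_{E}^{1/2}\|\chi\|_{L^{2}(E)}$ coming from the bubble lift, producing a factor $h_{E}^{-1/2}\|A^{-1/2}\epsilon\|_{L^{2}(\omega_{E})}\|\chi\|_{L^{2}(E)}$. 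For the $\int_{T}\mathrm{curl}(A^{-1}p_{h})\,\phi$ term I would invoke Lemma \ref{lem 3.1} to replace $\|\mathrm{curl}(A^{-1}p_{h})\|_{L^{2}(T)}$ by $h_{T}^{-1}\|A^{-1/2}\epsilon\|_{L^{2}(T)}+\|P_{n}^{2}\mathrm{curl}(A^{-1}p_{h})\|_{L^{2}(T)}$, which after pairing with $\|\phi\|_{L^{2}(T)}\lesssim h_{E}^{1/2}\|\chi\|_{L^{2}(E)}$ yields the contributions $h_{E}^{-1/2}\|A^{-1/2}\epsilon\|_{L^{2}(\omega_{E})}\|\chi\|_{L^{2}(E)}$ and $h_{E}^{1/2}\|P_{n}^{2}\mathrm{curl}_{h}(A^{-1}p_{h})\|_{L^{2}(\omega_{E})}\|\chi\|_{L^{2}(E)}$.

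Combining these bounds with the trivial estimate $|\int_{E}\psi_{E}\chi\,P_{n+1}^{2}J(A^{-1}p_{h}\cdot\tau)|\le\|\chi\|_{L^{2}(E)}\|P_{n+1}^{2}J(A^{-1}p_{h}\cdot\tau)\|_{L^{2}(E)}$, dividing through by $\|\chi\|_{L^{2}(E)}$, and multiplying by $h_{E}^{1/2}$ would yield exactly \eqref{Lemma 3.2}. The main delicate point I anticipate is step three: setting up the extension $\widetilde\chi$ and the edge bubble $\phi=\psi_{E}\widetilde\chi$ with the correct scaling $\|\phi\|_{L^{2}(T)}\lesssim h_{E}^{1/2}\|\chi\|_{L^{2}(E)}$ and the inverse estimate $\|\mathrm{Curl}\,\phi\|_{L^{2}(T)}\lesssim h_{E}^{-1/2}\|\chi\|_{L^{2}(E)}$, and verifying carefully that the $\nabla u$ contribution actually cancels after summation over $\omega_{E}$ despite the local integration by parts producing nontrivial boundary terms on $E$ individually.
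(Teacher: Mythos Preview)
Your proposal is correct and follows essentially the same approach as the paper's proof: edge-bubble norm equivalence for the polynomial part $\chi=\Pi_{n+1}^{2}J(A^{-1}p_{h}\cdot\tau)$, extension of $\chi$ to $\omega_{E}$ (the paper writes this as an operator $P:C(E)\to C(\omega_{E})$ with $\|\psi_{E}P\chi\|_{L^{2}(\omega_{E})}\approx h_{E}^{1/2}\|\chi\|_{L^{2}(E)}$), conversion of the edge integral to volume integrals via elementwise integration by parts, cancellation of the $\nabla u$ contribution, and then Lemma~\ref{lem 3.1} plus inverse/scaling estimates to finish. The only cosmetic difference is that the paper squares in the initial triangle inequality, and justifies the vanishing of $\int_{\omega_{E}}\nabla u\cdot\mathrm{Curl}\,\phi$ directly via Stokes' theorem on $\omega_{E}$ (using $\phi|_{\partial\omega_{E}}=0$) rather than your elementwise argument---the Stokes route is cleaner since it avoids any trace issue for $\nabla u\in L^{2}$ on $E$.
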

\begin{proof} For all
$E\in\varepsilon_{h}$, let $\psi_{E}$ denote the bubble function on
$E$ with the support set $\omega_{E}$ and $0\leq\psi_{E}\leq1$. Put
$\sigma :=J(A^{-1}p_{h}\cdot\tau)$. since
$\Pi_{n+1}^{2}:L^{2}(E)\rightarrow P_{n+1}(E)$ is an
$L^{2}-$projection operator, where $P_{n+1}(E)$ is a set of
polynomials of total degree $\leq n+1$ over $E$, there exists an
extension operator $P:C(E)\rightarrow C(\omega_{E})$
\cite{Verfurth-1,Verfurth-2} such that
\begin{equation}\label{Lemma 3.2.1}
P\Pi_{n+1}^{2}\sigma|_{E}=\Pi_{n+1}^{2}\sigma\ \ \ {\rm and}\ \ \
||\psi_{E}P\Pi_{n+1}^{2}\sigma||_{L^{2}(\omega_{E})}\approx
h_{E}^{1/2}||\Pi_{n+1}^{2}\sigma||_{L^(E)}.
\end{equation}
From the triangle inequality, we obtain
\begin{equation}\label{Lemma 3.2.2}
||\sigma||_{L^{2}(E)}^{2}\leq2(||P_{n+1}^{2}\sigma||_{L^{2}(E)}^{2}+
||\Pi_{n+1}^{2}\sigma||_{L^{2}(E)}^{2}).
\end{equation}
The equivalence of norms $||\psi_{E}^{1/2}\cdot||_{L^{2}(E)}$ and
$||\cdot||_{L^{2}(E)}$ for polynomials implies
\begin{equation}\label{Lemma 3.2.3}
\begin{array}{lll}
||\Pi_{n+1}^{2}\sigma||_{L^{2}(E)}^{2}&\approx&||\psi_{E}^{1/2}
\Pi_{n+1}^{2}\sigma||_{L^{2}(E)}^{2}
=(\psi_{E}\Pi_{n+1}^{2}\sigma,\Pi_{n+1}^{2}\sigma)_{0,E}\vspace{2mm}\\
&=&(\psi_{E}\Pi_{n+1}^{2}\sigma,\sigma)_{0,E}-
(\psi_{E}\Pi_{n+1}^{2}\sigma,P_{n+1}^{2}\sigma)_{0,E}.
\end{array}
\end{equation}
From integration by parts, we get
\begin{equation}\label{Lemma 3.2.4}
\begin{array}{lll}
(\psi_{E}\Pi_{n+1}^{2}\sigma,\sigma)_{0,E}&=&\displaystyle\int_{E}
\psi_{E}P\Pi_{n+1}^{2}\sigma\cdot\sigma
=\displaystyle\int_{\omega_{E}}A^{-1}p_{h}\cdot{\rm
curl}(\psi_{E}P\Pi_{n+1}^{2}\sigma)\vspace{2mm}\\
&+&\displaystyle\int_{\omega_{E}}{\rm
curl}_{h}(A^{-1}p_{h})\psi_{E}P\Pi_{n+1}^{2}\sigma\vspace{2mm}\\
\end{array}
\end{equation}
From Stokes theory, and noticing that $A^{-1}p_{h}=\nabla
u-A^{-1}\epsilon$, we have
\begin{equation}\label{Lemma 3.2.5}
\displaystyle\int_{\omega_{E}}A^{-1}p_{h}\cdot{\rm
curl}(\psi_{E}P\Pi_{n+1}^{2}\sigma)=-\int_{\omega_{E}}A^{-1}\epsilon\cdot{\rm
curl}(\psi_{E}P\Pi_{n+1}^{2}\sigma).
\end{equation}

A combination of (\ref{Lemma 3.2.4}) and (\ref{Lemma 3.2.5}) yields
\begin{equation}\label{Lemma 3.2.6}
\begin{array}{lll}
(\psi_{E}\Pi_{n+1}^{2}\sigma,\sigma)_{0,E}&\leq&
||A^{-1}\epsilon||_{L^{2}(\omega_{E})}|\psi_{E}P\Pi_{n+1}^{2}
\sigma|_{H^{1}(\omega_{E})}\vspace{2mm}\\
&\ & +||{\rm
curl}_{h}(A^{-1}p_{h})||_{L^{2}(\omega_{E})}||\psi_{E}P\Pi_{n+1}^{2}
\sigma||_{L^{2}(\omega_{E})}\vspace{2mm}\\
\end{array}
\end{equation}
This inequality, together with an inverse estimate and (\ref{Lemma 3.2.1}), yields
\begin{equation}\label{Lemma 3.2.7}
\begin{array}{lll}
(\psi_{E}\Pi_{n+1}^{2}\sigma,\sigma)_{0,E}&\lesssim&
(||A^{-1}\epsilon||_{L^{2}(\omega_{E})}+
h_{E}||{\rm curl}_{h}(A^{-1}p_{h})||_{L^{2}(\omega_{E})})\vspace{2mm}\\
&\ & \times h_{E}^{-1/2}||\Pi_{n+1}^{2}\sigma||_{L^{2}(E)}.
\end{array}
\end{equation}
We apply Lemma 
\ref{lem 3.1}   to the above inequality (\ref{Lemma 3.2.7}) and use
  the property of $L^{2}-$projection to get
\begin{equation}\label{Lemma 3.2.8}
(\psi_{E}\Pi_{n+1}^{2}\sigma,\sigma)_{0,E}\lesssim(h_{E}||P_{n}^{2}{\rm
curl}_{h}(A^{-1}p_{h})||_{L^{2}(\omega_{E})}+
||A^{-\frac{1}{2}}\epsilon||_{L^{2}(\omega_{E})})h_{E}^{-\frac{1}{2}}||\sigma||_{L^{2}(E)}.
\end{equation}
From (\ref{Lemma 3.2.2}), (\ref{Lemma 3.2.3}), the property of
$L^{2}-$projection,  equivalence of norms
$||\psi_{E}^{1/2}\cdot||_{L^{2}(E)}$ and $||\cdot||_{L^{2}(E)}$ for
polynomials, and (\ref{Lemma 3.2.8}), we obtain
\begin{equation}\label{Lemma 3.2.9}
\begin{array}{lll}
h_{E}^{1/2}||\sigma||_{L^{2}(E)}^{2}&\lesssim&
(h_{E}^{1/2}||P_{n+1}^{2}\sigma||_{L^{2}(E)}+h_{E}||P_{n}^{2}{\rm
curl}_{h}(A^{-1}p_{h})||_{L^{2}(\omega_{E})}\vspace{2mm}\\
&\ &
+||A^{-1/2}\epsilon||_{L^{2}(\omega_{E})})||\sigma||_{L^{2}(E)}.
\end{array}
\end{equation}
The above inequality (\ref{Lemma 3.2.9}) implies the desired result
(\ref{Lemma 3.2}) by canceling one $||\sigma||_{L^{2}(E)}$.
\end{proof}

\begin{lemma}\label{lem 3.3}   Let $(p_{h},u_{h})\in M_{h}\times L_{h}$
be a pair of discrete solutions of (\ref{quasi-optimality 2.3}),
$P_{n}^{2}$ denote the operator defined in Section 2.5. Then, for
all $T\in\mathcal{T}_{h}$, it holds
\begin{equation}\label{Lemma 3.3}
\begin{array}{lll}
h_{T}||A^{-1}p_{h}-\nabla_{h}u_{h}||_{L^{2}(T)}&\lesssim&
h_{T}||P_{n}^{2}(A^{-1}p_{h}-\nabla_{h}u_{h})||_{L^{2}(T)}\vspace{2mm}\\
&\ & +||e||_{L^{2}(T)}+||A^{-1/2}\epsilon||_{L^{2}(T)}.
\end{array}
\end{equation}
\end{lemma}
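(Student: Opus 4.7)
The plan is to mimic the bubble–function machinery already used in Lemmas 3.1 and 3.2, but now localized to a single triangle $T$ (no edge patches), using the interior element bubble $\psi_{T}$.

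First I would split
$$A^{-1}p_{h}-\nabla_{h}u_{h}=\Pi_{n}^{2}(A^{-1}p_{h}-\nabla_{h}u_{h})+P_{n}^{2}(A^{-1}p_{h}-\nabla_{h}u_{h})$$
and use the triangle inequality, so that it suffices to bound the projected piece $\Pi_{n}^{2}(A^{-1}p_{h}-\nabla_{h}u_{h})$. Writing $v_{T}:=\Pi_{n}^{2}(A^{-1}p_{h}-\nabla_{h}u_{h})$ and using the norm equivalence $\|\psi_{T}^{1/2}\cdot\|_{L^{2}(T)}\approx\|\cdot\|_{L^{2}(T)}$ on polynomials (as in (\ref{Lemma 3.1.2})), I get
$$\|v_{T}\|_{L^{2}(T)}^{2}\approx(\psi_{T}v_{T},A^{-1}p_{h}-\nabla_{h}u_{h})_{0,T}-(\psi_{T}v_{T},P_{n}^{2}(A^{-1}p_{h}-\nabla_{h}u_{h}))_{0,T}.$$
The second term is immediately handled by Cauchy–Schwarz and yields the $\|P_{n}^{2}(\cdot)\|_{L^{2}(T)}$ contribution on the right-hand side of (\ref{Lemma 3.3}).

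The heart of the argument is the first term. Here I would use the identity $p=A\nabla u$, i.e., $A^{-1}p_{h}=\nabla u-A^{-1}\epsilon$ pointwise on $T$, so that on $T$
$$A^{-1}p_{h}-\nabla_{h}u_{h}=\nabla(u-u_{h})-A^{-1}\epsilon.$$
Since $\psi_{T}v_{T}$ is a polynomial vanishing on $\partial T$, an integration by parts kills the boundary contribution, giving
$$(\psi_{T}v_{T},\nabla(u-u_{h}))_{0,T}=-({\rm div}(\psi_{T}v_{T}),u-u_{h})_{0,T}\le\|{\rm div}(\psi_{T}v_{T})\|_{L^{2}(T)}\|e\|_{L^{2}(T)}.$$
An inverse estimate turns $\|{\rm div}(\psi_{T}v_{T})\|_{L^{2}(T)}$ into $h_{T}^{-1}\|v_{T}\|_{L^{2}(T)}$, and the remaining piece $(\psi_{T}v_{T},-A^{-1}\epsilon)_{0,T}$ is bounded by Cauchy–Schwarz in terms of $\|A^{-1}\epsilon\|_{L^{2}(T)}\|v_{T}\|_{L^{2}(T)}$, which is controlled by $\|A^{-1/2}\epsilon\|_{L^{2}(T)}$ using the uniform positive definiteness (boundedness of $A^{-1/2}$) of $A$.

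Cancelling one factor of $\|v_{T}\|_{L^{2}(T)}$, multiplying by $h_{T}$, and absorbing the harmless factor $h_{T}\lesssim 1$ on the $\|A^{-1/2}\epsilon\|_{L^{2}(T)}$ term yields the assertion. The only minor obstacle is being careful about that last absorption and about the identity $A^{-1}p_{h}-\nabla_{h}u_{h}=\nabla(u-u_{h})-A^{-1}\epsilon$, which is what converts the oscillation-type left-hand side into quantities linked to $e$ and $\epsilon$; everything else parallels the proofs of Lemmas \ref{lem 3.1} and \ref{lem 3.2}.
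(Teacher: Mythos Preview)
Your proposal is correct and follows essentially the same approach as the paper: introduce the element bubble $\psi_{T}$, use the polynomial norm equivalence on $\Pi_{n}^{2}\mu$ with $\mu:=A^{-1}p_{h}-\nabla_{h}u_{h}$, rewrite $\mu=\nabla_{h}e-A^{-1}\epsilon$, integrate by parts (the bubble kills the boundary term), apply an inverse estimate, and cancel one factor of the norm. The only cosmetic difference is that the paper starts from $\|\mu\|_{L^{2}(T)}^{2}\le 2(\|P_{n}^{2}\mu\|\,\|\mu\|+\|\Pi_{n}^{2}\mu\|^{2})$ rather than the plain triangle inequality, and it likewise silently uses $h_{T}\lesssim 1$ to drop the $h_{T}$ in front of $\|A^{-1/2}\epsilon\|_{L^{2}(T)}$.
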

\begin{proof} Denote $\mu :=A^{-1}p_{h}-\nabla_{h}u_{h}$, and let
$\psi_{T}$ be the bubble function defined in the proof of Lemma 
ef{lem 3.1}.
Since $\Pi_{n}^{2}$ is the $L^{2}-$best approximation operator onto the set
of polynomials of degree $\leq n$ over $T\in\mathcal{T}_{h}$, we have
\begin{equation}\label{Lemma 3.3.1}
||\mu||_{L^{2}(T)}^{2}\leq2(||P_{n}^{2}\mu||_{L^{2}(T)}||\mu||_{L^{2}(T)}
+||\Pi_{n}^{2}\mu||_{L^{2}(T)}^{2}).
\end{equation}

The property of the bubble function $\psi_{T}$ indicates
\begin{equation}\label{Lemma 3.3.2}
\begin{array}{lll}
||\Pi_{n}^{2}\mu||_{L^{2}(T)}^{2}&\approx&
||\psi_{T}^{1/2}\Pi_{n}^{2}\mu||_{L^{2}(T)}^{2}=
(\psi_{T}\Pi_{n}^{2}\mu,\Pi_{n}^{2}\mu)_{0,T}\vspace{2mm}\\
&=&(\psi_{T}\Pi_{n}^{2}\mu,\mu)_{0,T}-(\psi_{T}\Pi_{n}^{2}\mu,P_{n}^{2}\mu)_{0,T}.
\end{array}
\end{equation}
Since $\epsilon =p-p_{h}$, $e =u-u_{h}$, integration by parts and an
inverse estimate lead to
\begin{equation}\label{Lemma 3.3.3}
\begin{array}{lll}
\displaystyle\int_{T}\psi_{T}\Pi_{n}^{2}\mu\cdot\mu&=&\displaystyle\int_{T}(\nabla_{h}e-A^{-1}\epsilon)
\cdot\psi_{T}\Pi_{n}^{2}\mu\vspace{2mm}\\
&=&\displaystyle-\int_{T}{\rm div}(\psi_{T}\Pi_{n}^{2}\mu)e-
\int_{T}A^{-1}\epsilon\cdot\psi_{T}\Pi_{n}^{2}\mu\vspace{2mm}\\
&\leq&\displaystyle||e||_{L^{2}(T)}|\psi_{T}\Pi_{n}^{2}\mu|_{H^{1}(T)}+
||A^{-1}\epsilon||_{L^{2}(T)}||\psi_{T}\Pi_{n}^{2}\mu||_{L^{2}(T)}\vspace{2mm}\\
&\lesssim&\displaystyle(h_{T}^{-1}||e||_{L^{2}(T)}+
||A^{-1/2}\epsilon||_{L^{2}(T)})||\mu||_{L^{2}(T)}.
\end{array}
\end{equation}
A combination of (\ref{Lemma 3.3.1})-(\ref{Lemma 3.3.3}) yields
\begin{equation}\label{Lemma 3.3.4}
h_{T}||\mu||_{L^{2}(T)}^{2}\lesssim(h_{T}||P_{n}^{2}\mu||_{L^{2}(T)}+
||e||_{L^{2}(T)}+||A^{-1/2}\epsilon||_{L^{2}(T)})||\mu||_{L^{2}(T)}.
\end{equation}
The assertion (\ref{Lemma 3.3}) then follows from the above
inequality (\ref{Lemma 3.3.4}).
\end{proof}

We now prove efficiency of the estimator $\eta_{h,\kappa}$ by
using the above three lemmas.

\begin{theorem}\label{thm 3.4}  Let $(p,u)$ and $(p_{h},u_{h})\in M_{h}\times L_{h}$ be the
solutions of (\ref{quasi-optimality 2.1}) and (\ref{quasi-optimality
2.3}), respectively, and $\eta_{h,\kappa}$ and $\widetilde{{\rm
osc}}_{h}$ be  defined as in Section 2.4 and 2.5. Then, for the
estimator of the stress and displacement variables for the RT, BDM,
and BDFM elements, there exists a constant hidden in $\lesssim$,
independent of mesh-size, such that
\begin{equation}\label{Lemma 3.4}
\eta_{h,\kappa}^{2}\lesssim||A^{-1/2}(p-p_{h})||_{L^{2}(\Omega)}^{2}+
||h^{\kappa}{\rm
div}(p-p_{h})||_{L^{2}(\Omega)}^{2}+||u-u_{h}||_{L^{2}(\Omega)}^{2}+\widetilde{{\rm
osc}}_{h}^{2}.
\end{equation}
\end{theorem}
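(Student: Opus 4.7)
The plan is to split $\eta_{h,\kappa}^{2}$ into its four local contributions and control each by one of the tools already at our disposal: the continuous PDE identity $-\mathrm{div}\,p=f$ for the residual term, and Lemmas \ref{lem 3.1}, \ref{lem 3.2}, \ref{lem 3.3} for the three remaining terms. The work is then mostly assembly: square the local estimates, sum over $T\in\mathcal{T}_{h}$ (respectively $E\in\varepsilon_{h}$), and invoke finite overlap of the patches $\omega_{E}$ coming from shape regularity.

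First I would handle the volume residual. From $-\mathrm{div}\,p=f$ in $\Omega$ we get $f+\mathrm{div}\,p_{h}=-\mathrm{div}(p-p_{h})=-\mathrm{div}\,\epsilon$ pointwise, so
\begin{equation*}
\sum_{T\in\mathcal{T}_{h}}\|h^{\kappa}(f+\mathrm{div}\,p_{h})\|_{L^{2}(T)}^{2}
=\|h^{\kappa}\mathrm{div}(p-p_{h})\|_{L^{2}(\Omega)}^{2},
\end{equation*}
which is already present in the bound we want. Next, squaring Lemma \ref{lem 3.1} and summing over $T$ gives
\begin{equation*}
\sum_{T\in\mathcal{T}_{h}}h_{T}^{2}\|\mathrm{curl}(A^{-1}p_{h})\|_{L^{2}(T)}^{2}
\lesssim \|A^{-1/2}\epsilon\|_{L^{2}(\Omega)}^{2}+\|hP_{n}^{2}\mathrm{curl}(A^{-1}p_{h})\|_{L^{2}(\Omega)}^{2},
\end{equation*}
and squaring Lemma \ref{lem 3.3} and summing yields the analogous control of $\sum_{T}h_{T}^{2}\|A^{-1}p_{h}-\nabla_{h}u_{h}\|_{L^{2}(T)}^{2}$ by $\|hP_{n}^{2}(A^{-1}p_{h}-\nabla_{h}u_{h})\|_{L^{2}(\Omega)}^{2}+\|u-u_{h}\|_{L^{2}(\Omega)}^{2}+\|A^{-1/2}\epsilon\|_{L^{2}(\Omega)}^{2}$.

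The slightly more delicate step is the jump term. Converting the boundary contribution to a sum over edges gives $\sum_{T}\|h^{1/2}J(A^{-1}p_{h}\cdot\tau)\|_{L^{2}(\partial T)}^{2}\approx\sum_{E\in\varepsilon_{h}}h_{E}\|J(A^{-1}p_{h}\cdot\tau)\|_{L^{2}(E)}^{2}$; squaring Lemma \ref{lem 3.2} on each $E$ and summing produces the three desired global pieces $\|h^{1/2}P_{n+1}^{2}J(A^{-1}p_{h}\cdot\tau)\|_{L^{2}(\Gamma_{h})}^{2}$, $\|hP_{n}^{2}\mathrm{curl}_{h}(A^{-1}p_{h})\|_{L^{2}(\Omega)}^{2}$ and $\|A^{-1/2}\epsilon\|_{L^{2}(\Omega)}^{2}$, modulo a uniform constant. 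This is the one place where one must be careful: the right-hand sides of Lemma \ref{lem 3.2} involve patch norms on $\omega_{E}$, so a given $T$ (and a given jump contribution) is counted at most a bounded number of times, the bound depending only on the shape-regularity constant $c_{1}$. I expect this finite-overlap bookkeeping to be the main, if modest, obstacle in the argument.

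Combining the four contributions gives
\begin{equation*}
\eta_{h,\kappa}^{2}\lesssim \|A^{-1/2}\epsilon\|_{L^{2}(\Omega)}^{2}+\|h^{\kappa}\mathrm{div}\,\epsilon\|_{L^{2}(\Omega)}^{2}+\|u-u_{h}\|_{L^{2}(\Omega)}^{2}+\widetilde{\mathrm{osc}}_{h}^{2},
\end{equation*}
which is the claimed estimate. Note that the argument only uses the structural properties of RT, BDM and BDFM elements through Lemmas \ref{lem 3.1}--\ref{lem 3.3} (bubble functions, extension from an edge, inverse and $L^{2}$-projection estimates), so no additional restriction on $A^{-1}p_{h}$ being piecewise polynomial is needed; the price, as advertised, is precisely the oscillation term $\widetilde{\mathrm{osc}}_{h}^{2}$.
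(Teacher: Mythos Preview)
Your proposal is correct and follows exactly the paper's approach: identify the residual term with $\|h^{\kappa}\mathrm{div}\,\epsilon\|$ via $-\mathrm{div}\,p=f$, then apply Lemmas \ref{lem 3.1}--\ref{lem 3.3} to the remaining three contributions and sum over $T\in\mathcal{T}_{h}$ and $E\in\varepsilon_{h}$. The paper's proof is a two-line sketch of precisely this assembly, so your more detailed write-up (including the finite-overlap bookkeeping for the jump term) simply makes explicit what the paper leaves implicit.
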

\begin{proof} Notice that for all $T\in\mathcal{T}_{h}$, it holds
\begin{equation*}
||h^{\kappa}(f+{\rm div}\ p_{h})||_{L^{2}(T)}=||h^{\kappa}{\rm div}
\ \epsilon||_{L^{2}(T)}\ \ \ {\rm for\ all}\ \ \ 0\leq\kappa\leq1.
\end{equation*}
A combination of Lemmas 3.1-3.3 yields the
assertion (\ref{Lemma 3.4}) by summing over all
$T\in\mathcal{T}_{h}$ and $E\in\varepsilon_{h}$.
\end{proof}

\begin{theorem}\label{thm 3.5}  Under the assumptions of Theorem 
\ref{thm 3.4},
let $\eta_{\mathcal{T}_{h}}(p_{h},\mathcal{T}_{h})$, $\mathcal{E}_h$, and ${\rm osc}_{h}$ be  defined as in Section 2.4 and
2.5. Then, for the estimator of the stress variables for the RT,
BDM, and BDFM elements, there exists a constant $C_{2}$ independent
of mesh-size, such that
\begin{equation}\label{Lemma 3.5}
C_{2}\eta_{\mathcal{T}_{h}}(p_{h},\mathcal{T}_{h})^{2}\leq
\mathcal{E}_h^{2}+{\rm osc}_{h}^{2}.
\end{equation}
\end{theorem}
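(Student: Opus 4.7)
The plan is to bound $\eta_{\mathcal{T}_h}(p_h,\mathcal{T}_h)^2$ termwise, using Lemmas \ref{lem 3.1} and \ref{lem 3.2} applied to the two nontrivial contributions, and noting that the $f$-dependent term in the estimator is literally part of the oscillation.

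First I would dispose of the data term: by definition $h_T^2\|f-f_h\|_{L^2(T)}^2 = \|h(f-f_h)\|_{L^2(T)}^2$, which appears verbatim as a summand of $\mathrm{osc}_{\mathcal{T}_h}^2(p_h,T)$. Hence this piece of $\eta_{\mathcal{T}_h}^2(p_h,T)$ is absorbed into $\mathrm{osc}_h^2$ without further work. (Alternatively, since $\mathrm{div}\, p_h \in L_h$ forces $\mathrm{div}\, p_h = -f_h$, this same quantity equals $h_T^2\|\mathrm{div}(p-p_h)\|_{L^2(T)}^2$, which lies in $\mathcal{E}_h^2$.)

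Next I would apply Lemma \ref{lem 3.1} locally to estimate $h_T\|\mathrm{curl}(A^{-1}p_h)\|_{L^2(T)}$ by $\|A^{-1/2}\epsilon\|_{L^2(T)} + h_T\|P_n^2\mathrm{curl}(A^{-1}p_h)\|_{L^2(T)}$; squaring, using $(a+b)^2\le 2(a^2+b^2)$, and summing over $T\in\mathcal{T}_h$ delivers a bound by $\mathcal{E}_h^2+\mathrm{osc}_h^2$. For the edge jump term I would use Lemma \ref{lem 3.2} to control $h_E^{1/2}\|J(A^{-1}p_h\cdot\tau)\|_{L^2(E)}$ by $h_E^{1/2}\|P_{n+1}^2 J(A^{-1}p_h\cdot\tau)\|_{L^2(E)}$, $h_E\|P_n^2\mathrm{curl}_h(A^{-1}p_h)\|_{L^2(\omega_E)}$, and $\|A^{-1/2}\epsilon\|_{L^2(\omega_E)}$, then sum over $E\in\varepsilon_h$. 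Shape regularity gives $h_E\approx h_T$ for $T\subset\omega_E$ and bounds the number of edges of each $T$ (hence the overlap multiplicity of the patches $\omega_E$) by a constant depending only on the shape regularity constant $c_1$, so that
\[
\sum_{E\in\varepsilon_h} \bigl(h_E^2\|P_n^2\mathrm{curl}_h(A^{-1}p_h)\|_{L^2(\omega_E)}^2+\|A^{-1/2}\epsilon\|_{L^2(\omega_E)}^2\bigr) \lesssim \mathrm{osc}_h^2+\mathcal{E}_h^2,
\]
and similarly the $L^2$-projection term on $E$ gets absorbed into $\mathrm{osc}_h^2$ after summing. Finally I would identify the edge sum $\sum_E h_E\|J(A^{-1}p_h\cdot\tau)\|_{L^2(E)}^2$ with $\sum_T h_T\|J(A^{-1}p_h\cdot\tau)\|_{L^2(\partial T)}^2$ up to a constant, completing the estimate.

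I expect no serious obstacle; the only point needing care is the bookkeeping on patches $\omega_E$ when passing from Lemma \ref{lem 3.2} (which produces contributions on $\omega_E$) to a clean bound by $\mathcal{E}_h^2+\mathrm{osc}_h^2$, which relies on the finite-overlap property guaranteed by the angle condition in Section 2.3. The explicit value of $C_2$ then depends only on shape regularity and the constants hidden in Lemmas \ref{lem 3.1} and \ref{lem 3.2}.
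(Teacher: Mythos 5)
Your proof is correct and follows essentially the same route as the paper: the paper's one-line argument also just combines Lemmas~\ref{lem 3.1} and~\ref{lem 3.2}, sums over $T\in\mathcal{T}_h$ and $E\in\varepsilon_h$, and uses the identity $-\mathrm{div}\,\epsilon=f+\mathrm{div}\,p_h=f-f_h$ (valid since $\mathrm{div}\,M_h=L_h$ forces $\mathrm{div}\,p_h=-f_h$) to absorb the data term. Your extra remarks on the finite overlap of the patches $\omega_E$ and on the two equivalent ways of absorbing the $f$-term simply make explicit the bookkeeping the paper leaves implicit.
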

\begin{proof}
Since $-{\rm div}\ \epsilon=f+{\rm div}\ p_{h}$, combining Lemmas \ref{lem 3.1}-\ref{lem 3.2}, and summing over all $T\in\mathcal{T}_{h}$ and
$E\in\varepsilon_{h}$, we obtain the desired result (\ref{Lemma
3.5}).
\end{proof}

\section{Auxiliary results}
In this section, we will give some auxiliary results for 
convergence and quasi-optimality of the AMFEM for the stress
variable.

\subsection{Quasi-orthogonality}
\begin{lemma}\label{lem 4.1}  Given a function $f\in L_{0}^{2}(\Omega)$,
there exists a function $q\in H_{0}^{1}(\Omega)^{2}$ such that
\begin{equation*}
{\rm div}\ q=f\ \ \ \ {\rm and}\ \ \ \
||q||_{H^{1}(\Omega)}\leq||f||_{L^{2}(\Omega)}.
\end{equation*}
\end{lemma}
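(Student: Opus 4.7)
The plan is to establish the classical surjectivity of the divergence operator $\text{div}:H_0^1(\Omega)^2\to L_0^2(\Omega)$, which in the form stated is the existence of a bounded right inverse. The cleanest route is through the inf-sup (Lions-Nečas) condition; the stated bound $\|q\|_{H^1(\Omega)}\le \|f\|_{L^2(\Omega)}$ should be read as $\lesssim$, with the implicit constant depending only on $\Omega$.

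First I would invoke the Nečas inequality: for every $g\in L_0^2(\Omega)$,
\begin{equation*}
\|g\|_{L^2(\Omega)}\lesssim \|\nabla g\|_{H^{-1}(\Omega)^2},
\end{equation*}
where the right-hand side denotes the dual norm on $H_0^1(\Omega)^2$. This inequality is classical on bounded Lipschitz (in particular, polygonal) domains.

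Second, using the integration-by-parts identity $\langle\nabla g,q\rangle_{H^{-1},H_0^1}=-({\rm div}\,q,g)_{0,\Omega}$ for $g\in L^2(\Omega)$ and $q\in H_0^1(\Omega)^2$, the Nečas inequality is equivalent to the inf-sup condition
\begin{equation*}
\sup_{0\neq q\in H_0^1(\Omega)^2}\frac{({\rm div}\,q,g)_{0,\Omega}}{\|q\|_{H^1(\Omega)}}\ge \beta\,\|g\|_{L^2(\Omega)}\qquad\forall\,g\in L_0^2(\Omega),
\end{equation*}
with $\beta>0$ depending only on $\Omega$. By the closed-range theorem applied to $\text{div}:H_0^1(\Omega)^2\to L_0^2(\Omega)$ this makes $\text{div}$ surjective onto $L_0^2(\Omega)$ and guarantees the existence of a bounded right inverse. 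Given $f\in L_0^2(\Omega)$, I would take $q$ as the preimage of minimal $H^1$-norm (i.e., the one orthogonal in $H_0^1$ to $\ker({\rm div})$), which yields $\text{div}\,q=f$ together with $\|q\|_{H^1(\Omega)}\le \beta^{-1}\|f\|_{L^2(\Omega)}$.

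The main obstacle is proving the Nečas inequality itself on a general bounded polygon; the usual path is either the Bogovskii construction (a partition of unity reduction to star-shaped subdomains on each of which an explicit integral kernel produces a bounded right inverse of $\text{div}$) or a duality argument via the solvability of the auxiliary Stokes problem $-\Delta q+\nabla p=0$, $\text{div}\,q=f$ in $\Omega$, $q=0$ on $\partial\Omega$. Either construction gives the bound stated in the lemma; since the ambient paper only uses this inequality to produce a right inverse of $\text{div}$, the precise value of the constant (here absorbed into the $\le$ sign) plays no further role in the sequel.
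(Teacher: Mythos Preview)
Your argument is correct: the Ne\v{c}as inequality together with the closed-range theorem is a standard and valid route to the bounded right inverse of $\mathrm{div}:H_0^1(\Omega)^2\to L_0^2(\Omega)$, and you are right that the inequality should be read as $\lesssim$ with a domain-dependent constant.

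The paper does not actually prove this lemma; it simply cites Brenner--Scott, Arnold--Scott--Vogelius, and Dur\'an--Muschietti for the smooth/convex, non-convex polygonal, and general Lipschitz cases respectively. Those references take the constructive route you mention as your alternative: Arnold--Scott--Vogelius build an explicit right inverse on polygons, and Dur\'an--Muschietti establish boundedness of a Bogovskii-type integral operator. Your primary argument via Ne\v{c}as and closed range is the more abstract functional-analytic counterpart; it is shorter but non-constructive, whereas the cited constructions give an explicit $q$ and extend more readily to weighted and $L^p$ settings. Either approach suffices for the paper's purposes, since only the existence of $q$ with the stated bound is used downstream (in Lemma~4.3).
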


We refer to \cite{Brenner;Scott1,Arnold,Duran} for  detailed proofs
of this lemma respectively on smooth or convex, non-convex and general Lipschitz
domains.

\begin{lemma}\label{lem 4.2}  Let $L_{h}$, $M_h$ be respectively the discrete displacement and stress spaces given in Section 2.2.  Set $W :=H({\rm div},\Omega)\cap
L^{\varrho}(\Omega)^{2}$ for some $\varrho>2$, and let $\Pi_{L_{h}}$, $id$ and $\perp$ be respectively the
$L^{2}(\Omega)-$projection onto   $L_{h}$,  
the identity operator  and 
$L^{2}(\Omega)-$orthogonality. Then there exists an operator
$\Pi_{h}:W\rightarrow M_{h}$ with the following commuting diagram
\begin{equation}\label{Lemma 4.2.1}
\begin{array}{lll}
&W&\stackrel{{\rm div}}{\longrightarrow}\ \  L^{2}(\Omega)\\
&\downarrow\Pi_{h}&\ \ \ \ \ \ \ \ \ \ \downarrow\Pi_{L_{h}}\\
&M_{h}&\stackrel{{\rm div}}{\longrightarrow}\ \ \ L_{h}
\end{array}
\end{equation}
such that

\noindent
(${I}$) it holds a local estimate (note that
$H^{1}(\bigcup\mathcal{T}_{h})^{2}\cap H({\rm div},\Omega)\subset
W$)
\begin{equation}\label{Lemma 4.2.2}
||h^{-1}(id-\Pi_{h})q||_{L^{2}(\Omega)}\lesssim
|q|_{H^{1}(\bigcup\mathcal{T}_{h})}\ \ {\rm for\ all}\ \ q\in
H^{1}(\bigcup\mathcal{T}_{h})^{2}\cap H({\rm div},\Omega);
\end{equation}

\noindent($II$) $\Pi_{h}$ approximates the normal components on element edges
with
\begin{equation*}
\displaystyle\int_{E}v_{h}(id-\Pi_{h})q\cdot\nu_{E}ds=0\ \ \ {\rm
for\ all}\ \ E\in\varepsilon_{h}, v_{h}\in L_{h}, q\in W,
\end{equation*}
where $\nu_{E}$ is the unit normal vector along $E$.
\end{lemma}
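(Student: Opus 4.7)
The plan is to construct $\Pi_h$ as the standard canonical (Fortin-type) interpolant associated with the mixed element in use (RT, BDM, or BDFM), and then verify each of the three claimed properties separately. The degrees of freedom for these elements on a triangle $T$ come in two families: edge moments of the normal component against polynomials on each $E \subset \partial T$, and interior moments against polynomials on $T$. The role of the hypothesis $q \in L^\varrho(\Omega)^2$ with $\varrho > 2$ is precisely to guarantee that the edge moments $\int_E (q \cdot \nu_E) p_E \, ds$ are well-defined (since an $H(\mathrm{div})$ trace alone lives only in $H^{-1/2}$). Continuity of the normal component across interior edges holds by construction because the edge moments are computed identically from either adjacent element, so the assembled $\Pi_h q$ lies in $H(\mathrm{div},\Omega) \cap M_h$.

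First I would verify property ($II$). Because the edge polynomial space for the normal-component DOFs is, by the design of these three element families, large enough to contain $L_h|_E$ (this is the key matching property between $M_h$ and $L_h$ that produces $\mathrm{div}\, M_h = L_h$), the condition $\int_E v_h (\Pi_h q) \cdot \nu_E \, ds = \int_E v_h q \cdot \nu_E \, ds$ holds for every $v_h \in L_h$, which is property ($II$).

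Next I would deduce the commuting diagram. For fixed $T \in \mathcal{T}_h$ and $v_h \in L_h$, integration by parts gives
\begin{equation*}
\int_T v_h \, \mathrm{div}(\Pi_h q) \, dx = -\int_T \nabla v_h \cdot \Pi_h q \, dx + \int_{\partial T} v_h \, \Pi_h q \cdot \nu \, ds.
\end{equation*}
Since $\nabla v_h|_T$ lies in the interior polynomial test space of the DOFs, the first term equals $-\int_T \nabla v_h \cdot q \, dx$; by property ($II$), the boundary term equals $\int_{\partial T} v_h \, q \cdot \nu \, ds$. Integration by parts backwards yields $\int_T v_h \, \mathrm{div}\, q \, dx = \int_T v_h \, \Pi_{L_h}(\mathrm{div}\, q) \, dx$. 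Since $\mathrm{div}(\Pi_h q)|_T \in L_h|_T$ and $v_h \in L_h$ was arbitrary, this gives $\mathrm{div}(\Pi_h q) = \Pi_{L_h}(\mathrm{div}\, q)$.

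Finally, for the local estimate (\ref{Lemma 4.2.2}), I would work element by element: pull back to the reference triangle $\hat T$ via the affine map, use that $\Pi_{\hat T}$ is a bounded projection from $H^1(\hat T)^2$ into the finite-dimensional polynomial space $M_l(\hat T)$ and is the identity on it, invoke a Bramble--Hilbert argument to obtain $\|(\mathrm{id} - \Pi_{\hat T}) \hat q\|_{L^2(\hat T)} \lesssim |\hat q|_{H^1(\hat T)}$, and then push forward using the standard scaling $\|\cdot\|_{L^2(T)} \approx h_T \|\cdot\|_{L^2(\hat T)}$ and $|\cdot|_{H^1(T)} \approx \|\cdot\|_{L^2(\hat T) \to H^1(\hat T)}$ on contravariant (Piola) transformed vector fields, which produces exactly the factor $h_T^{-1}$ in front of the error. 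Summation over $T \in \mathcal{T}_h$ gives the desired global bound. The main obstacle is essentially bookkeeping: one must check carefully that on the three element families (RT, BDM, BDFM) the edge DOF test spaces do contain $L_h|_E$ and the interior DOF test spaces contain $\nabla L_h|_T$; without this matching the commuting-diagram step collapses. Everything else is classical scaling.
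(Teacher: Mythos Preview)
Your proposal is correct and follows the classical construction of the canonical mixed-element interpolant. Note, however, that the paper does not actually prove this lemma: immediately after the statement it simply writes ``For the detailed construction of such interpolation operator $\Pi_{h}$ and proof of these properties, we refer to \cite{Hiptmair,Arnold0,Brezzi;Fortin}.'' Your sketch is essentially the argument contained in those references (particularly Brezzi--Fortin), so there is no methodological difference to compare---you have supplied what the paper defers to the literature.
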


For the detailed construction of such interpolation operator
$\Pi_{h}$ and proof of these properties, we refer to
\cite{Hiptmair,Arnold0,Brezzi;Fortin}. Note that the above commuting
diagram means
\begin{equation*}
{\rm div}(id-\Pi_{h})W \perp L_{h}.
\end{equation*}

\begin{lemma}\label{lem 4.3}  Let $\mathcal{T}_{h}$ and
$\mathcal{T}_{H}$ be two nested triangulations, $\Pi_{L_{H}}$ be the
$L^{2}(\Omega)$ $-$projection onto $L_{H}$, and $(p_{h},u_{h})\in
M_{h}\times L_{h}$ be the solutions of (\ref{quasi-optimality 2.3}).
Then for any $T\in\mathcal{T}_{H}$, there exists a positive constant
$C_{0}$ depending only on the shape regularity of $\mathcal{T}_{H}$,
such that
\begin{equation}\label{Lemma 4.2.4}
||u_{h}-\Pi_{L_{H}}u_{h}||_{L^{2}(T)}\leq\sqrt{C_{0}}H_{T}||A^{-1/2}p_{h}||_{L^{2}(T)}.
\end{equation}
\end{lemma}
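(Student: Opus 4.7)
The plan is a duality argument that uses Lemmas \ref{lem 4.1} and \ref{lem 4.2} to convert the $L^{2}$-norm of $g := u_{h}-\Pi_{L_{H}}u_{h}$ on $T$ into a test of the first equation of \eqref{quasi-optimality 2.3}. Because $L_{H}|_{T}=D_{l}(T)\supseteq P_{0}$, the defining property of the $L^{2}$-projection yields $\int_{T}g=0$ and $(g,\Pi_{L_{H}}u_{h})_{0,T}=0$, which combines to give $(g,u_{h})_{0,T}=\|g\|_{L^{2}(T)}^{2}$. Extended by zero outside $T$, $g$ sits in $L_{h}$, because $\mathcal{T}_{h}$ refines $\mathcal{T}_{H}$ and hence $T$ is exactly the union of the $\mathcal{T}_{h}$-elements that it contains.

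Next, I would apply Lemma \ref{lem 4.1} on $T$ (valid because $g|_{T}\in L_{0}^{2}(T)$) to obtain $q\in H_{0}^{1}(T)^{2}$ with ${\rm div}\,q=g$ and $\|q\|_{H^{1}(T)}\leq\|g\|_{L^{2}(T)}$. Extending $q$ by zero puts it in $H_{0}^{1}(\Omega)^{2}\subset W$. Setting $q_{h}:=\Pi_{h}q\in M_{h}$, the commuting diagram of Lemma \ref{lem 4.2} gives ${\rm div}\,q_{h}=\Pi_{L_{h}}({\rm div}\,q)=\Pi_{L_{h}}g=g$, the last step because $g\in L_{h}$. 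Testing the first equation of \eqref{quasi-optimality 2.3} against $q_{h}$ produces
\begin{equation*}
\|g\|_{L^{2}(T)}^{2}=(g,u_{h})_{0,T}=({\rm div}\,q_{h},u_{h})_{0,\Omega}=-(A^{-1}p_{h},q_{h})_{0,\Omega},
\end{equation*}
which the weighted Cauchy--Schwarz inequality and the uniform positive definiteness of $A$ bound by $\|A^{-1/2}p_{h}\|_{L^{2}(\Omega)}\,\|q_{h}\|_{L^{2}(\Omega)}$ up to a constant.

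To turn this into the desired local estimate with prefactor $H_{T}$, two things are needed. First, $q_{h}$ must be supported in $T$: since $q$ vanishes on $\partial T$ and outside $T$, and $\partial T$ coincides with a union of edges of $\mathcal{T}_{h}$, every interior moment and every edge moment that defines $\Pi_{h}q$ on a $\mathcal{T}_{h}$-element lying outside $T$ vanishes, so $q_{h}\equiv 0$ off $T$. Second, I need a factor $H_{T}$ in $\|q_{h}\|_{L^{2}(T)}$: write $q_{h}=q-(id-\Pi_{h})q$, use Poincar\'{e} on $T$ (legitimate since $q|_{\partial T}=0$) to get $\|q\|_{L^{2}(T)}\lesssim H_{T}|q|_{H^{1}(T)}$, and combine $h|_{T'}\leq H_{T}$ for $T'\subset T$ with Lemma \ref{lem 4.2}(I) to obtain $\|(id-\Pi_{h})q\|_{L^{2}(T)}\lesssim H_{T}|q|_{H^{1}(T)}$. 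Then $\|q_{h}\|_{L^{2}(T)}\lesssim H_{T}\|q\|_{H^{1}(T)}\leq H_{T}\|g\|_{L^{2}(T)}$, and cancelling one $\|g\|_{L^{2}(T)}$ from the main inequality yields the claim.

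The main obstacle I expect is the support property of $q_{h}$: the commuting diagram alone does not localize $\Pi_{h}$ to $T$, and one must argue directly from the moment-based definition of $\Pi_{h}$ on the RT/BDM/BDFM elements, combined with the crucial fact that $q$ is the zero extension of an $H_{0}^{1}(T)$-function across a $\mathcal{T}_{h}$-conforming boundary $\partial T$. Everything after that support statement is routine Cauchy--Schwarz, Poincar\'{e}, and the local approximation bound already in Lemma \ref{lem 4.2}.
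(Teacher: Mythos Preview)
Your argument is correct and follows essentially the same duality strategy as the paper: construct $q\in H_{0}^{1}(T)^{2}$ with ${\rm div}\,q=g$ via Lemma~\ref{lem 4.1}, extend by zero, push through the commuting Fortin interpolator, test the first equation of \eqref{quasi-optimality 2.3}, and localize. The only difference is that the paper takes $(\Pi_{h}-\Pi_{H})q$ as the test function and gets the $H_{T}$ factor from $\|q-\Pi_{h}q\|_{L^{2}(T)}+\|q-\Pi_{H}q\|_{L^{2}(T)}\lesssim H_{T}\|q\|_{H^{1}(T)}$ via Lemma~\ref{lem 4.2}(I) at both levels, whereas you use $\Pi_{h}q$ alone and recover the $H_{T}$ from Poincar\'{e} on $T$ plus Lemma~\ref{lem 4.2}(I) at level $h$; both routes need the same support argument for the interpolant, and yours is slightly leaner since it never invokes $\Pi_{H}$.
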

\begin{proof} Let $\Pi_{L_{h}}$ denote the $L^{2}-$projection operator over $L_{h}$.
 For any $T\in\mathcal{T}_{H}$, by the
definition of $L^{2}-$projection operator $\Pi_{L_{H}}$, we have
$\int_{T}(\Pi_{L_{h}}-\Pi_{L_{H}})u_{h}=0$, i.e.,
$(\Pi_{L_{h}}-\Pi_{L_{H}})u_{h}\in L_{0}^{2}(T)$. We thus can apply
Lemma \ref{lem 4.1} to find a function $q\in H_{0}^{1}(T)^{2}$ such that
\begin{equation}\label{Lemma 4.2.5}
{\rm div}\ q=(\Pi_{L_{h}}-\Pi_{L_{H}})u_{h}\ {\rm in}\ T\ {\rm and}\
||q||_{H^{1}(T)}\lesssim||(\Pi_{L_{h}}-\Pi_{L_{H}})u_{h}||_{L^{2}(T)}.
\end{equation}

We extend $q$ to $H_{0}^{1}(\Omega)^{2}$ by zero, since $\Pi_{h}$
(or $\Pi_{H}$) approximates the normal components on elements edge,
the second result (${\it II}$) of Lemma 
\ref{lem 4.2} implies that
$(\Pi_{h}-\Pi_{H})q\in M_{h}$ and ${\rm
supp}(\Pi_{h}-\Pi_{H})q\subseteq T$. 
 Noticing that ${\rm div}\ q\in L_{h}$, we have
\begin{equation}\label{Lemma 4.3.5}
({\rm div}\ q-(\Pi_{L_{h}}-\Pi_{L_{H}}){\rm div}\
q,(\Pi_{L_{h}}-\Pi_{L_{H}})u_{h})_{0,T}=0
\end{equation}
and
\begin{equation}\label{Lemma 4.3.6}
(u_{h}-(\Pi_{L_{h}}-\Pi_{L_{H}})u_{h},(\Pi_{L_{h}}-\Pi_{L_{H}}){\rm
div}\ q)_{0,T}=0.
\end{equation}
Since
$\Pi_{L_{h}}u_{h}=u_{h}$,
a combination of the first equality of (\ref{Lemma 4.2.5}),
and (\ref{Lemma 4.3.5})-(\ref{Lemma 4.3.6}) yields
\begin{eqnarray}\label{Lemma 4.3.7}
||u_h-\Pi_{L_{H}}u_{h}||_{L^{2}(T)}^{2}&=&||(\Pi_{L_{h}}-\Pi_{L_{H}})u_{h}||_{L^{2}(T)}^{2}\nonumber\\
&=&
((\Pi_{L_{h}}-\Pi_{L_{H}})u_{h},{\rm div}\
q)_{0,T}\nonumber\\
&=&(u_{h},(\Pi_{L_{h}}-\Pi_{L_{H}}){\rm div}\ q)_{0,T}.
\end{eqnarray}
Using the locality of $q$, the commuting property
(\ref{Lemma 4.2.1}) and (\ref{quasi-optimality 2.3}), we obtain
\begin{eqnarray}\label{Lemma 4.3.8}
(u_{h},(\Pi_{L_{h}}-\Pi_{L_{H}}){\rm div}\
q)_{0,T}&=&(u_{h},(\Pi_{L_{h}}-\Pi_{L_{H}}){\rm div}\
q)_{0,\Omega}\nonumber\\
&=&(u_{h},{\rm div}(\Pi_{h}-\Pi_{H})q)_{0,\Omega}\nonumber\\
&=&
-(A^{-1}p_{h},(\Pi_{h}-\Pi_{H})q)_{0,\Omega}\nonumber\\
&=&-(A^{-1}p_{h},(\Pi_{h}-\Pi_{H})q)_{0,T}.
\end{eqnarray}
The local approximation (\ref{Lemma 4.2.2}) of Lemma 
\ref{lem 4.2} indicates
{\small
\begin{eqnarray}\label{Lemma 4.3.10}
 |(A^{-1}p_{h},(\Pi_{h}-\Pi_{H})q)_{0,T}|&\leq&||A^{-1}p_{h}||_{L^{2}(T)}
(||q-\Pi_{h}q||_{L^{2}(T)} +||q-\Pi_{H}q||_{L^{2}(T)}) \nonumber\\
&\leq&
C_{0}H_{T}||A^{-1/2}p_{h}||_{L^{2}(T)}||q||_{H^{1}(T)}.
\end{eqnarray}
}
Finally, the desired result
(\ref{Lemma 4.2.4}) follows from   the second inequality of (\ref{Lemma 4.2.5}) and
(\ref{Lemma 4.3.7}) - (\ref{Lemma 4.3.10}).
\end{proof}

In order to prove the quasi-orthogonality, we need to introduce a
pair of auxiliary solutions. Let $f_{H} :=\Pi_{L_{H}}f$
denote the $L^{2}-$projection of $f$ over $L_{H}$, and  consider the following problem:
Find $(\widetilde{p}_{h},\widetilde{u}_{h})\in M_{h}\times L_{h}$
such that
\begin{equation}\label{quasi-optimality 2.4}
\begin{array}{lll}
\displaystyle(A^{-1}\widetilde{p}_{h},q_{h})_{0,\Omega}+({\rm div}\
q_{h},\widetilde{u}_{h})_{0,\Omega}=0\ \ &{\rm for\
all}&\displaystyle\
q_{h}\in M_{h},\vspace{2mm}\\
\displaystyle({\rm div}\ \widetilde{p}_{h},
v_{h})_{0,\Omega}=-(f_{H},v_{h})_{0,\Omega}\ \ &{\rm for\
all}&\displaystyle\ v_{h}\in L_{h}.
\end{array}
\end{equation}

In fact, the solution $(\widetilde{p}_{h},\widetilde{u}_{h})$ of
this auxiliary problem  may be
regarded as another approximation to the flux and displacement $(p,u)$.

\begin{lemma}\label{lem 4.4}  Let $\mathcal{T}_{h}$ and
$\mathcal{T}_{H}$ be two nested triangulations, ${\rm
osc}(f_{h},\mathcal{T}_{H})$ denote the oscillation of  $f_{h}
:=\Pi_{L_{h}}f$ over $\mathcal{T}_{H}$, $(p_{h},u_{h})$ and
$(\widetilde{p}_{h},\widetilde{u}_{h})$ be the solutions of
(\ref{quasi-optimality 2.3}) and (\ref{quasi-optimality 2.4}),
respectively. Then there exists a constant $C_{0}$ depending only on
the shape regularity of $\mathcal{T}_{H}$ such that
\begin{equation}\label{Lemma 4.4}
||A^{-1/2}(p_{h}-\widetilde{p}_{h})||_{L^{2}(\Omega)}\leq\sqrt{C_{0}}{\rm
osc}(f_{h},\mathcal{T}_{H}.)
\end{equation}
\end{lemma}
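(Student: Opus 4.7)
The plan is to subtract the two mixed systems and test with the stress variable error itself, so that the key quantity $\|A^{-1/2}(p_h-\tilde p_h)\|_{L^2(\Omega)}^2$ appears on the left-hand side. Setting $r := p_h-\tilde p_h$ and $w := u_h-\tilde u_h$, subtraction of (\ref{quasi-optimality 2.3}) from (\ref{quasi-optimality 2.4}) gives the discrete saddle-point system
\begin{equation*}
(A^{-1}r,q_h)_{0,\Omega}+(\mathrm{div}\,q_h,w)_{0,\Omega}=0,\qquad (\mathrm{div}\,r,v_h)_{0,\Omega}=-(f_h-f_H,v_h)_{0,\Omega}
\end{equation*}
for all $q_h\in M_h$, $v_h\in L_h$. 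Choosing $q_h=r$ in the first equation and using the second with $v_h=w\in L_h$ yields
\begin{equation*}
\|A^{-1/2}r\|_{L^2(\Omega)}^{2}=(f_h-f_H,w)_{0,\Omega}.
\end{equation*}

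Next I would exploit the nestedness $L_H\subset L_h$, which implies $f_H=\Pi_{L_H}f=\Pi_{L_H}\Pi_{L_h}f=\Pi_{L_H}f_h$. Hence $f_h-f_H=f_h-\Pi_{L_H}f_h$ is $L^2$-orthogonal to every piecewise polynomial from $L_H$ on each element $T\in\mathcal{T}_H$, so we may subtract $\Pi_{L_H}w$ from $w$ inside the scalar product to obtain
\begin{equation*}
\|A^{-1/2}r\|_{L^2(\Omega)}^{2}=\sum_{T\in\mathcal{T}_H}(f_h-\Pi_{L_H}f_h,\,w-\Pi_{L_H}w)_{0,T}.
\end{equation*}

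The main step is then to control $\|w-\Pi_{L_H}w\|_{L^2(T)}$ elementwise. Because $(r,w)$ satisfies exactly the same first equation of a mixed system as $(p_h,u_h)$ does in (\ref{quasi-optimality 2.3}), the proof of Lemma \ref{lem 4.3} applies verbatim (linearity: the argument uses only $\mathrm{div}(\Pi_h-\Pi_H)q\in L_h$, the commuting diagram of Lemma \ref{lem 4.2}, and the first equation of (\ref{quasi-optimality 2.3})) and yields
\begin{equation*}
\|w-\Pi_{L_H}w\|_{L^2(T)}\leq \sqrt{C_0}\,H_T\|A^{-1/2}r\|_{L^2(T)}\qquad\forall\,T\in\mathcal{T}_H.
\end{equation*}

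Substituting this bound into the previous identity and applying Cauchy--Schwarz over $\mathcal{T}_H$ gives
\begin{equation*}
\|A^{-1/2}r\|_{L^2(\Omega)}^{2}\leq \sqrt{C_0}\,\Bigl(\sum_{T\in\mathcal{T}_H}H_T^{2}\|f_h-\Pi_{L_H}f_h\|_{L^2(T)}^{2}\Bigr)^{1/2}\|A^{-1/2}r\|_{L^2(\Omega)},
\end{equation*}
and cancelling one power of $\|A^{-1/2}r\|_{L^2(\Omega)}$ produces exactly the claimed estimate, since the remaining factor is $\mathrm{osc}(f_h,\mathcal{T}_H)$ by definition. The only delicate point is the reuse of Lemma \ref{lem 4.3} for the pair $(r,w)$; it goes through because that lemma depends only on the first of the two mixed equations, which $(r,w)$ inherits unchanged from the subtraction of (\ref{quasi-optimality 2.3}) and (\ref{quasi-optimality 2.4}).
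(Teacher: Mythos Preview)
Your proof is correct and follows essentially the same route as the paper: subtract the two discrete systems, test with $(r,w)=(p_h-\tilde p_h,u_h-\tilde u_h)$, use $L_H\subset L_h$ to insert $\Pi_{L_H}w$, and then invoke Lemma~\ref{lem 4.3} for the pair $(r,w)$ before finishing with Cauchy--Schwarz. Your explicit justification that Lemma~\ref{lem 4.3} depends only on the first mixed equation---and hence transfers to $(r,w)$---is in fact a bit more carefully stated than in the paper, which simply writes ``apply Lemma~\ref{lem 4.3} to $v_h-\Pi_{L_H}v_h$''.
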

\begin{proof} Recall that $(p_{h}-\widetilde{p}_{h},u_{h}-
\widetilde{u}_{h})\in M_{h}\times L_{h}$ satisfies the equations
\begin{equation}\label{Lemma 4.4.1}
\begin{array}{lll}
\displaystyle(A^{-1}(p_{h}-\widetilde{p}_{h}),q_{h})_{0,\Omega}+({\rm
div}\ q_{h},u_{h}-\widetilde{u}_{h})_{0,\Omega}=0\ \ &{\rm for\
all}&\displaystyle\
q_{h}\in M_{h},\vspace{2mm}\\
\displaystyle({\rm div}(p_{h}-\widetilde{p}_{h},
v_{h})_{0,\Omega}=-(f_{h}-f_{H},v_{h})_{0,\Omega}\ \ &{\rm for\
all}&\displaystyle\ v_{h}\in L_{h}.
\end{array}
\end{equation}
According to the above equations (\ref{Lemma 4.4.1}), we choose
$q_{h}=p_{h}-\widetilde{p}_{h}$ and $v_{h}=u_{h}-\widetilde{u}_{h}$
to obtain
\begin{equation}\label{Lemma 4.4.2}
\begin{array}{lll}
||A^{-1/2}(p_{h}-\widetilde{p}_{h})||_{L^{2}(\Omega)}^{2}&=&-({\rm
div}(p_{h}-\widetilde{p}_{h}),u_{h}-\widetilde{u}_{h})_{0,\Omega}\vspace{2mm}\\
&=&(f_{h}-\Pi_{L_{H}}f,u_{h}-\widetilde{u}_{h})_{0,\Omega}=
(f_{h}-\Pi_{L_{H}}f,v_{h})_{0,\Omega}.
\end{array}
\end{equation}
Since $L_{H}\subset L_{h}$, it holds
\begin{equation*}
(\Pi_{L_{h}}f,v_{H})_{0,\Omega}=(f,v_{H})_{0,\Omega}=
(\Pi_{L_{H}}\Pi_{L_{h}}f,v_{H})_{0,\Omega}
=(\Pi_{L_{H}}f,v_{H})
\end{equation*}
for   all  $v_{H}\in L_{H}$. This implies 
$$\Pi_{L_{H}}\Pi_{L_{h}}f=\Pi_{L_{H}}f\ \ \text{ and }\ 
(\Pi_{L_{h}}f-\Pi_{L_{H}}f,\Pi_{L_{H}}v_{h})_{0,\Omega}=0.$$ 
From
(\ref{Lemma 4.4.2}), we have
\begin{equation}\label{4.14}
\begin{array}{lll}
||A^{-1/2}(p_{h}-\widetilde{p}_{h})||_{L^{2}(\Omega)}^{2}&=&
((\Pi_{L_{h}}-\Pi_{L_{H}})f,v_{h}-\Pi_{L_{H}}v_{h})_{0,\Omega} \vspace{2mm}\\
&=&\displaystyle\sum\limits_{T\in\mathcal{T}_{H}}((\Pi_{L_{h}}-
\Pi_{L_{H}})f,v_{h}-\Pi_{L_{H}}v_{h})_{0,T}.
\end{array}
\end{equation}
We apply Lemma 
\ref{lem 4.3} to $v_{h}-\Pi_{L_{H}}v_{h}$ in (\ref{4.14}) and obtain
\begin{equation*}
\begin{array}{lll}
||A^{-\frac{1}{2}}(p_{h}-\widetilde{p}_{h})||_{L^{2}(\Omega)}^{2}&\leq&\displaystyle
C_{0}^{\frac{1}{2}}\sum\limits_{T\in\mathcal{T}_{H}}H_{T}||f_{h}-
f_{H}||_{L^{2}(T)}||A^{-\frac{1}{2}}(p_{h}-\widetilde{p}_{h})||_{L^{2}(T)}\vspace{2mm}\\
&\leq&\sqrt{C_{0}}{\rm
osc}(f_{h},\mathcal{T}_{H})||A^{-1/2}(p_{h}-\widetilde{p}_{h})||_{L^{2}(\Omega)},
\end{array}
\end{equation*}
which leads to the desired result (\ref{Lemma 4.4}).
\end{proof}

We  state the property of
quasi-orthogonality as follows.

\begin{theorem}\label{thm 4.5}({\rm Quasi-orthogonality})  Given $f\in
L^{2}(\Omega)$ and two nested triangulations $\mathcal{T}_{h}$ and
$\mathcal{T}_{H}$, let $(p_{h},u_{h})$ and $(p_{H},u_{H})$ be the
solutions of (\ref{quasi-optimality 2.3}) with respect to
$\mathcal{T}_{h}$ and $\mathcal{T}_{H}$, respectively. Then it holds
\begin{equation}\label{Lemma 4.5}
(A^{-1}(p-p_{h}),p_{h}-p_{H})_{0,\Omega}\leq
C_{0}^{1/2}||A^{-1/2}(p-p_{h})||_{L^{2}(\Omega)}{\rm
osc}(f_{h},\mathcal{T}_{H}).
\end{equation}
Furthermore, for any $\delta_{1}>0$, it holds
\begin{equation}\label{Lemma 4.5.1}
\begin{array}{lll}
&&(1-\delta_{1})||A^{-1/2}(p-p_{h})||_{L^{2}(\Omega)}^{2}\\
&\leq&
||A^{-1/2}(p-p_{H})||_{L^{2}(\Omega)}^{2}
-||A^{-1/2}(p_{h}-p_{H})||_{L^{2}(\Omega)}^{2}
+\frac{C_{0}}{\delta_{1}}{\rm osc}^{2}(f_{h},\mathcal{T}_{H}).
\end{array}
\end{equation}
In particular, if ${\rm osc}(f_{h},\mathcal{T}_{H})=0$, then it
holds
\begin{equation}\label{Lemma 4.5.2}
||A^{-1/2}(p-p_{h})||_{L^{2}(\Omega)}^{2}=||A^{-1/2}(p-p_{H})||_{L^{2}(\Omega)}^{2}-
||A^{-1/2}(p_{h}-p_{H})||_{L^{2}(\Omega)}^{2}.
\end{equation}
\end{theorem}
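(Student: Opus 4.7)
The plan is to introduce the auxiliary discretization $(\widetilde{p}_{h},\widetilde{u}_{h})$ of (\ref{quasi-optimality 2.4}) as a bridge between $(p_{h},u_{h})$ and $(p_{H},u_{H})$. The point is that $\widetilde{p}_{h}$ and $p_{H}$ share the same right-hand side data $f_{H}$ (since $L_{H}\subset L_{h}$ and $\mathrm{div}\,M_{H}=L_{H}$, $\mathrm{div}\,M_{h}=L_{h}$), so one gets the clean identity $\mathrm{div}(\widetilde{p}_{h}-p_{H})=0$. Thus $\widetilde{p}_{h}-p_{H}\in M_{h}$ is a divergence-free test function.

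The core of the proof is then the following orthogonality: for $q_{h}:=\widetilde{p}_{h}-p_{H}$, testing the continuous equation (\ref{quasi-optimality 2.2}) and the first discrete equation of (\ref{quasi-optimality 2.3}) against $q_{h}$ gives
\begin{equation*}
(A^{-1}p,q_{h})_{0,\Omega}=-(\mathrm{div}\,q_{h},u)_{0,\Omega}=0,\qquad (A^{-1}p_{h},q_{h})_{0,\Omega}=-(\mathrm{div}\,q_{h},u_{h})_{0,\Omega}=0,
\end{equation*}
and therefore $(A^{-1}(p-p_{h}),\widetilde{p}_{h}-p_{H})_{0,\Omega}=0$. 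Splitting $p_{h}-p_{H}=(p_{h}-\widetilde{p}_{h})+(\widetilde{p}_{h}-p_{H})$ and applying Cauchy--Schwarz together with Lemma \ref{lem 4.4} yields
\begin{equation*}
(A^{-1}(p-p_{h}),p_{h}-p_{H})_{0,\Omega}=(A^{-1}(p-p_{h}),p_{h}-\widetilde{p}_{h})_{0,\Omega}\leq C_{0}^{1/2}\|A^{-1/2}(p-p_{h})\|_{L^{2}(\Omega)}\,\mathrm{osc}(f_{h},\mathcal{T}_{H}),
\end{equation*}
which is (\ref{Lemma 4.5}).

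For (\ref{Lemma 4.5.1}), I expand $\|A^{-1/2}(p-p_{H})\|_{L^{2}(\Omega)}^{2}=\|A^{-1/2}(p-p_{h})\|_{L^{2}(\Omega)}^{2}+2(A^{-1}(p-p_{h}),p_{h}-p_{H})_{0,\Omega}+\|A^{-1/2}(p_{h}-p_{H})\|_{L^{2}(\Omega)}^{2}$, solve for $\|A^{-1/2}(p-p_{h})\|_{L^{2}(\Omega)}^{2}$, apply (\ref{Lemma 4.5}) on the cross term, and absorb via Young's inequality $2ab\leq \delta_{1} a^{2}+b^{2}/\delta_{1}$. This absorbs the $\delta_{1}\|A^{-1/2}(p-p_{h})\|_{L^{2}(\Omega)}^{2}$ to the left side, leaving the constant $C_{0}/\delta_{1}$ in front of $\mathrm{osc}^{2}(f_{h},\mathcal{T}_{H})$. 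Finally, (\ref{Lemma 4.5.2}) is an immediate corollary: if $\mathrm{osc}(f_{h},\mathcal{T}_{H})=0$ then Lemma \ref{lem 4.4} forces $\widetilde{p}_{h}=p_{h}$, so the cross term vanishes and we recover the Pythagorean identity.

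The main conceptual obstacle is precisely the lack of Galerkin orthogonality that motivated the theorem: because $(p_{h},u_{h})$ and $(p_{H},u_{H})$ are solutions of mixed saddle-point problems on two different meshes with two different projected right-hand sides $f_{h}$ and $f_{H}$, one cannot simply test the continuous problem against $p_{h}-p_{H}$ and extract orthogonality. Inserting $\widetilde{p}_{h}$ aligns the discrete constraint (both $\widetilde{p}_{h}$ and $p_{H}$ produce the same divergence $-f_{H}$), which is exactly what makes the key zero-divergence cancellation go through; once that is set up, the rest is Cauchy--Schwarz, Lemma \ref{lem 4.4} and Young.
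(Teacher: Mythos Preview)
Your proof is correct and follows essentially the same route as the paper: introduce the auxiliary solution $(\widetilde{p}_{h},\widetilde{u}_{h})$ of (\ref{quasi-optimality 2.4}), use $\mathrm{div}(\widetilde{p}_{h}-p_{H})=0$ to obtain the orthogonality $(A^{-1}(p-p_{h}),\widetilde{p}_{h}-p_{H})_{0,\Omega}=0$, split $p_{h}-p_{H}$ and apply Cauchy--Schwarz together with Lemma~\ref{lem 4.4} to get (\ref{Lemma 4.5}), then expand the square and use Young's inequality for (\ref{Lemma 4.5.1}). Your justification of (\ref{Lemma 4.5.2}) via $\widetilde{p}_{h}=p_{h}$ when $\mathrm{osc}(f_{h},\mathcal{T}_{H})=0$ is in fact slightly cleaner than the paper's phrasing, but the content is identical.
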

\begin{proof} Let $(\widetilde{p}_{h},\widetilde{u}_{h})$ solve the problem
(\ref{quasi-optimality 2.4}), then we have
\begin{equation}\label{Lemma 4.5.3}
\begin{array}{lll}
(A^{-1}(p-p_{h}),\widetilde{p}_{h}-p_{H})_{0,\Omega}&=&-({\rm
div}(\widetilde{p}_{h}-p_{H}),u-\widetilde{u}_{h})_{0,\Omega}\vspace{2mm}\\
&=&(f_{H}-f_{H},u-\widetilde{u}_{h})_{0,\Omega}=0.
\end{array}
\end{equation}
From the above identity (\ref{Lemma 4.5.3}) and Lemma 
\ref{lem 4.4}, we obtain
\begin{equation}\label{Lemma 4.5.4}
\begin{array}{lll}
(A^{-1}(p-p_{h}),p_{h}-p_{H})_{0,\Omega}
&=&(A^{-1}(p-p_{h}),p_{h}-\widetilde{p}_{h})_{0,\Omega}\vspace{2mm}\\
&\leq&||A^{-1/2}(p-p_{h})||_{L^{2}(\Omega)}
||A^{-1/2}(p_{h}-\widetilde{p}_{h})||_{L^{2}(\Omega)}\vspace{2mm}\\
&\leq&C_{0}^{1/2}||A^{-1/2}(p-p_{h})||_{L^{2}(\Omega)}{\rm
osc}(f_{h},\mathcal{T}_{H}),
\end{array}
\end{equation}
which implies the first result (\ref{Lemma 4.5}).

Furthermore, notice that
\begin{eqnarray}\label{equality}
||A^{-1/2}(p-p_{h})||_{L^{2}(\Omega)}^{2}&=&
||A^{-1/2}(p-p_{H})||_{L^{2}(\Omega)}^{2}-
||A^{-1/2}(p_{h}-p_{H})||_{L^{2}(\Omega)}^{2}\nonumber\\
& & -2(A^{-1}(p-p_{h}),p_{h}-p_{H})_{0,\Omega},
\end{eqnarray}
then for any $\delta_{1}>0$, from 
(\ref{Lemma 4.5.4}) and Young's inequality we have
\begin{equation*}
\begin{array}{lll}
||A^{-1/2}(p-p_{h})||_{L^{2}(\Omega)}^{2}
&\leq&||A^{-1/2}(p-p_{H})||_{L^{2}(\Omega)}^{2}-
||A^{-1/2}(p_{h}-p_{H})||_{L^{2}(\Omega)}^{2}\vspace{2mm}\\
&\ & +\delta_{1}||A^{-1/2}(p-p_{h})||_{L^{2}(\Omega)}^{2}+
\frac{C_{0}}{\delta_{1}}{\rm osc}^{2}(f_{h},\mathcal{T}_{H}),
\end{array}
\end{equation*}
which implies the estimate (\ref{Lemma 4.5.1}). 

In
particular, if ${\rm osc}(f_{h},\mathcal{T}_{H})=0$, then from (\ref{Lemma 4.5.4}) it follows
$(A^{-1}(p-p_{h}),p_{h}-p_{H})_{0,\Omega}=0$.  This, together with (\ref{equality}), yields 
the relation (\ref{Lemma 4.5.2}).
\end{proof}

Although the oscillation of $f_{h}$ over the triangulation
$\mathcal{T}_{H}$ appears in the estimate of quasi-orthogonality, it
is dominated by ${\rm osc}(f,\mathcal{T}_{H})$. We refer to
\cite{Chen;Holst;Xu} for the proof of the following observation.

\begin{lemma}\label{lem 4.6}  Let $f_{h}$ denote the $L^{2}-$projection of
$f$ over $L_{h}$, then it holds
\begin{equation*}
{\rm osc}(f_{h},\mathcal{T}_{H})\leq{\rm osc}(f,\mathcal{T}_{H}).
\end{equation*}
\end{lemma}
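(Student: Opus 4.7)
The plan is to exploit two structural facts that were already exposed in the proof of Lemma \ref{lem 4.4}: first, that $\Pi_{L_H}\Pi_{L_h}f=\Pi_{L_H}f$ because $L_H\subset L_h$, so $\Pi_{L_H}f_h=f_H$; second, that $\Pi_{L_H}f$ itself lies in $L_h$, since $L_H\subset L_h$. Combining these gives the key identity
\begin{equation*}
f_h-f_H=\Pi_{L_h}f-\Pi_{L_H}f=\Pi_{L_h}(f-\Pi_{L_H}f),
\end{equation*}
i.e.\ the quantity being oscillated is literally the $L_h$-projection of the quantity on the right-hand side.

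From here I would work element by element on the coarse mesh. Fix $T\in\mathcal{T}_H$; since $\mathcal{T}_h$ is a refinement of $\mathcal{T}_H$, the element $T$ is partitioned by children $T'\in\mathcal{T}_h$ with $T'\subset T$. Because $\Pi_{L_h}$ is an $L^2$-projection defined element-wise on each $T'$, it is an $L^2$-contraction on every single $T'$, so
\begin{equation*}
\|\Pi_{L_h}(f-f_H)\|_{L^2(T)}^2=\sum_{T'\subset T}\|\Pi_{L_h}(f-f_H)\|_{L^2(T')}^2\leq\sum_{T'\subset T}\|f-f_H\|_{L^2(T')}^2=\|f-f_H\|_{L^2(T)}^2.
\end{equation*}
Multiplying by $H_T^2$ and summing over $T\in\mathcal{T}_H$ then yields
\begin{equation*}
\mathrm{osc}^2(f_h,\mathcal{T}_H)=\sum_{T\in\mathcal{T}_H}H_T^2\|f_h-f_H\|_{L^2(T)}^2\leq\sum_{T\in\mathcal{T}_H}H_T^2\|f-f_H\|_{L^2(T)}^2=\mathrm{osc}^2(f,\mathcal{T}_H),
\end{equation*}
and extracting square roots gives the stated bound.

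There is no real obstacle here; the only subtlety is recognizing that local $L^2$-stability of $\Pi_{L_h}$ must be applied on each fine element $T'\subset T$ rather than on $T$ itself (the projection is not a contraction over a union of elements when one insists on using coarse polynomial degree). Everything else is bookkeeping, and all the ingredients have already appeared in Lemma \ref{lem 4.4}.
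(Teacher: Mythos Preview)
Your argument is correct. The paper does not actually supply its own proof of this lemma; it simply cites Chen--Holst--Xu \cite{Chen;Holst;Xu} for the observation. Your proof is the standard one: the identity $f_h-f_H=\Pi_{L_h}(f-f_H)$ follows from $L_H\subset L_h$, and since $L_h$ consists of \emph{discontinuous} piecewise polynomials the projection $\Pi_{L_h}$ acts element-wise on $\mathcal{T}_h$ and is therefore an $L^2$-contraction on every coarse element $T\in\mathcal{T}_H$ (as a union of fine elements). Nothing is missing.
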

\subsection{Estimator and oscillation reduction}
In this subsection, we aim at reduction of estimator and
oscillation. To this end, we relate the error indicators and
oscillation of two nested triangulations to each other. The link
involves weighted maximum-norms of the inverse matrix, $A^{-1}$, of
coefficient matrix $A$ and its oscillation. 

For a nonnegative integer $m=n-l$, any given triangulation
$\mathcal{T}_{H}$, and $v\in L^{\infty}(\Omega)$, we denote by
$\Pi_{m}^{\infty}v$  the best $L^{\infty}(\Omega)-$approximation of $v$
in the space of piecewise polynomials of degree $\leq m$, and denote
by $\omega_{T}$ the union of elements in $\mathcal{T}_{H}$ sharing a
edge with $T$. We further set $$\Pi_{-1}^{\infty}v:=0,\ \ 
P_{m}^{\infty}v:=(id-\Pi_{m}^{\infty})v,$$
\begin{equation*}
\eta_{\mathcal{T}_{H}}^{2}(A^{-1},T) :=H_{T}^{2}(||{\rm Curl}\
A^{-1}||_{L^{\infty}(T)}^{2}+H_{T}^{-2}||A^{-1}||_{L^{\infty}(\omega_{T})}^{2})\
\ {\rm for\ all\ }\ T\in\mathcal{T}_{H},
\end{equation*}
\begin{equation*}
{\rm osc}_{\mathcal{T}_{H}}^{2}(A^{-1},T)
:=H_{T}^{2}(||P_{m-1}^{\infty}{\rm Curl}\
A^{-1}||_{L^{\infty}(T)}^{2}+
H_{T}^{-2}||P_{m}^{\infty}A^{-1}||_{L^{\infty}(\omega_{T})}^{2}).
\end{equation*}
Noticing that $P_{m}^{\infty}$ is defined elementwise, for any
subset $\mathcal{T}_{H}'\subset\mathcal{T}_{H}$ we finally set
\begin{equation*}
\eta_{\mathcal{T}_{H}}(A^{-1},\mathcal{T}_{H}')
:=\max_{T\in\mathcal{T}_{H}'}\eta_{\mathcal{T}_{H}}(A^{-1},T),\ {\rm
osc}_{\mathcal{T}_{H}}(A^{-1},\mathcal{T}_{H}')
:=\max_{T\in\mathcal{T}_{H}'}{\rm osc}_{\mathcal{T}_{H}}(A^{-1},T).
\end{equation*}

\begin{remark}\label{rem 4.7}({\rm Monotonicity}) The use of best
approximation in $L^{\infty}$ in the definition of
$\eta_{\mathcal{T}_{H}}(A^{-1},\mathcal{T}_{H})$ and ${\rm
osc}_{\mathcal{T}_{H}}(A^{-1},\mathcal{T}_{H})$ implies the
following monotonicity: for any refinement $\mathcal{T}_{h}$ of
$\mathcal{T}_{H}$, it holds
\begin{equation*}
\eta_{\mathcal{T}_{h}}(A^{-1},\mathcal{T}_{h})\leq
\eta_{\mathcal{T}_{H}}(A^{-1},\mathcal{T}_{H})\ \ {\rm and}\ \ {\rm
osc}_{\mathcal{T}_{h}}(A^{-1},\mathcal{T}_{h})\leq{\rm
osc}_{\mathcal{T}_{H}}(A^{-1},\mathcal{T}_{H}).
\end{equation*}
\end{remark}

To avoid any smoothness assumptions on the coefficient matrix of
PDEs, we need to quote a result about implicit interpolation, whose
proof can be found in \cite{Cascon;Kreuzer;Nochetto;Siebert}.

\begin{lemma}\label{lem 4.8}({\rm Implicit interpolation})  Let
$\bar{m}$ and $\bar{n}$ be two nonnegative integer, and $\omega$ be
either one or two dimension simplex. For a positive integer $\iota$
we denote by $\Pi_{\bar{m}}^{2}
:L^{2}(\omega,\mathbb{R}^{\iota})\rightarrow
P_{\bar{m}}(\omega,\mathbb{R}^{\iota})$ the operator of best
$L^{2}-$approximation in $\omega$, and $P_{\bar{m}}^{2}
:=id-\Pi_{\bar{m}}^{2}$. Then for all $v\in
L^{\infty}(\omega,\mathbb{R}^{\iota})$, $V\in
P_{\bar{n}}(\omega,\mathbb{R}^{\iota})$ and $\bar{m}\geq \bar{n}$,
it holds
\begin{equation}\label{Lemma 4.8}
||P_{\bar{m}}^{2}(vV)||_{L^{2}(\omega)}\leq
||P_{\bar{m}-\bar{n}}^{\infty}v||_{L^{\infty}(\omega)}||V||_{L^{2}(\omega)}.
\end{equation}
\end{lemma}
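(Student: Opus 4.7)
The plan is to exploit the defining property of the $L^{2}$-best approximation: $\Pi_{\bar{m}}^{2}(vV)$ minimizes $\|vV-q\|_{L^{2}(\omega)}$ over all $q\in P_{\bar{m}}(\omega,\mathbb{R}^{\iota})$, so any admissible competitor gives an upper bound on $\|P_{\bar{m}}^{2}(vV)\|_{L^{2}(\omega)}$. The natural competitor here is $q:=(\Pi_{\bar{m}-\bar{n}}^{\infty}v)\,V$.

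First, I would verify that this $q$ is actually in $P_{\bar{m}}(\omega,\mathbb{R}^{\iota})$. Since $\Pi_{\bar{m}-\bar{n}}^{\infty}v$ is a (scalar) polynomial of degree at most $\bar{m}-\bar{n}$ on $\omega$ and $V$ is a polynomial of degree at most $\bar{n}$, the product is a polynomial of total degree at most $\bar{m}$; the hypothesis $\bar{m}\geq\bar{n}$ guarantees that $\bar{m}-\bar{n}\geq 0$, so $\Pi_{\bar{m}-\bar{n}}^{\infty}v$ is well-defined.

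Next, invoking the best-approximation property and the identity $v-\Pi_{\bar{m}-\bar{n}}^{\infty}v=P_{\bar{m}-\bar{n}}^{\infty}v$, I would write
\begin{equation*}
\|P_{\bar{m}}^{2}(vV)\|_{L^{2}(\omega)}\leq \|vV-(\Pi_{\bar{m}-\bar{n}}^{\infty}v)V\|_{L^{2}(\omega)}=\|(P_{\bar{m}-\bar{n}}^{\infty}v)\,V\|_{L^{2}(\omega)}.
\end{equation*}
Finally, I would pull the scalar factor out in $L^{\infty}$ by the trivial pointwise estimate $|(P_{\bar{m}-\bar{n}}^{\infty}v)V|\leq \|P_{\bar{m}-\bar{n}}^{\infty}v\|_{L^{\infty}(\omega)}|V|$ and integrate, obtaining the claimed bound
\begin{equation*}
\|(P_{\bar{m}-\bar{n}}^{\infty}v)V\|_{L^{2}(\omega)}\leq\|P_{\bar{m}-\bar{n}}^{\infty}v\|_{L^{\infty}(\omega)}\|V\|_{L^{2}(\omega)}.
\end{equation*}

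There is essentially no obstacle; the only subtlety is making sure the test polynomial has the right total degree, which is exactly what the degree-count $\bar{m}-\bar{n}+\bar{n}=\bar{m}$ ensures. The whole argument is a two-line application of the optimality of $\Pi_{\bar{m}}^{2}$ together with H\"older's inequality.
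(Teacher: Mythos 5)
Your argument is correct and is essentially the standard one: choose the competitor $q=(\Pi_{\bar{m}-\bar{n}}^{\infty}v)\,V\in P_{\bar{m}}(\omega)$, apply the defining optimality of $\Pi_{\bar{m}}^{2}$, and finish with H\"older. The paper itself does not reproduce a proof here (it cites Cascon--Kreuzer--Nochetto--Siebert), and the argument in that reference is the same competitor-plus-H\"older computation you give, so there is no discrepancy.
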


\begin{lemma}\label{lem 4.9} Let $\mathcal{T}_{H}$ be a triangulation.
For all $T\in\mathcal{T}_{H}$ and   any pair of discrete functions
$\sigma_{H}, \tau_{H}\in M_{H}$, there exists a constant
$\bar{\Lambda}_{1}>0$ depending only on the shape regularity of
$\mathcal{T}_{0}$, the polynomial degree $l+1$, and the eigenvalues
of $A^{-1}$, such that
\begin{equation}\label{Lemma 4.9.1}
\eta_{\mathcal{T}_{H}}(\sigma_{H},T)\leq\eta_{\mathcal{T}_{H}}(\tau_{H},T)+
\bar{\Lambda}_{1}\eta_{\mathcal{T}_{H}}(A^{-1},T)||A^{-1/2}(\sigma_{H}-
\tau_{H})||_{L^{2}(\omega_{T})},
\end{equation}
\begin{equation}\label{Lemma 4.9.2}
{\rm osc}_{\mathcal{T}_{H}}(\sigma_{H},T)\leq{\rm
osc}_{\mathcal{T}_{H}}(\tau_{H},T)+\bar{\Lambda}_{1}{\rm
osc}_{\mathcal{T}_{H}}(A^{-1},T)||A^{-1/2}(\sigma_{H}-
\tau_{H})||_{L^{2}(\omega_{T})}.
\end{equation}
\end{lemma}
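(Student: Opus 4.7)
Since both $\eta_{\mathcal{T}_H}(\cdot, T)$ and ${\rm osc}_{\mathcal{T}_H}(\cdot, T)$ are functionals built from $A^{-1} p_H$, the plan is to apply the triangle inequality to each of their constituent terms at $\sigma_H$ and $\tau_H$, and then control the resulting differences in terms of $w := \sigma_H - \tau_H \in M_H$. The data-oscillation piece $\|H(f-f_H)\|_{L^{2}(T)}$ is common to both sides and cancels, so it suffices to bound
$$
H_T \|{\rm curl}(A^{-1}w)\|_{L^{2}(T)} \quad \text{and} \quad H_T^{1/2} \|J(A^{-1}w\cdot\tau)\|_{L^{2}(\partial T)}
$$
by the claimed right-hand side.

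For the curl contribution, I would expand via the product rule, separating the derivatives that hit $A^{-1}$ (producing a term of size $\|{\rm Curl}\, A^{-1}\|_{L^\infty(T)} \|w\|_{L^{2}(T)}$) from the derivatives that hit $w$ (producing $\|A^{-1}\|_{L^\infty(T)} \|\nabla w\|_{L^{2}(T)}$). Since $w \in M_H$ is piecewise polynomial of degree $\leq l+1$, the standard inverse estimate $\|\nabla w\|_{L^{2}(T)} \lesssim H_T^{-1} \|w\|_{L^{2}(T)}$ yields
$$
H_T \|{\rm curl}(A^{-1}w)\|_{L^{2}(T)} \lesssim \eta_{\mathcal{T}_H}(A^{-1}, T)\, \|w\|_{L^{2}(T)}.
$$
Converting $\|w\|_{L^{2}(T)} \leq \|A\|_{L^\infty(T)}^{1/2} \|A^{-1/2} w\|_{L^{2}(T)}$ costs a factor controlled by the eigenvalues of $A^{-1}$, which is absorbed into $\bar\Lambda_1$.

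The jump term is handled by first invoking the scaled trace inequality $\|v\|_{L^{2}(E)}^{2} \lesssim H_T^{-1} \|v\|_{L^{2}(T_\pm)}^{2} + H_T \|\nabla v\|_{L^{2}(T_\pm)}^{2}$ on each triangle adjacent to $E \subset \partial T$, applied to $v = A^{-1} w \cdot \tau$, and then repeating the product-rule plus inverse-estimate argument above. The patch $\omega_T$ in (\ref{Lemma 4.9.1}) arises precisely because the trace on $\partial T$ samples the two triangles sharing each edge. Summing over the edges of $T$ delivers (\ref{Lemma 4.9.1}).

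The argument for the oscillation inequality (\ref{Lemma 4.9.2}) follows the same outline, except that Lemma \ref{lem 4.8} on implicit interpolation now replaces the raw $L^\infty$-factors of $A^{-1}$ and ${\rm Curl}\, A^{-1}$ by their best polynomial remainders. Applying Lemma \ref{lem 4.8} to the two summands of ${\rm curl}(A^{-1}w)$ with $\bar n = l$ (for $\nabla w$) and $\bar n = l+1$ (for $w$ itself), and $\bar m = n$, produces factors $\|P_m^\infty A^{-1}\|_{L^\infty}$ and $\|P_{m-1}^\infty {\rm Curl}\, A^{-1}\|_{L^\infty}$ with $m=n-l$, which is exactly the content of ${\rm osc}_{\mathcal{T}_H}(A^{-1}, T)$. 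The analogous step with $\bar m = n+1$ handles the jump contribution involving $P_{n+1}^2$. The main obstacle is the jump term, where one must control the edge trace of the product $A^{-1}w$ while $A^{-1}$ is only assumed $L^\infty$; this is overcome by splitting $A^{-1}$ into its best piecewise polynomial $L^\infty$-approximation plus a remainder, the former handled by a standard polynomial trace/inverse argument and the latter absorbed into the oscillation via Lemma \ref{lem 4.8}.
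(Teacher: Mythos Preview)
Your overall plan---triangle inequality, product-rule decomposition of ${\rm curl}(A^{-1}w)$, inverse estimate on the polynomial $w$, and Lemma~\ref{lem 4.8} to upgrade the coefficient factors to their oscillation remainders---matches the paper's proof for the interior (curl) contribution essentially line by line.

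The difference is in the jump term. For (\ref{Lemma 4.9.1}) you invoke the scaled $H^1$ trace inequality on the \emph{product} $A^{-1}w\cdot\tau$; this forces you to estimate $\|\nabla(A^{-1}w)\|_{L^{2}(T_\pm)}$, and the product rule then produces $\|\nabla A^{-1}\|_{L^\infty}$. But $\eta_{\mathcal{T}_H}(A^{-1},T)$ contains only $\|{\rm Curl}\,A^{-1}\|_{L^\infty(T)}$ and $\|A^{-1}\|_{L^\infty(\omega_T)}$, not the full gradient of $A^{-1}$, so your argument proves a variant of (\ref{Lemma 4.9.1}) with a slightly larger coefficient factor than the one stated. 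The paper avoids this by never differentiating $A^{-1}$ in the jump estimate: on each edge $E$ it writes $J(A^{-1}q_H\cdot\tau)=((A^{-1}q_H)|_{T}-(A^{-1}q_H)|_{T'})\cdot\tau$, pulls out $\|A^{-1}\|_{L^\infty}$ (or, for (\ref{Lemma 4.9.2}), applies Lemma~\ref{lem 4.8} with $\omega=E$ as a one-dimensional simplex), and then uses the \emph{polynomial} inverse trace estimate $\|q_H\|_{L^{2}(E)}\lesssim H_T^{-1/2}\|q_H\|_{L^{2}(T)}$ on the discrete function alone. This is the cleaner route and delivers exactly the factor $\|A^{-1}\|_{L^\infty(\omega_T)}$ (respectively $\|P_m^\infty A^{-1}\|_{L^\infty(\omega_T)}$) present in $\eta_{\mathcal{T}_H}(A^{-1},T)$ and ${\rm osc}_{\mathcal{T}_H}(A^{-1},T)$.

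For (\ref{Lemma 4.9.2}) your splitting $A^{-1}=\Pi_m^\infty A^{-1}+P_m^\infty A^{-1}$ is morally the same device as applying Lemma~\ref{lem 4.8} on the edge, but note that once the polynomial part drops out you are left with $\|(P_m^\infty A^{-1})w\|_{L^{2}(E)}$ and must still pass from an edge norm to an element norm; at that point the natural step is again the polynomial trace inequality applied to $w$ after extracting $\|P_m^\infty A^{-1}\|_{L^\infty}$, which is precisely the paper's move. So your outline is correct, but the jump step should be executed via the polynomial inverse trace on $w$ rather than the $H^1$ trace on $A^{-1}w$.
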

\begin{proof} We  only prove the second estimate (\ref{Lemma 4.9.2}),
since the first one (\ref{Lemma 4.9.1}) is somewhat simpler and can
be derived similarly. We denote by $L^{2}(\Gamma_{H})$ the square
integrable function spaces on $\Gamma_{H}
:=\bigcup\varepsilon_{H}$. The jump of the tangential component
defines a linear mapping $J : M_{H}\rightarrow L^{2}_{\Gamma_{H}}$
by $J(q_{H})=J(A^{-1}q_{H}\cdot\tau)$ for all $q_{H}\in M_{H}$ from
$M_{H}$ into $L^{2}_{\Gamma_{H}}$ . Recalling 
$P_{n}^{2}=id-\Pi_{n}^{2}$ with $\Pi_{n}^{2}$ being the $L^{2}-$projection,   denoting $q_{H}:=\sigma_{H}-\tau_{H}$ and using the triangle
inequality, we have
\begin{equation}\label{Lemma 4.9.3}
\begin{array}{lll}
{\rm osc}_{\mathcal{T}_{H}}(\sigma_{H},T)&\leq&{\rm
osc}_{\mathcal{T}_{H}}(\tau_{H},T)+H_{T}||P_{n}^{2}{\rm
curl}(A^{-1}q_{H})||_{L^{2}(T)}\vspace{2mm}\\
&\ &
+H_{T}^{1/2}||P_{n+1}^{2}J(A^{-1}q_{H}\cdot\tau)||_{L^{2}(\partial
T)}.
\end{array}
\end{equation}
  We split the ${\rm curl}$ term as
\begin{equation*}
{\rm curl}(A^{-1}q_{H})={\rm Curl}\ A^{-1}\cdot
q_{H}+A^{-1}:\widetilde{{\rm curl}}\ q_{H},
\end{equation*}
where ${\rm Curl}\ A^{-1}$ is a vector whose every component is the
${\rm curl}$ of the corresponding column vector of $A^{-1}$, and
$\widetilde{{\rm curl}}\ q_{H}$ is a matrix whose column vector is
the ${\rm Curl}$ of the corresponding vector of $q_{H}$. Invoking
Lemma \ref{lem 4.8} with $\omega=T$ and noticing that polynomial degree of
$q_{H}$ is $l+1$, we infer for the first term that
\begin{equation}\label{Lemma 4.9.4}
||P_{n}^{2}({\rm curl}\ A^{-1}\cdot
q_{H})||_{L^{2}(T)}\lesssim||P_{n-l-1}^{\infty}{\rm curl}\
A^{-1}||_{L^{\infty}(T)}||A^{-1/2}q_{H}||_{L^{2}(T)}.
\end{equation}

Since $\widetilde{{\rm curl}}\ q_{H}$ is a polynomial of degree
$\leq l$,  applying (\ref{Lemma 4.8}) again in conjunction with an
inverse inequality, we obtain for the second term that
\begin{equation}\label{Lemma 4.9.5}
\begin{array}{lll}
||P_{n}^{2}(A^{-1}:\widetilde{{\rm curl}}\
q_{H})||_{L^{2}(T)}&\leq&||P_{n-l}^{\infty}A^{-1}||_{L^{\infty}(T)}||\widetilde{{\rm
curl}}\ q_{H}||_{L^{2}(T)}\vspace{2mm}\\
&\leq&||P_{n-l}^{\infty}A^{-1}||_{L^{\infty}(T)}|q_{h}|_{H^{1}(T)}\vspace{2mm}\\
&\lesssim&
H_{T}^{-1}||P_{n-l}^{\infty}A^{-1}||_{L^{\infty}(T)}||A^{-1/2}q_{H}||_{L^{2}(T)}.
\end{array}
\end{equation}

We now deal with the jump residual. Let $T'\in\mathcal{T}_{H}$ share
an interior edge $E$ with $T$. We write
$J(A^{-1}q_{H}\cdot\tau)=((A^{-1}q_{H})|_{T}-(A^{-1}q_{H})|_{T'})\cdot\tau$
and use the linearity of $\Pi_{n+1}^{2}$, Lemma \ref{lem 4.8} with $\omega=E$,
and the inverse inequality $||q_{H}||_{L^{2}(E)}\lesssim
H_{T}^{-1/2}||q_{H}||_{L^{2}(T)}$ to deduce that
\begin{equation}\label{Lemma 4.9.6}
\begin{array}{lll}
&\ &||P_{n+1}^{2}((A^{-1}q_{H})|_{T}\cdot\tau)||_{L^{2}(E)}\vspace{2mm}\\
&\ &=||(P_{n+1}^{2}(A^{-1}q_{H}|_{T}))\cdot\tau||_{L^{2}(E)}\leq
||P_{n+1}^{2}(A^{-1}q_{H}|_{T})||_{L^{2}(E)}\vspace{2mm}\\
&\
&\leq||P_{n-l}^{\infty}A^{-1}|_{T}||_{L^{\infty}(E)}||q_{H}||_{L^{2}(E)}\lesssim
H_{T}^{-1/2}||P_{n-l}^{\infty}A^{-1}||_{L^{\infty}(T)}||q_{H}||_{L^{2}(T)}.
\end{array}
\end{equation}

Since $\mathcal{T}_{H}$ is shape-regular, we can replace $H_{T}'$ by
$H_{T}$, a similar argument leads to
\begin{equation}\label{Lemma 4.9.7}
||P_{n+1}^{2}((A^{-1}q_{H})|_{T'}\cdot\tau)||_{L^{2}(E)}\lesssim
H_{T}^{-1/2}||P_{n-l}^{\infty}A^{-1}||_{L^{\infty}(T')}||q_{H}||_{L^{2}(T')}.
\end{equation}
A combination of (\ref{Lemma 4.9.6}) and (\ref{Lemma 4.9.7}) yields
\begin{equation}\label{Lemma 4.9.8}
\begin{array}{lll}
&&||P_{n+1}^{2}J(A^{-1}q_{H}\cdot\tau)||_{L^{2}(E)}\vspace{2mm}\\
&\ & =||P_{n+1}^{2}(((A^{-1}q_{H})|_{T}-(A^{-1}q_{H})|_{T'})\cdot\tau)||_{L^{2}(E)}\vspace{2mm}\\
&\ & \leq||P_{n+1}^{2}((A^{-1}q_{H})|_{T}\cdot\tau)||_{L^{2}(E)}+
||P_{n+1}^{2}((A^{-1}q_{H})|_{T'}\cdot\tau)||_{L^{2}(E)}\vspace{2mm}\\
&\ & \lesssim
H_{T}^{-1/2}||P_{n-l}^{\infty}A^{-1}||_{L^{\infty}(\omega_{E})}
||A^{-1/2}q_{H}||_{L^{2}(\omega_{E})}.
\end{array}
\end{equation}
By summing over all edges of element $T$, from the above inequality
(\ref{Lemma 4.9.8}), we get
\begin{equation}\label{Lemma 4.9.9}
||P_{n+1}^{2}J(A^{-1}q_{H}\cdot\tau)||_{L^{2}(\partial T)}\lesssim
H_{T}^{-1/2}||P_{n-l}^{\infty}A^{-1}||_{L^{\infty}(\omega_{T})}
||A^{-1/2}q_{H}||_{L^{2}(\omega_{T})}.
\end{equation}
Finally, the desired result (\ref{Lemma 4.9.2}) follows from   (\ref{Lemma 4.9.3})-(\ref{Lemma 4.9.5}) and
(\ref{Lemma 4.9.9}).
\end{proof}

The following two corollaries are global forms of the above lemma.

\begin{corollary}\label{cor 4.10}({\rm Estimator reduction})   For a
triangulation $\mathcal{T}_{H}$  with
$\mathcal{M}_{H}\subset\mathcal{T}_{H}$, let $\mathcal{T}_{h}$ be a
refinement of $\mathcal{T}_{H}$ obtained by $\mathcal{T}_{h}
:=REFINE(\mathcal{T}_{H},\mathcal{M}_{H})$.  Denote $\Lambda_{1}
:=3\bar{\Lambda}_{1}^{2}$ with $\bar{\Lambda}_{1}$ given in Lemma \ref{lem 4.9}, and $\lambda :=1-2^{-b/2}>0$ with $b$ given in
Section 2.7. Then it holds
\begin{equation}\label{Lemma 4.10}
\begin{array}{lll}
\eta_{\mathcal{T}_{h}}^{2}(\sigma_{h},\mathcal{T}_{h})&\leq&(1+\delta_{3})
\{\eta_{\mathcal{T}_{H}}^{2}(\sigma_{H},\mathcal{T}_{H})-
\lambda\eta_{\mathcal{T}_{H}}^{2}(\sigma_{H},\mathcal{M}_{H})\}\vspace{2mm}\\
&\ &
+(1+\delta_{3}^{-1})\Lambda_{1}\eta_{\mathcal{T}_{0}}^{2}(A^{-1},\mathcal{T}_{0})
||A^{-1/2}(\sigma_{H}-\sigma_{h})||_{L^{2}(\Omega)}^{2}
\end{array}
\end{equation}
for all $\sigma_{H}\in M_H, \sigma_{h}\in M_{h}$ and any
$\delta_{3}>0$.
\end{corollary}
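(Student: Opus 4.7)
The plan is to combine a local perturbation estimate from Lemma \ref{lem 4.9} with a careful per-element reduction of $\eta_{\mathcal{T}_h}(\sigma_H,\cdot)$ on elements that get refined. First, I apply Lemma \ref{lem 4.9} on the fine mesh $\mathcal{T}_h$ (which makes sense since $\sigma_H\in M_H\subset M_h$) to each $T\in\mathcal{T}_h$, obtaining
\[
\eta_{\mathcal{T}_h}(\sigma_h,T)\le \eta_{\mathcal{T}_h}(\sigma_H,T)+\bar{\Lambda}_{1}\,\eta_{\mathcal{T}_h}(A^{-1},T)\,\|A^{-1/2}(\sigma_H-\sigma_h)\|_{L^{2}(\omega_T)}.
\]
Squaring this with Young's inequality $(a+b)^{2}\le(1+\delta_{3})a^{2}+(1+\delta_{3}^{-1})b^{2}$, summing over $T\in\mathcal{T}_h$, using that each patch $\omega_T$ for edge-sharing neighbors covers at most three triangles (so $\sum_T\|\cdot\|_{L^{2}(\omega_T)}^{2}\le 3\|\cdot\|_{L^{2}(\Omega)}^{2}$), pulling the maximum out through $\eta_{\mathcal{T}_h}(A^{-1},T)\le\eta_{\mathcal{T}_h}(A^{-1},\mathcal{T}_h)$, and finally invoking the monotonicity from Remark \ref{rem 4.7} to replace this by $\eta_{\mathcal{T}_0}(A^{-1},\mathcal{T}_0)$, yields the perturbation piece $(1+\delta_{3}^{-1})\Lambda_{1}\eta_{\mathcal{T}_0}^{2}(A^{-1},\mathcal{T}_0)\|A^{-1/2}(\sigma_H-\sigma_h)\|_{L^{2}(\Omega)}^{2}$ with $\Lambda_{1}=3\bar{\Lambda}_{1}^{2}$, as required.

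It remains to control the leading sum $\sum_{T\in\mathcal{T}_h}\eta_{\mathcal{T}_h}^{2}(\sigma_H,T)$ by $\eta_{\mathcal{T}_H}^{2}(\sigma_H,\mathcal{T}_H)-\lambda\,\eta_{\mathcal{T}_H}^{2}(\sigma_H,\mathcal{M}_H)$. I split $\mathcal{T}_h=(\mathcal{T}_H\cap\mathcal{T}_h)\cup\mathcal{T}_{h,*}$, where $\mathcal{T}_{h,*}$ collects the descendants of elements in $\mathcal{R}=\mathcal{T}_H\setminus(\mathcal{T}_H\cap\mathcal{T}_h)$. On elements unchanged by refinement, Remark \ref{rem 2.1} gives $\eta_{\mathcal{T}_h}(\sigma_H,T)=\eta_{\mathcal{T}_H}(\sigma_H,T)$. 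On a refined element $T\in\mathcal{R}$ with children $T'\subset T$ in $\mathcal{T}_h$, I examine the three ingredients of $\eta_{\mathcal{T}_h}^{2}(\sigma_H,T')$ separately:
\[
\eta_{\mathcal{T}_h}^{2}(\sigma_H,T')=h_{T'}^{2}\|f-f_h\|_{L^{2}(T')}^{2}+h_{T'}^{2}\|\operatorname{curl}(A^{-1}\sigma_H)\|_{L^{2}(T')}^{2}+h_{T'}\|J(A^{-1}\sigma_H\cdot\tau)\|_{L^{2}(\partial T')}^{2}.
\]
Because the children arise from at least one (and for $T\in\mathcal{M}_H$, at least $b$) bisections, $h_{T'}\le 2^{-b/2}h_T$ when $T\in\mathcal{M}_H$, while $A^{-1}\sigma_H$ is polynomial on $T$ so its tangential jump vanishes across edges interior to $T$, and the sum over the sub-edges of $\partial T$ reproduces $\|J\|_{L^{2}(\partial T)}^{2}$. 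The first two terms then scale by $2^{-b}$ (for the $f$ term using additionally that $f_h$ is a better $L^{2}$-approximation than $f_H$), while the jump term scales only by $2^{-b/2}$, which is the binding reduction factor. Hence $\sum_{T'\subset T}\eta_{\mathcal{T}_h}^{2}(\sigma_H,T')\le 2^{-b/2}\eta_{\mathcal{T}_H}^{2}(\sigma_H,T)$ for every $T\in\mathcal{M}_H$; for $T\in\mathcal{R}\setminus\mathcal{M}_H$ the monotonicity part of Remark \ref{rem 2.1} gives the trivial bound $\sum_{T'\subset T}\eta_{\mathcal{T}_h}^{2}(\sigma_H,T')\le\eta_{\mathcal{T}_H}^{2}(\sigma_H,T)$.

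Assembling these pieces and using $\eta_{\mathcal{T}_H}^{2}(\sigma_H,\mathcal{T}_H)=\eta_{\mathcal{T}_H}^{2}(\sigma_H,\mathcal{T}_H\setminus\mathcal{M}_H)+\eta_{\mathcal{T}_H}^{2}(\sigma_H,\mathcal{M}_H)$ delivers
\[
\eta_{\mathcal{T}_h}^{2}(\sigma_H,\mathcal{T}_h)\le \eta_{\mathcal{T}_H}^{2}(\sigma_H,\mathcal{T}_H)-(1-2^{-b/2})\,\eta_{\mathcal{T}_H}^{2}(\sigma_H,\mathcal{M}_H),
\]
which is exactly the principal term in (\ref{Lemma 4.10}) with $\lambda=1-2^{-b/2}$. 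Combining with the Young step above finishes the proof. The principal technical step is the bookkeeping on marked elements: the jump contribution only contracts by $2^{-b/2}$ rather than $2^{-b}$, so the conservative factor $\lambda=1-2^{-b/2}$ is unavoidable, and tracking the three terms (especially ensuring the interior sub-edges of $T$ genuinely contribute zero jump because $A^{-1}\sigma_H$ is a polynomial) is where care is required.
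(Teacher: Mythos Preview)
Your approach is essentially the same as the paper's: apply Lemma~\ref{lem 4.9} on $\mathcal{T}_h$, square with Young's inequality, sum and absorb the overlap factor $3$ into $\Lambda_1$, use Remark~\ref{rem 4.7} for the coefficient monotonicity, and then establish the reduction $\eta_{\mathcal{T}_h}^{2}(\sigma_H,\mathcal{T}_h)\le \eta_{\mathcal{T}_H}^{2}(\sigma_H,\mathcal{T}_H)-\lambda\,\eta_{\mathcal{T}_H}^{2}(\sigma_H,\mathcal{M}_H)$ by tracking the three indicator terms on marked elements.

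There is, however, one slip in your justification. You twice assert that ``$A^{-1}\sigma_H$ is a polynomial on $T$'' and use this to conclude the tangential jump vanishes on the interior sub-edges of $T$. This is not true in general: the whole thrust of the paper is to \emph{avoid} assuming that $A^{-1}$ (or $A^{-1}p_h$) is piecewise polynomial. The correct reason, and the one the paper uses, is that $A^{-1}$ has jumps only across edges of the initial mesh $\mathcal{T}_0$; since every $T\in\mathcal{T}_H$ lies inside a single element of $\mathcal{T}_0$, the function $A^{-1}\sigma_H$ is continuous in the interior of $T$ and therefore $J(A^{-1}\sigma_H\cdot\tau)=0$ on any edge of $\mathcal{T}_{h}$ lying in the interior of $T$. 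Your conclusion is right, but the stated reason would only be valid under the very hypothesis the paper is designed to drop. With this correction the argument is complete and matches the paper's proof.
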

\begin{proof} For $T\in\mathcal{T}_{h}$,  applying the first estimate (\ref{Lemma 4.9.1}) of Lemma \ref{lem 4.9}
with $\sigma_{H},\sigma_{h}\in M_{h}$ and using Young's inequality
with parameter $\delta_{3}>0$, we derive
\begin{equation}\label{Lemma 4.10.1}
\begin{array}{lll}
\eta_{\mathcal{T}_{h}}^{2}(\sigma_{h},T)&\leq&(1+\delta_{3})
\eta_{\mathcal{T}_{h}}^{2}(\sigma_{H},T)\vspace{2mm}\\
&\ & +(1+\delta_{3}^{-1})\bar{\Lambda}_{1}^{2}
\eta_{\mathcal{T}_{h}}^{2}(A^{-1},T)
||A^{-1/2}(\sigma_{H}-\sigma_{h})||_{L^{2}(\omega_{T})}^{2}.
\end{array}
\end{equation}

By summing over all elements $T\in\mathcal{T}_{h}$ and using the finite
overlap of patches $\omega_{T}$,  the above inequality
(\ref{Lemma 4.10.1}) indicates 
\begin{equation}\label{Lemma 4.10.2}
\begin{array}{lll}
\eta_{\mathcal{T}_{h}}^{2}(\sigma_{h},\mathcal{T}_{h})&\leq&(1+\delta_{3})
\eta_{\mathcal{T}_{h}}^{2}(\sigma_{H},\mathcal{T}_{h})\vspace{2mm}\\
&\ & +(1+\delta_{3}^{-1})3\bar{\Lambda}_{1}^{2}
\eta_{\mathcal{T}_{h}}^{2}(A^{-1},\mathcal{T}_{h})
||A^{-1/2}(\sigma_{H}-\sigma_{h})||_{L^{2}(\Omega)}^{2}.
\end{array}
\end{equation}

For a marked element $T\in\mathcal{M}_{H}$, we set
$\mathcal{T}_{h,T} :=\{T'\in\mathcal{T}_{h}|T'\subset T\}$. Since
$\sigma_{H}\in M_{H}$ and $A^{-1}$ jumps only across edges of
$\mathcal{T}_{0}$, we have $J(A^{-1}\sigma_{H}\cdot\tau)=0$ on
edges of $\mathcal{T}_{h,T}$ in the interior of $T$. Notice that
$||f-f_{h}||_{L^{2}(T')}\leq||f-f_{H}||_{L^{2}(T')}$, we then obtain
\begin{equation}\label{Lemma 4.10.3}
\sum\limits_{T'\in\mathcal{T}_{h,T}}\eta_{\mathcal{T}_{h}}^{2}(\sigma_{H},T')\leq
2^{-b/2}\eta_{\mathcal{T}_{H}}^{2}(\sigma_{H},T),
\end{equation}
since refinement by bisection implies 
$$h_{T'}=|T'|^{1/2}\leq(2^{-b}|T|)^{1/2}\leq2^{-b/2}H_{T}\ \ \text{ for all }
T'\in\mathcal{T}_{h,T}.$$ 
On the other hand, for an element
$T\in\mathcal{T}_{H}\setminus\mathcal{M}_{H}$,   Remark 
ef{rem 2.1}
yields
$$\eta_{\mathcal{T}_{h}}(\sigma_{H},T)\leq\eta_{\mathcal{T}_{H}}(\sigma_{H},T).$$
Hence, from (\ref{Lemma 4.10.3}) and the above inequality,  by summing over all
$T\in\mathcal{T}_{h}$ we arrive at
\begin{equation}\label{Lemma 4.10.4}
\begin{array}{lll}
\eta_{\mathcal{T}_{h}}^{2}(\sigma_{H},\mathcal{T}_{h})&\leq&2^{-b/2}
\eta_{\mathcal{T}_{H}}^{2}(\sigma_{H},\mathcal{M}_{H})+
\eta_{\mathcal{T}_{H}}^{2}(\sigma_{H},\mathcal{T}_{H}\setminus\mathcal{M}_{H})\vspace{2mm}\\
&=&\eta_{\mathcal{T}_{H}}^{2}(\sigma_{H},\mathcal{T}_{H})-
\lambda\eta_{\mathcal{T}_{H}}^{2}(\sigma_{H},\mathcal{M}_{H}).
\end{array}
\end{equation}

From (\ref{Lemma 4.10.2}), (\ref{Lemma 4.10.4}), and the
monotonicity $\eta_{\mathcal{T}_{h}}(A^{-1},\mathcal{T}_{h})\leq
\eta_{\mathcal{T}_{0}}(A^{-1},\mathcal{T}_{0})$ stated in Remark
\ref{rem 4.7}, we get the desired result (\ref{Lemma 4.10}).
\end{proof}

\begin{corollary}\label{cor 4.11}({\rm Perturbation of oscillation}) 
Let $\mathcal{T}_{h}$ be a refinement of $\mathcal{T}_{H}$, and let
$\Lambda_{1}$ be the same as in Corollary \ref{cor 4.10}. Then for
all $\sigma_{H}\in M_h,\ \sigma_{h}\in M_{h}$,  it holds
\begin{equation*}
\begin{array}{lll}
{\rm
osc}_{\mathcal{T}_{H}}^{2}(\sigma_{H},\mathcal{T}_{H}\cap\mathcal{T}_{h})&\leq&2
{\rm
osc}_{\mathcal{T}_{h}}^{2}(\sigma_{h},\mathcal{T}_{H}\cap\mathcal{T}_{h})\vspace{2mm}\\
&\ & +2\Lambda_{1}{\rm
osc}_{\mathcal{T}_{0}}^{2}(A^{-1},\mathcal{T}_{0})
||A^{-1/2}(\sigma_{h}-\sigma_{H})||_{L^{2}(\Omega)}^{2}.
\end{array}
\end{equation*}
\end{corollary}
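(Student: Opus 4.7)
The plan is to localize to each shared element $T \in \mathcal{T}_H \cap \mathcal{T}_h$, invoke the locality from Remark \ref{rem 2.1} to replace ${\rm osc}_{\mathcal{T}_H}$ by ${\rm osc}_{\mathcal{T}_h}$, then apply the Lipschitz-type bound (\ref{Lemma 4.9.2}) from Lemma \ref{lem 4.9} on the finer mesh $\mathcal{T}_h$, and finally sum and use the monotonicity of the $A^{-1}$-oscillation from Remark \ref{rem 4.7}. This mirrors the proof of the estimator reduction in Corollary \ref{cor 4.10}, only simpler because no refinement-induced gain factor needs to be tracked.

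Concretely, I would first fix $T \in \mathcal{T}_H \cap \mathcal{T}_h$. By the full locality of ${\rm osc}_{\mathcal{T}_H}(\cdot,T)$ on shared elements (Remark \ref{rem 2.1}) one has ${\rm osc}_{\mathcal{T}_H}(\sigma_H,T) = {\rm osc}_{\mathcal{T}_h}(\sigma_H,T)$. Viewing both $\sigma_H$ and $\sigma_h$ as elements of $M_h$ (using $M_H \subset M_h$) and applying (\ref{Lemma 4.9.2}) on the mesh $\mathcal{T}_h$ yields
\begin{equation*}
{\rm osc}_{\mathcal{T}_h}(\sigma_H,T) \leq {\rm osc}_{\mathcal{T}_h}(\sigma_h,T) + \bar{\Lambda}_1\, {\rm osc}_{\mathcal{T}_h}(A^{-1},T)\, \|A^{-1/2}(\sigma_h - \sigma_H)\|_{L^2(\omega_T)}.
\end{equation*}
Squaring via $(a+b)^2 \leq 2a^2 + 2b^2$ gives a local inequality of the desired shape.

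Second, I would sum over $T \in \mathcal{T}_H \cap \mathcal{T}_h$. The first squared term collects into $2\,{\rm osc}_{\mathcal{T}_h}^2(\sigma_h, \mathcal{T}_H \cap \mathcal{T}_h)$. For the second, I would bound $\bar{\Lambda}_1^2\, {\rm osc}_{\mathcal{T}_h}^2(A^{-1},T) \leq \bar{\Lambda}_1^2\, {\rm osc}_{\mathcal{T}_h}^2(A^{-1},\mathcal{T}_h)$ (an $L^\infty$-type max) and use the finite overlap of the patches $\{\omega_T\}$ on a shape-regular triangulation, which contributes a factor at most $3$ when summing the $\|A^{-1/2}(\sigma_h-\sigma_H)\|_{L^2(\omega_T)}^2$. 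This produces the constant $2\Lambda_1 = 6\bar{\Lambda}_1^2$ consistent with the definition $\Lambda_1 := 3\bar{\Lambda}_1^2$ given in Corollary \ref{cor 4.10}. Finally, the monotonicity ${\rm osc}_{\mathcal{T}_h}(A^{-1}, \mathcal{T}_h) \leq {\rm osc}_{\mathcal{T}_0}(A^{-1}, \mathcal{T}_0)$ from Remark \ref{rem 4.7} converts the $\mathcal{T}_h$-indicator to the $\mathcal{T}_0$-indicator appearing in the claim.

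I do not expect a real obstacle; the argument is essentially a one-line application of Lemma \ref{lem 4.9} followed by summation. The only subtlety is the restriction of the left-hand side to $\mathcal{T}_H \cap \mathcal{T}_h$: the locality identity from Remark \ref{rem 2.1} is exactly what fails on refined elements $T \in \mathcal{T}_H \setminus \mathcal{T}_h$ (the jump and the data oscillation $\|h(f-f_h)\|$ need not agree there), so the statement is restricted precisely to those elements where the transfer between the two meshes is cost-free.
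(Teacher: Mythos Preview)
Your proposal is correct and follows essentially the same route as the paper: the paper likewise invokes Remark~\ref{rem 2.1} to pass from ${\rm osc}_{\mathcal{T}_H}(\sigma_H,T)$ to ${\rm osc}_{\mathcal{T}_h}(\sigma_H,T)$ on shared elements, applies (\ref{Lemma 4.9.2}) with Young's inequality, sums over $T\in\mathcal{T}_H\cap\mathcal{T}_h$ using the finite patch overlap, and finishes with the monotonicity from Remark~\ref{rem 4.7}. Your write-up is in fact slightly more explicit about the transition from the local quantity ${\rm osc}_{\mathcal{T}_h}(A^{-1},T)$ to the global max and about the origin of the factor $\Lambda_1=3\bar\Lambda_1^2$.
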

\begin{proof}
Remark 
\ref{rem 2.1} yields ${\rm osc}_{\mathcal{T}_{H}}(\sigma_{H},T)={\rm
osc}_{\mathcal{T}_{h}}(\sigma_{H},T)$ for all
$T\in\mathcal{T}_{H}\cap\mathcal{T}_{h}$. Hence, by the
estimate (\ref{Lemma 4.9.2})  and Young's inequality, we get
\begin{equation}\label{Lemma 4.11}
{\rm osc}_{\mathcal{T}_{H}}^{2}(\sigma_{H},T)\leq2 {\rm
osc}_{\mathcal{T}_{h}}^{2}(\sigma_{h},T)+2\bar{\Lambda}_{1}^{2}{\rm
osc}_{\mathcal{T}_{h}}^{2}(A^{-1},\mathcal{T}_{h})
||A^{-1/2}(\sigma_{h}-\sigma_{H})||_{L^{2}(\omega_{T})}^{2}.
\end{equation}

By summing over $T\in\mathcal{T}_{H}\cap\mathcal{T}_{h}$  and using
the monotonicity property ${\rm
osc}_{\mathcal{T}_{h}}(A^{-1},\\\mathcal{T}_{h})\leq{\rm
osc}_{\mathcal{T}_{0}}(A^{-1},\mathcal{T}_{0})$ stated in Remark \ref{rem
4.7}, the  inequality (\ref{Lemma 4.11}) indicates the desired
assertion.
\end{proof}

\section{Convergence for the AMFEM}
We shall prove in this section that the so-called quasi-error,  i.e., the sum of the
stress variable error plus the scaled estimator, uniformly reduces with a fixed rate on
two successive meshes, up to an oscillation term of $f$. This means the AMFEM is a contraction
with respect to the quasi-error. To this
end, subsequently we replace the subscripts $H, h$ respectively with  iteration
counters   $k, k+1$, 
and denote by $$\eta_{k}
:=\eta_{\mathcal{T}_{k}}(p_{k},\mathcal{T}_{k})$$ the scaled
estimator over the whole mesh $\mathcal{T}_{k}$.

\begin{theorem}\label{thm 5.1}({\rm Contraction property})  Given
$\theta\in(0,1]$, let
$\{\mathcal{T}_{k};({M}_{k},L_{k});(p_{k},\\u_{k})\}_{k\geq0}$ be
the sequence of meshes,a pair of finite element spaces, and discrete
solutions produced by the AMFEM. Then there exits constants
$\gamma>0$, $0<\alpha<1$, and $C>0$ depending solely on the
shape-regularity of $\mathcal{T}_{0}$, $b$,
$\eta_{\mathcal{T}_{0}(A^{-1},\mathcal{T}_{0})}$, and the marking
parameter $\theta$, such that
\begin{equation}\label{Theorem 5.1}
\mathcal{E}_{k+1}^{2}+\gamma\eta_{k+1}^{2}\leq\alpha^{2}(\mathcal{E}_{k}^{2}+\gamma\eta_{k}^{2})+
C{\rm osc}^{2}(f,\mathcal{T}_{k}).
\end{equation}
\end{theorem}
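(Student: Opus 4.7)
The plan is to combine the quasi-orthogonality of Theorem \ref{thm 4.5} with the estimator reduction of Corollary \ref{cor 4.10}, balanced by a scaling parameter $\gamma$ and closed off by the reliability bound from Section 2.4. A useful preliminary observation is that from (\ref{quasi-optimality 2.1}) and (\ref{quasi-optimality 2.3}), together with $\mathrm{div}\,M_{k}=L_{k}$, one has $\mathrm{div}\,p=-f$ and $\mathrm{div}\,p_{k}=-f_{k}$, so
\[
\|h_{k}\mathrm{div}(p-p_{k})\|_{L^{2}(\Omega)}^{2}=\|h_{k}(f-f_{k})\|_{L^{2}(\Omega)}^{2}=\mathrm{osc}^{2}(f,\mathcal{T}_{k}),
\]
and hence $\mathcal{E}_{k}^{2}=\|A^{-1/2}(p-p_{k})\|_{L^{2}(\Omega)}^{2}+\mathrm{osc}^{2}(f,\mathcal{T}_{k})$. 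This lets me run the main contraction argument on the weighted flux error alone.

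First I would apply (\ref{Lemma 4.5.1}) with parameter $\delta_{1}>0$, using Lemma \ref{lem 4.6} to bound $\mathrm{osc}(f_{k+1},\mathcal{T}_{k})\leq\mathrm{osc}(f,\mathcal{T}_{k})$. Next I would invoke Corollary \ref{cor 4.10} with $\sigma_{H}=p_{k}$, $\sigma_{h}=p_{k+1}$, parameter $\delta_{3}>0$, and the D\"orfler property (\ref{Dorfler property}) applied to $\eta_{\mathcal{T}_k}^{2}(p_{k},\mathcal{M}_{k})\geq\theta^{2}\eta_{k}^{2}$, to obtain
\[
\eta_{k+1}^{2}\leq(1+\delta_{3})(1-\lambda\theta^{2})\eta_{k}^{2}+(1+\delta_{3}^{-1})\Lambda_{1}\eta_{\mathcal{T}_{0}}^{2}(A^{-1},\mathcal{T}_{0})\|A^{-1/2}(p_{k+1}-p_{k})\|_{L^{2}(\Omega)}^{2}.
\]
Multiplying this by $\gamma>0$, adding to the quasi-orthogonality inequality, and choosing $\gamma$ so small that $\gamma(1+\delta_{3}^{-1})\Lambda_{1}\eta_{\mathcal{T}_{0}}^{2}(A^{-1},\mathcal{T}_{0})\leq 1$ cancels the cross term $\|A^{-1/2}(p_{k+1}-p_{k})\|_{L^{2}(\Omega)}^{2}$ and leaves
\[
(1-\delta_{1})\|A^{-1/2}(p-p_{k+1})\|_{L^{2}(\Omega)}^{2}+\gamma\eta_{k+1}^{2}\leq\|A^{-1/2}(p-p_{k})\|_{L^{2}(\Omega)}^{2}+\gamma\tilde\alpha\,\eta_{k}^{2}+\frac{C_{0}}{\delta_{1}}\mathrm{osc}^{2}(f,\mathcal{T}_{k}),
\]
with $\tilde\alpha:=(1+\delta_{3})(1-\lambda\theta^{2})$.

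The main obstacle is the factor $(1-\delta_{1})<1$ on the left: simply dividing through produces a coefficient $(1-\delta_{1})^{-1}>1$ in front of $\|A^{-1/2}(p-p_{k})\|_{L^{2}(\Omega)}^{2}$ and destroys contraction. Following the device of \cite{Cascon;Kreuzer;Nochetto;Siebert}, I would split this term as
\[
(1-\delta_{1})\alpha^{2}\|A^{-1/2}(p-p_{k})\|_{L^{2}(\Omega)}^{2}+\bigl(1-(1-\delta_{1})\alpha^{2}\bigr)\|A^{-1/2}(p-p_{k})\|_{L^{2}(\Omega)}^{2}
\]
and absorb the second piece into the estimator column via the reliability bound $\|A^{-1/2}(p-p_{k})\|_{L^{2}(\Omega)}^{2}\leq\mathcal{E}_{k}^{2}\leq C_{1}\eta_{k}^{2}$ from (\ref{Section 2.4}). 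After division by $1-\delta_{1}$ and relabelling the scaling weight, the coefficients of $\|A^{-1/2}(p-p_{k})\|_{L^{2}(\Omega)}^{2}$ and of $\eta_{k}^{2}$ can be forced to agree, at the common factor
\[
\alpha^{2}=\frac{\tilde\alpha+C_{1}/\gamma}{1+(1-\delta_{1})C_{1}/\gamma}<1,
\]
which holds provided $\delta_{3}$ is first taken so small that $\tilde\alpha<1$ and then $\delta_{1}$ is taken small enough that $\tilde\alpha+\delta_{1}C_{1}/\gamma<1$.

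Finally, using the elementwise monotonicity $\mathrm{osc}^{2}(f,\mathcal{T}_{k+1})\leq\mathrm{osc}^{2}(f,\mathcal{T}_{k})$ (from $h_{k+1}\leq h_{k}$ and the local best-approximation property of $\Pi_{L_{k+1}}$) to restore the divergence component of $\mathcal{E}_{k+1}^{2}$ on the left, I would obtain
\[
\mathcal{E}_{k+1}^{2}+\gamma\eta_{k+1}^{2}\leq\alpha^{2}(\mathcal{E}_{k}^{2}+\gamma\eta_{k}^{2})+C\,\mathrm{osc}^{2}(f,\mathcal{T}_{k}),
\]
with $C$ depending on $C_{0},C_{1},\delta_{1}$, and $\alpha$. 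The delicate point is the simultaneous choice of $\delta_{1},\delta_{3},\gamma,\alpha$; the freedom comes from the strict inequality $\tilde\alpha<1$ available whenever $\theta>0$ and $\delta_{3}$ is taken sufficiently small, which leaves a nontrivial window for $\alpha^{2}\in(\tilde\alpha,1)$ in which everything can be balanced.
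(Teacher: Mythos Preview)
Your proposal is correct and follows the same overall architecture as the paper's proof: combine the quasi-orthogonality (\ref{Lemma 4.5.1}), the estimator reduction (\ref{Lemma 4.10}) together with D\"orfler marking, cancel the $\|A^{-1/2}(p_{k+1}-p_{k})\|^{2}$ term by the choice of the scaling weight, and then split $\|A^{-1/2}(p-p_{k})\|^{2}$ via reliability (\ref{Section 2.4}) to force a common contraction factor $\alpha^{2}<1$.

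The one genuine difference is how you handle the divergence component of $\mathcal{E}_{k}$. The paper keeps $\|h_{k}\mathrm{div}(p-p_{k})\|^{2}$ inside $\mathcal{E}_{k}$ throughout: it first runs a separate Young-inequality argument (their (\ref{Theorem 5.1.1})) for the divergence part, sets $\delta_{2}=\delta_{1}$, and adds it to the flux quasi-orthogonality to obtain a combined estimate $(1-\delta_{1})\mathcal{E}_{k+1}^{2}\le \mathcal{E}_{k}^{2}-\|A^{-1/2}E_{k}\|^{2}+\tfrac{C_{0}+1}{\delta_{1}}\mathrm{osc}^{2}(f_{k+1},\mathcal{T}_{k})$ before proceeding. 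You instead exploit the identity $\|h_{k}\mathrm{div}(p-p_{k})\|^{2}=\mathrm{osc}^{2}(f,\mathcal{T}_{k})$ to peel this term off entirely, run the contraction on the pure flux error, and reattach it at the end using $\mathrm{osc}^{2}(f,\mathcal{T}_{k+1})\le\mathrm{osc}^{2}(f,\mathcal{T}_{k})$. Your route is a bit cleaner---it avoids the auxiliary inequality (\ref{Theorem 5.1.1}) and makes the role of the oscillation term more transparent---while the paper's route keeps the weighted error $\mathcal{E}_{k}$ as a single object throughout, which matches the form of the theorem statement more directly. The resulting constants and parameter choices are essentially the same; in particular your formula for $\alpha^{2}$ agrees with the paper's after rewriting.
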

\begin{proof} For convenience, we use the notations $\epsilon_{k}
:=p-p_{k}$, $E_{k} :=p_{k}-p_{k+1}$, $\eta_{k}(\mathcal{M}_{k})
:=\eta_{\mathcal{T}_{k}}(p_{k},\mathcal{M}_{k})$, $\eta_{0}(A^{-1})
:=\eta_{\mathcal{T}_{0}}(A^{-1},\mathcal{T}_{0})$. 

For any
$\delta_{2}>0$, by Young's inequality and the mesh-size functions
$h_{k+1}\leq h_{k}$, we have
\begin{equation*}
\begin{array}{lll}
||h_{k+1}{\rm div}\
\epsilon_{k+1}||_{L^{2}(\Omega)}^{2}&=&||h_{k+1}{\rm div}\
\epsilon_{k}||_{L^{2}(\Omega)}^{2}-||h_{k+1}{\rm
div}\ E_{k}||_{L^{2}(\Omega)}^{2}\vspace{2mm}\\
&\ & -2(h_{k+1}{\rm div}\ \epsilon_{k+1},h_{k+1}{\rm
div}\ E_{k})\vspace{2mm}\\
&\leq&||h_{k}{\rm div}\
\epsilon_{k}||_{L^{2}(\Omega)}^{2}-||h_{k+1}{\rm
div}\ E_{k}||_{L^{2}(\Omega)}^{2}\vspace{2mm}\\
&\ & +\delta_{2}||h_{k+1}{\rm div}\
\epsilon_{k+1}||_{L^{2}(\Omega)}^{2}+\delta_{2}^{-1}{\rm
osc}^{2}(f_{k+1},\mathcal{T}_{k}),
\end{array}
\end{equation*}
which implies the  inequality
\begin{equation}\label{Theorem 5.1.1}
\begin{array}{lll}
(1-\delta_{2})||h_{k+1}{\rm div}\
\epsilon_{k+1}||_{L^{2}(\Omega)}^{2}&\leq&||h_{k}{\rm div}\
\epsilon_{k}||_{L^{2}(\Omega)}^{2}-||h_{k+1}{\rm div}\
E_{k}||_{L^{2}(\Omega)}^{2}\vspace{2mm}\\
&\ & +\delta_{2}^{-1}{\rm osc}^{2}(f_{k+1},\mathcal{T}_{k}),
\end{array}
\end{equation}
where $f_{k+1} :=\Pi_{L_{k+1}}f$ is the $L^{2}-$projection of $f$
over $L_{k+1}$.

We combine the quasi-orthogonality (Theorem \ref{thm 4.5}) and (\ref{Theorem
5.1.1}), and take $\delta_{2}=\delta_{1}$ to obtain
\begin{equation}\label{Theorem 5.1.2}
(1-\delta_{1})\mathcal{E}_{k+1}^{2}\leq
\mathcal{E}_{k}^{2}-||A^{-1/2}E_{k}||_{L^{2}(\Omega)}^{2}+\frac{C_{0}+1}{\delta_{1}}{\rm
osc}^{2}(f_{k+1},\mathcal{T}_{k}).
\end{equation}
Applying the estimator reduction (Corollary \ref{cor 4.10}) to
(\ref{Theorem 5.1.2}), we get for any $\bar{\gamma}\geq 0$,
\begin{equation}\label{Theorem 5.1.3}
\begin{array}{lll}
(1-\delta_{1})\mathcal{E}_{k+1}^{2}+\bar{\gamma}\eta_{k+1}^{2}&\leq&
\mathcal{E}_{k}^{2}-||A^{-1/2}E_{k}||_{L^{2}(\Omega)}^{2}\vspace{2mm}\\
&\ & +\bar{\gamma}(1+\delta_{3})\{
\eta_{k}^{2}-\lambda\eta_{k}^{2}(\mathcal{M}_{k})\}\vspace{2mm}\\
&\ & +\bar{\gamma}(1+
\delta_{3}^{-1})\Lambda_{1}\eta_{0}^{2}(A^{-1})||A^{-1/2}E_{k}||_{L^{2}(\Omega)}^{2}\vspace{2mm}\\
&\ & +\frac{C_{0}+1}{\delta_{1}}{\rm
osc}^{2}(f_{k+1},\mathcal{T}_{k}).
\end{array}
\end{equation}
In what follows we choose 
\begin{equation}\label{Theorem 5.1.4}
\bar{\gamma}:=1/\left((1+\delta_{3}^{-1})\Lambda_{1}\eta_{0}^{2}(A^{-1})\right)
\end{equation}
so as to obtain
$$
(1-\delta_{1})\mathcal{E}_{k+1}^{2}+\bar{\gamma}\eta_{k+1}^{2}\leq
(1-\delta_{1})\mathcal{E}_{k}^{2}+\bar{\gamma}(1+ \delta_{3})\{
\eta_{k}^{2}-\lambda\eta_{k}^{2}(\mathcal{M}_{k})\}
+\frac{C_{0}+1}{\delta_{1}}{\rm osc}^{2}(f_{k+1},\mathcal{T}_{k}).
$$
By using the reliable estimation (\ref{Section 2.4}) of the stress
variable error, and invoking D\"{o}rfler marking property
(\ref{Dorfler property}), the above inequality yields for any constant $\alpha$,
{\small
\begin{eqnarray}
(1-\delta_{1})\mathcal{E}_{k+1}^{2}+\bar{\gamma}\eta_{k+1}^{2}&\leq&
\alpha^{2}(1-\delta_{1})\mathcal{E}_{k}^{2}+(1-\alpha^{2}(1-\delta_{1}))\mathcal{E}_{k}^{2}\nonumber\\
&&\ +\bar{\gamma}(1+ \delta_{3})\{
\eta_{k}^{2}-\lambda\eta_{k}^{2}(\mathcal{M}_{k})\}
+\frac{C_{0}+1}{\delta_{1}}{\rm osc}^{2}(f_{k+1},\mathcal{T}_{k})\nonumber\\
 &\leq&
\alpha^{2}(1-\delta_{1})\mathcal{E}_{k}^{2}+(1-\alpha^{2}(1-\delta_{1}))C_{1}\eta_{k}^{2}\nonumber\\
& &\ +\bar{\gamma}(1+ \delta_{3})(1-\lambda\theta^{2})\eta_{k}^{2}
+\frac{C_{0}+1}{\delta_{1}}{\rm
osc}^{2}(f_{k+1},\mathcal{T}_{k})\nonumber\\
&\leq&\alpha^{2}\{(1-\delta_{1})\mathcal{E}_{k}^{2}+\frac{(1-\alpha^{2}(1-\delta_{1}))C_{1}+\bar{\gamma}(1+
\delta_{3})(1-\lambda\theta^{2})}{\alpha^{2}}\eta_{k}^{2}\}\nonumber\\
& & \ +\frac{C_{0}+1}{\delta_{1}}{\rm
osc}^{2}(f_{k+1},\mathcal{T}_{k}).\label{Theorem 5.1.5}
\end{eqnarray}
}
We   choose $\alpha$ such that $(1-\alpha^{2}(1-\delta_{1}))C_{1}+\bar{\gamma}(1+
\delta_{3})(1-\lambda\theta^{2})=\alpha^{2}\bar{\gamma}$, which
indicates
\begin{equation*}
\alpha^{2}=\frac{(1-\delta_{1})C_{1}+\bar{\gamma}(\delta_{1}C_{1}/\bar{\gamma}+
(1+\delta_{3})(1-\lambda\theta^{2}))}{(1-\delta_{1})C_{1}+\bar{\gamma}}.
\end{equation*}
We now      choose $\delta_{3}$  and  $\delta_{1}$ such that
$$\delta_{3}\leq\lambda\theta^{2}/2(1-\lambda\theta^{2})\ \text{ and } \ \delta_{1}<\min\{1,\frac{(1-\lambda\theta^{2})\delta_{3}\bar{\gamma}}{C_{1}}\}.$$
Then it follows
\begin{equation*}
\begin{array}{lll}
\delta_{1}C_{1}/\bar{\gamma}+
(1+\delta_{3})(1-\lambda\theta^{2})&<&(1-\lambda\theta^{2})\delta_{3}+
(1+\delta_{3})(1-\lambda\theta^{2})\vspace{2mm}\\
&\leq&(1-\lambda\theta^{2})(1+\frac{\lambda\theta^{2}}{1-\lambda\theta^{2}})=1,
\end{array}
\end{equation*}
which leads to $\alpha^{2}<1$.
Finally we set
$\gamma=\bar{\gamma}/(1-\delta_{1})$. Then the  desired result
(\ref{Theorem 5.1})  follows from (\ref{Theorem 5.1.5}).
\end{proof}

We note that the oscillation ${\rm osc}(f,\mathcal{T}_{k})$ of the
right-hand side term $f$ measures intrinsic information missing in
the average process associated with finite elements, but fails to
detect fine structures of $f$. When the oscillation
term ${\rm osc}(f,\mathcal{T}_{k})$ is marked, it is easy to show
the following convergence result.
\begin{corollary}\label{cor 5.2}({\rm Convergence result}) Under the
assumptions of Theorem \ref{thm 5.1}, there exit constants $\rho\in(0,1)$,
$\gamma>0$, and $C>0$ depending solely on the shape-regularity of
$\mathcal{T}_{0}$, $b$,
$\eta_{\mathcal{T}_{0}(A^{-1},\mathcal{T}_{0})}$, and the marking
parameter $\theta$, such that
\begin{equation*}
\mathcal{E}_{k}^{2}+\gamma\eta_{k}^{2}\leq C\rho^{2k}.
\end{equation*}
\end{corollary}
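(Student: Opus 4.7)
The plan is to combine the contraction property of Theorem \ref{thm 5.1} with a geometric decay estimate for the data oscillation ${\rm osc}(f,\mathcal{T}_{k})$, which is possible precisely because the preamble assumes oscillation is marked (i.e.\ D\"orfler marking on the set $\{{\rm osc}(f,T)\}_{T\in\mathcal{T}_{k}}$ has been added to the usual estimator marking). The argument is then purely a two-sequence Gronwall-type iteration.

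The first step is to prove oscillation reduction. If the selected marked set $\mathcal{M}_{k}$ satisfies
$${\rm osc}^{2}(f,\mathcal{M}_{k})\geq\widetilde{\theta}\,{\rm osc}^{2}(f,\mathcal{T}_{k})$$
for some $\widetilde{\theta}\in(0,1]$, then on marked elements the mesh size shrinks by at least $2^{-b/2}$ after $b$ bisections, so $\|h_{k+1}(f-f_{k+1})\|_{L^{2}(T)}^{2}\leq 2^{-b}\|h_{k}(f-f_{k})\|_{L^{2}(T)}^{2}$ (using $\|f-f_{k+1}\|_{L^{2}(T)}\leq\|f-f_{k}\|_{L^{2}(T)}$ by the nestedness $L_{k}\subset L_{k+1}$), while on unrefined elements both factors in the oscillation are unchanged or improved. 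Summing yields
$${\rm osc}^{2}(f,\mathcal{T}_{k+1})\leq\beta^{2}\,{\rm osc}^{2}(f,\mathcal{T}_{k}),\qquad \beta^{2}:=1-(1-2^{-b})\widetilde{\theta}\in(0,1).$$

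Second, set $a_{k}:=\mathcal{E}_{k}^{2}+\gamma\eta_{k}^{2}$ and $c_{k}:={\rm osc}^{2}(f,\mathcal{T}_{k})$. Theorem \ref{thm 5.1} and the oscillation reduction give
$$a_{k+1}\leq\alpha^{2}a_{k}+Cc_{k},\qquad c_{k}\leq\beta^{2k}c_{0}.$$
Unrolling the recursion produces
$$a_{k}\leq\alpha^{2k}a_{0}+Cc_{0}\sum_{j=0}^{k-1}\alpha^{2(k-1-j)}\beta^{2j}.$$
Choose any $\rho\in(\max(\alpha,\beta),1)$. If $\alpha\neq\beta$ the geometric sum is bounded by $(\alpha^{2}-\beta^{2})^{-1}\max(\alpha,\beta)^{2k}$, and if $\alpha=\beta$ the factor $k\max(\alpha,\beta)^{2(k-1)}$ is absorbed into $\rho^{2k}$ at the cost of a larger constant. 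In either case $a_{k}\leq C\rho^{2k}$, which is the claim.

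The only non-routine point is the oscillation reduction in step one, and it is essentially the standard D\"orfler-plus-bisection contraction for $\|h(f-f_{k})\|_{L^{2}(\Omega)}$; all other ingredients are immediate consequences of Theorem \ref{thm 5.1} and an elementary iteration of a linear recursion with a geometrically decaying forcing term.
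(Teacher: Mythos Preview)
Your proof is correct and follows exactly the route the paper intends: the paper does not actually give a proof of Corollary \ref{cor 5.2} but merely remarks that ``when the oscillation term ${\rm osc}(f,\mathcal{T}_{k})$ is marked, it is easy to show the following convergence result.'' You supply precisely the two ingredients this remark presupposes---the standard D\"orfler-plus-bisection reduction ${\rm osc}^{2}(f,\mathcal{T}_{k+1})\le\beta^{2}{\rm osc}^{2}(f,\mathcal{T}_{k})$, and the elementary iteration of the recursion $a_{k+1}\le\alpha^{2}a_{k}+Cc_{k}$ with geometrically decaying forcing---so there is nothing to add.
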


\section{Quasi-optimal convergence rate for the AMFEM}
\subsection{Auxiliary results}
In this subsection, we aim at the discrete upper bound, which is one
key for the proof for the quasi-optimal convergence rate.
Simultaneously, we shall prove and quote some preliminary results.

\begin{theorem}\label{thm 6.1}({\rm Discrete upper bound})  Let
$\mathcal{T}_{h}$ and $\mathcal{T}_{H}$ be two nested conforming
triangulations, $(p_{h},u_{h})\in M_{h}\times L_{h}$ and
$(p_{H},u_{H})\in M_{H}\times L_{H}$ be the discrete solutions with
respect to the meshes $\mathcal{T}_{h}$ and $\mathcal{T}_{H}$,
respectively, and $\mathcal{F}_{H} :=\{T\in\mathcal{T}_{H}: T\ {\rm
is\ not\ included\ in}\ \mathcal{T}_{h}\}$. Then there exist
constants $C_{1}$ and $C_{0}$ depending only on the shape regularity
of $\mathcal{T}_{H}$ such that
\begin{equation}\label{Theorem 6.1.1}
||A^{-1/2}(p_{h}-p_{H})||_{L^{2}(\Omega)}^{2}\leq
C_{1}\eta_{\mathcal{T}_{H}}^{2}(p_{H},\mathcal{F}_{H})+C_{0}{\rm
osc}^{2}(f_{h},\mathcal{T}_{H}),
\end{equation}
and
\begin{equation}\label{Theorem 6.1.2}
\#\mathcal{F}_{H}\leq\#\mathcal{T}_{h}-\#\mathcal{T}_{H}.
\end{equation}
\end{theorem}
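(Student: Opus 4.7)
The cardinality bound (\ref{Theorem 6.1.2}) is elementary. Each $T \in \mathcal{F}_H$ is bisected at least once in passing to $\mathcal{T}_h$, contributing at least two descendants, whereas each $T \in \mathcal{T}_H \cap \mathcal{T}_h$ contributes exactly itself. Writing
\begin{equation*}
\#\mathcal{T}_h = \#(\mathcal{T}_H \cap \mathcal{T}_h) + \sum_{T \in \mathcal{F}_H}\#\{T' \in \mathcal{T}_h : T' \subset T\} \geq \#(\mathcal{T}_H \cap \mathcal{T}_h) + 2\#\mathcal{F}_H
\end{equation*}
and $\#\mathcal{T}_H = \#(\mathcal{T}_H \cap \mathcal{T}_h) + \#\mathcal{F}_H$, subtraction gives $\#\mathcal{F}_H \leq \#\mathcal{T}_h - \#\mathcal{T}_H$.

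For the main inequality (\ref{Theorem 6.1.1}), I would route through the auxiliary discrete problem (\ref{quasi-optimality 2.4}). Let $(\widetilde{p}_h,\widetilde{u}_h) \in M_h \times L_h$ be its solution and decompose $p_h - p_H = (p_h - \widetilde{p}_h) + (\widetilde{p}_h - p_H)$. Lemma \ref{lem 4.4} absorbs the first summand into $C_0\,{\rm osc}^2(f_h,\mathcal{T}_H)$, so the task reduces to estimating $\widetilde{w} := \widetilde{p}_h - p_H$. The key gain is that ${\rm div}\,\widetilde{w} = 0$ in $L_h$, since both $\widetilde{p}_h$ and $p_H$ have divergence $-f_H$ in $L_h \supset L_H$. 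Testing (\ref{quasi-optimality 2.4}) against $\widetilde{w} \in M_h$ yields $(A^{-1}\widetilde{p}_h,\widetilde{w})_{0,\Omega} = 0$ and hence the starting identity
\begin{equation*}
||A^{-1/2}\widetilde{w}||_{L^2(\Omega)}^2 = -(A^{-1}p_H,\widetilde{w})_{0,\Omega}.
\end{equation*}

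Next I would exploit the discrete exact-sequence property of the RT, BDM, BDFM families in two dimensions to obtain a stream function $\psi_h$ in the associated continuous Lagrange space $\mathcal{L}_h$ on $\mathcal{T}_h$ with $\widetilde{w} = {\rm Curl}\,\psi_h$. Let $I_H : \mathcal{L}_h \to \mathcal{L}_H$ be a Cl\'ement-type quasi-interpolation. Since ${\rm Curl}(I_H\psi_h) \in M_H$, testing the $p_H$ equation of (\ref{quasi-optimality 2.3}) against ${\rm Curl}(I_H\psi_h)$ and using ${\rm div}\,{\rm Curl} = 0$ forces $(A^{-1}p_H,{\rm Curl}(I_H\psi_h))_{0,\Omega} = 0$. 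Therefore
\begin{equation*}
||A^{-1/2}\widetilde{w}||^2 = -(A^{-1}p_H,{\rm Curl}(\psi_h - I_H\psi_h))_{0,\Omega},
\end{equation*}
and elementwise integration by parts on $\mathcal{T}_H$ produces the familiar residual structure $\sum_{T \in \mathcal{T}_H}\int_T {\rm curl}(A^{-1}p_H)(\psi_h - I_H\psi_h) - \sum_{E \in \varepsilon_H}\int_E J(A^{-1}p_H\cdot\tau)(\psi_h - I_H\psi_h)$. Because $I_H$ reproduces $\mathcal{L}_H$ and $\psi_h|_{\omega_T}$ already lies in $\mathcal{L}_H|_{\omega_T}$ whenever $\omega_T \subset \bigcup(\mathcal{T}_H \cap \mathcal{T}_h)$, the interpolation error is supported in a shape-regular one-ring of $\mathcal{F}_H$. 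Standard Cl\'ement estimates $||\psi_h - I_H\psi_h||_{L^2(T)} \lesssim H_T||\nabla\psi_h||_{L^2(\omega_T)}$ and $||\psi_h - I_H\psi_h||_{L^2(E)} \lesssim H_E^{1/2}||\nabla\psi_h||_{L^2(\omega_E)}$, combined with $|\nabla\psi_h| = |{\rm Curl}\,\psi_h| = |\widetilde{w}|$, finite overlap, and the spectral bounds on $A$, reduce Cauchy--Schwarz to $||A^{-1/2}\widetilde{w}||^2 \lesssim \eta_{\mathcal{T}_H}(p_H,\mathcal{F}_H)\,||A^{-1/2}\widetilde{w}||$. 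One cancellation plus a triangle inequality recombining $(p_h - \widetilde{p}_h)$ with $\widetilde{w}$ yields (\ref{Theorem 6.1.1}) with $C_1, C_0$ depending only on the shape regularity of $\mathcal{T}_H$.

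I expect the main obstacle to be this Helmholtz/localization step. The decomposition $\widetilde{w} = {\rm Curl}\,\psi_h$ needs verification across RT$_l$, BDM$_l$, BDFM$_l$ with the correct choice of $\mathcal{L}_h$; this is standard but invokes the exact mixed sequence. More delicate is that Cl\'ement interpolation naturally exposes a one-ring neighborhood of $\mathcal{F}_H$ rather than $\mathcal{F}_H$ itself, and absorbing this neighborhood into $\eta_{\mathcal{T}_H}(p_H,\mathcal{F}_H)$ requires either enlarging $\mathcal{F}_H$ tacitly (safe because $\#\mathcal{F}_H \leq \#\mathcal{T}_h - \#\mathcal{T}_H$ has built-in slack) or using an interpolant built from averages over patches that lie strictly in the unrefined region, so that it is the identity on $\mathcal{L}_h$ functions already in $\mathcal{L}_H$ away from $\mathcal{F}_H$.
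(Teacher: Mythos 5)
Your proposal follows essentially the same route as the paper: introduce the auxiliary solution $(\widetilde{p}_h,\widetilde{u}_h)$ of (\ref{quasi-optimality 2.4}), observe that $\widetilde{p}_h-p_H$ is divergence-free so it admits a stream function $\psi_h$ in the continuous Lagrange space $S_h^{l+2}$, test against ${\rm Curl}$ of a coarse quasi-interpolant, integrate by parts elementwise, and localize to the refined set. Two small remarks on where the paper is slightly cleaner than your sketch: (i) it uses the Scott--Zhang interpolant $\mathcal{I}_H:S_h^{l+2}\to S_H^{l+2}$ and asserts the sharp locality $(\psi_h-\mathcal{I}_H\psi_h)|_T=0$ for unrefined $T$ (and likewise on unrefined edges), which is exactly the device you propose as the second alternative for avoiding the one-ring spill-over; and (ii) instead of a triangle inequality it exploits the identity $(A^{-1}(p_h-\widetilde{p}_h),\widetilde{p}_h-p_H)_{0,\Omega}=0$ (which follows from ${\rm div}(\widetilde{p}_h-p_H)=0$ and testing (\ref{quasi-optimality 2.4})) to get the exact Pythagorean split $||A^{-1/2}(p_h-p_H)||^2=||A^{-1/2}(p_h-\widetilde{p}_h)||^2+||A^{-1/2}(\widetilde{p}_h-p_H)||^2$, which is a little tidier than your final recombination but costs nothing beyond constants.
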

\begin{proof} The second inequality, i.e., (\ref{Theorem 6.1.2}), follows from
 the definition of
$\mathcal{F}_{H}$. To prove the first one, we introduce the solution
$(\widetilde{p}_{h},\widetilde{u}_{h})\in M_{h}\times L_{h}$ to the
problem (\ref{quasi-optimality 2.4}). From (\ref{quasi-optimality
2.3}) and (\ref{quasi-optimality 2.4}), we obtain ${\rm
div}(\widetilde{p}_{h}-p_{H})=0$, which implies
\begin{equation}\label{Theorem 6.1.3}
\int_{\partial\Omega}(\widetilde{p}_{h}-p_{H})\cdot\nu ds=0,
\end{equation}
where $\nu$ is the outward unit normal vector along
$\partial\Omega$.
Thus $\widetilde{p}_{h}-p_{H}$ satisfies the conditions of Theorem
3.1 in \cite{Girault} on the polygonal domain $\Omega$, namely it is
divergence-free and fulfills (\ref{Theorem 6.1.3}). As a result,
there exists $\psi_{h}\in H^1(\Omega)$ with $\widetilde{p}_{h}-p_{H}={\rm Curl}\
\psi_{h}$.  Since $\widetilde{p}_{h}-p_{H}\in M_{h}$, this leads to
$$\psi_{h}\in S_{h}^{l+2} :=\{\psi_{h}\in C(\overline{\Omega}): {\rm Curl}\
\psi_{h}\in M_{h},\psi_{h}|_{T}\in P_{l+2}(T) \text{ for all }
T\in\mathcal{T}_{h}\}$$ 
(the definition of $S_{H}^{l+2}$ is
analogous).  From (\ref{quasi-optimality 2.4}) with
$q_{h}=\widetilde{p}_{h}-p_{H}$, we get
\begin{equation}\label{Theorem 6.1.4}
\begin{array}{lll}
||A^{-1/2}(\widetilde{p}_{h}-p_{H})||_{L^{2}(\Omega)}^{2}&=&
(A^{-1}(\widetilde{p}_{h}-p_{H}),\widetilde{p}_{h}-p_{H})_{0,\Omega}\vspace{2mm}\\
&=&(A^{-1}\widetilde{p}_{h},\widetilde{p}_{h}-p_{H})_{0,\Omega}-
(A^{-1}p_{H},\widetilde{p}_{h}-p_{H})_{0,\Omega}\vspace{2mm}\\
&=&-(A^{-1}p_{H},\widetilde{p}_{h}-p_{H})_{0,\Omega}
=-(A^{-1}p_{H},{\rm Curl}\ \psi_{h})_{0,\Omega}
\end{array}
\end{equation}

Since ${\rm div}( {\rm Curl} \psi_{H})=0$ for any $\psi_{H}\in
S_{H}^{l+2}$, from (\ref{quasi-optimality 2.3}) with $q_{H}={\rm
Curl}\ \psi_{H}$, we have
\begin{equation}\label{Theorem 6.1.5}
(A^{-1}p_{H},{\rm Curl}\ \psi_{H})_{0,\Omega}=-({\rm div}\ {\rm
Curl}\ \psi_{H},u_{H})_{0,\Omega}=0\ \ {\rm for\ all}\ \ \psi_{H}\in
S_{H}^{l+2}.
\end{equation}
To connect $S_{h}^{l+2}$ with $S_{H}^{l+2}$, we need to use some
local quasi-interpolation, e.g. the Scott-Zhang interpolation,
\cite{Scott} $\mathcal{I}_{H} : S_{h}^{l+2}\rightarrow S_{H}^{l+2}$ with
\begin{equation}\label{Theorem 6.1.6}
||\psi_{h}-\mathcal{I}_{H}\psi_{h}||_{L^{2}(E)}\lesssim
H_{E}^{1/2}|\psi_{h}|_{H^{1}(\omega_{E})}\ \ {\rm for\ all}\ \
E\in\varepsilon_{H}
\end{equation}
and
\begin{equation}\label{Theorem 6.1.7}
||\psi_{h}-\mathcal{I}_{H}\psi_{h}||_{L^{2}(T)}\lesssim
H_{T}|\psi_{h}|_{H^{1}(\omega_{T})}\ \ {\rm for\ all}\ \
T\in\mathcal{T}_{H}.
\end{equation}
We note   the quasi-interpolation $\mathcal{I}_{H}$ is local in
the sense that if $T\in\mathcal{T}_{h}\cap\mathcal{T}_{H}$ or
$E\in\varepsilon_{h}\cap\varepsilon_{H}$ (i.e., $T$ or $E$ is not
refined), then $(\psi_{h}-\mathcal{I}_{H}\psi_{h})|_{T}=0$ or
$(\psi_{h}-\mathcal{I}_{H}\psi_{h})|_{E}=0$.

With $\mathcal{F}_{H}$ defined in Theorem \ref{thm 6.1} and
$\psi_{H}=\mathcal{I}_{H}\psi_{h}$, by using integration by parts, a
combination of (\ref{Theorem 6.1.4}) and (\ref{Theorem 6.1.5})
yields
\begin{equation}\label{Theorem 6.1.8}
\begin{array}{lll}
||A^{-1/2}(\widetilde{p}_{h}-p_{H})||_{L^{2}(\Omega)}^{2}&=&
-(A^{-1}p_{H},{\rm
curl}(\psi_{h}-\psi_{H}))_{0,\Omega}\vspace{2mm}\\
&=&\displaystyle\sum\limits_{T\in\mathcal{T}_{H}}-\int_{T}A^{-1}p_{H}\cdot{\rm
curl}(\psi_{h}-\psi_{H})\vspace{2mm}\\
&=&\displaystyle\sum\limits_{T\in\mathcal{T}_{H}}\int_{T}{\rm
curl}(A^{-1}p_{H})(\psi_{h}-\psi_{H})\vspace{2mm}\\
&\ &\ -\displaystyle\sum\limits_{E\in\varepsilon_{H}}
\int_{E}J(A^{-1}p_{H}\cdot\tau)(\psi_{h}-\psi_{H}).
\end{array}
\end{equation}
Applying (\ref{Theorem 6.1.6}) and (\ref{Theorem 6.1.7}) to the
identity (\ref{Theorem 6.1.8}), we arrive at
\begin{equation*}
\begin{array}{lll}
||A^{-1/2}(\widetilde{p}_{h}-p_{H})||_{L^{2}(\Omega)}^{2}&\lesssim&
\eta_{\mathcal{T}_{H}}(p_{H},\mathcal{F}_{H})|\psi_{h}|_{H^{1}(\Omega)}\vspace{2mm}\\
&\leq&C_{1}^{1/2}\eta_{\mathcal{T}_{H}}(p_{H},\mathcal{F}_{H})
||A^{-1/2}(\widetilde{p}_{h}-p_{H})||_{L^{2}(\Omega)},
\end{array}
\end{equation*}
which yields
\begin{equation}\label{Theorem 6.1.10}
||A^{-1/2}(\widetilde{p}_{h}-p_{H})||_{L^{2}(\Omega)}\leq
C_{1}^{1/2}\eta_{\mathcal{T}_{H}}(p_{H},\mathcal{F}_{H}).
\end{equation}
On the other hand,
from $\widetilde{p}_{h}-p_{H}\in M_{h}$ we have
\begin{equation}\label{Theorem 6.1.9}
(A^{-1}(p_{h}-\widetilde{p}_{h}),\widetilde{p}_{h}-p_{H})_{0,\Omega}=-
({\rm
div}(\widetilde{p}_{h}-p_{H}),u_{h}-\widetilde{u}_{h})_{0,\Omega}=0.
\end{equation}
Then a combination of (\ref{Theorem 6.1.9}), (\ref{Theorem 6.1.10}) and
Lemma 
\ref{lem 4.4} yields
\begin{equation*}
\begin{array}{lll}
||A^{-1/2}(p_{h}-p_{H})||_{L^{2}(\Omega)}^{2}&=&
(A^{-1}(p_{h}-p_{H}),p_{h}-p_{H})_{0,\Omega}\vspace{2mm}\\
&=&(A^{-1}(p_{h}-\widetilde{p}_{h}+\widetilde{p}_{h}-p_{H}),p_{h}-
\widetilde{p}_{h}+\widetilde{p}_{h}-p_{H})_{0,\Omega}\vspace{2mm}\\
&=&||A^{-1/2}(p_{h}-\widetilde{p}_{h})||_{L^{2}(\Omega)}^{2}+
||A^{-1/2}(\widetilde{p}_{h}-p_{H})||_{L^{2}(\Omega)}^{2}\vspace{2mm}\\
&\leq&C_{1}\eta_{\mathcal{T}_{H}}^{2}(p_{H},\mathcal{F}_{H})+C_{0}{\rm
osc}^{2}(f_{h},\mathcal{T}_{H}),
\end{array}
\end{equation*}
namely the   result (\ref{Theorem 6.1.1}) holds.
\end{proof}

\begin{remark}\label{rem 6.1}  One can also include the second term, ${\rm
osc}^{2}(f_{h},\mathcal{T}_{H})$, of the right-hand side in (\ref{Theorem
6.1.1})  in the first term
$\eta_{\mathcal{T}_{H}}^{2}(p_{H},\mathcal{F}_{H})$. In doing so, however, one
cannot expect any relaxation of   complexity of analysis, because the oscillation of
$f_{h}$ over $\mathcal{T}_{H}$ still appears in the contraction property
(see Theorem \ref{thm 5.1}).
\end{remark}

\begin{corollary}\label{cor 6.2}({\rm Discrete upper bound})  Under
the assumption of Theorem \ref{thm 6.1}, there exist constants $C_{1}$ and
$C_{0}$ depending only on the shape regularity of $\mathcal{T}_{H}$
such that
\begin{equation}\label{Corollary 6.2}
||A^{-\frac{1}{2}}(p_{h}-p_{H})||_{L^{2}(\Omega)}^{2}+||h{\rm
div}(p_{h}-p_{H})||_{L^{2}(\Omega)}^{2}\leq
C_{1}\eta_{\mathcal{T}_{H}}^{2}(p_{H},\mathcal{F}_{H})+C_{0}{\rm
osc}^{2}(f,\mathcal{T}_{H}).
\end{equation}
\end{corollary}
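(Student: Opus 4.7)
The plan is to start from Theorem \ref{thm 6.1}, which already controls $\|A^{-1/2}(p_h-p_H)\|_{L^2(\Omega)}^2$ by $C_1\eta_{\mathcal{T}_H}^2(p_H,\mathcal{F}_H)+C_0\,\mathrm{osc}^2(f_h,\mathcal{T}_H)$, and then separately estimate the additional divergence term $\|h\,\mathrm{div}(p_h-p_H)\|_{L^2(\Omega)}^2$. The last step will be to convert the oscillation of $f_h$ into the oscillation of $f$ via Lemma \ref{lem 4.6} (together with the elementary bound on $\|h(f_H-f_h)\|$ below).

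First I would use the second equation of the discrete problem (\ref{quasi-optimality 2.3}) on both meshes to identify $\mathrm{div}\,p_h=-f_h$ and $\mathrm{div}\,p_H=-f_H$, so that
\[
\|h\,\mathrm{div}(p_h-p_H)\|_{L^2(\Omega)}^2=\|h(f_H-f_h)\|_{L^2(\Omega)}^2.
\]
Next I would exploit the nestedness $L_H\subset L_h$: on each $T\in\mathcal{T}_H$ the difference $f_h-f_H$ lies in $L_h|_T$, which is (piecewise) polynomial on the subdivision of $T$ in $\mathcal{T}_h$, so the elementwise Galerkin orthogonality $(f-f_h,v_h)_{0,T}=0$ applies and yields the local Pythagorean identity
\[
\|f-f_H\|_{L^2(T)}^2=\|f-f_h\|_{L^2(T)}^2+\|f_h-f_H\|_{L^2(T)}^2,\qquad T\in\mathcal{T}_H.
\]
In particular $\|f_h-f_H\|_{L^2(T)}\le\|f-f_H\|_{L^2(T)}$ on every $T\in\mathcal{T}_H$.

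Since $\mathcal{T}_h$ refines $\mathcal{T}_H$, the mesh-size function satisfies $h\le H$ pointwise, so summing the local estimate over $\mathcal{T}_H$ gives
\[
\|h(f_H-f_h)\|_{L^2(\Omega)}^2\le\sum_{T\in\mathcal{T}_H}H_T^2\|f_H-f_h\|_{L^2(T)}^2\le\sum_{T\in\mathcal{T}_H}H_T^2\|f-f_H\|_{L^2(T)}^2=\mathrm{osc}^2(f,\mathcal{T}_H).
\]
Combining this with (\ref{Theorem 6.1.1}) and invoking Lemma \ref{lem 4.6} to dominate $\mathrm{osc}^2(f_h,\mathcal{T}_H)$ by $\mathrm{osc}^2(f,\mathcal{T}_H)$ produces the desired estimate (\ref{Corollary 6.2}) after absorbing the two oscillation contributions into a single constant in front of $\mathrm{osc}^2(f,\mathcal{T}_H)$.

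There is no real obstacle here: once one notices that $\mathrm{div}(p_h-p_H)=f_H-f_h$, the rest is a short routine reduction using the nestedness of the projections and the pointwise inequality $h\le H$. The only small subtlety to verify carefully is that the orthogonality used to obtain the local Pythagorean identity really holds on $T\in\mathcal{T}_H$ despite $T$ being subdivided in $\mathcal{T}_h$; this is fine because $L_h$ is a piecewise polynomial (discontinuous) space, so the projection $f_h$ is a local best approximation on each subelement of $T$ and orthogonality survives summation over these subelements.
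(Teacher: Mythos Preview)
Your proof is correct and follows essentially the same approach as the paper. The paper is slightly terser: it writes $\|h\,\mathrm{div}(p_h-p_H)\|_{L^2(\Omega)}^2\le\|H\,\mathrm{div}(p_h-p_H)\|_{L^2(\Omega)}^2=\mathrm{osc}^2(f_h,\mathcal{T}_H)$ in one line (using $\Pi_{L_H}f_h=f_H$) and then invokes Lemma~\ref{lem 4.6}, whereas you spell out the Pythagorean identity explicitly --- but that identity is exactly the content behind Lemma~\ref{lem 4.6}, so the two arguments coincide.
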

\begin{proof}
Since
\begin{equation}\label{Corollary 6.2.1}
||h{\rm div}(p_{h}-p_{H})||_{L^{2}(\Omega)}^{2}\leq||H{\rm
div}(p_{h}-p_{H})||_{L^{2}(\Omega)}^{2}={\rm
osc}^{2}(f_{h},\mathcal{T}_{H}),
\end{equation}
in view of (\ref{Theorem 6.1.1}) and 
 Lemma \ref{lem 4.6}, we obtain the desired result (\ref{Corollary 6.2}).
\end{proof}

Note that the constant $C_{0}$ in Corollary \ref{cor 6.2} is actually the
constant $C_{0}$ appeared in Theorem \ref{thm 6.1} plus $1$. Here, for simplicity we still
denote it by $C_{0}$.

In what follows we shall prove a stable result in the continuous level. Let $f_{H}$ denote the
$L^{2}-$projection of $f$ over $L_{H}$, we consider the following
problem:
\begin{equation}\label{quasi-optimality2.5}
 \left \{ \begin{array}{ll}
  -\mbox{div}(A\nabla \tilde{u})=f_{H}\ \ \  & \mbox{in}\ \ \Omega\\
 \ \hspace{17mm}\tilde{u}=0 & \mbox{on}\ \ \ \partial{\Omega}.
 \end{array}\right.
\end{equation}
By the Lax-Milgram lemma, there exists unique solution $\tilde{u}\in
H_{0}^{1}(\Omega)$ to the problem (\ref{quasi-optimality2.5}).

\begin{lemma}\label{lem 6.3}({\rm Stable result})  Given a shape
regular triangulation $\mathcal{T}_{H}$ of $\Omega$, let $u\in H_{0}^{1}(\Omega)$ and
$\widetilde{u}\in H_{0}^{1}(\Omega)$ are respectively the weak solutions to the problems
 (\ref{quasi-optimality 1}) and
(\ref{quasi-optimality2.5}), and let $p=A\nabla u$ and
$\widetilde{p}=A\nabla\widetilde{u}$ denote the continuous flux.
Then there exists a positive constant $C_{0}$ depending only on the shape
regularity of $\mathcal{T}_{H}$ and the eigenvalues of $A$ such that
\begin{equation}\label{Lemma 6.3}
||A^{-1/2}(p-\widetilde{p})||_{L^{2}(\Omega)}\leq C_{0}^{1/2}{\rm
osc}(f,\mathcal{T}_{H}.)
\end{equation}
\end{lemma}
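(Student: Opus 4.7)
The plan is to treat this as a standard energy-plus-duality argument in the continuous setting, exploiting that $f-f_H$ is $L^2$-orthogonal to $L_H$ (which contains piecewise constants on $\mathcal{T}_H$).

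First, I would set $w:=u-\widetilde{u}\in H_0^1(\Omega)$. Subtracting the two continuous problems (\ref{quasi-optimality 1}) and (\ref{quasi-optimality2.5}), $w$ is the weak solution of $-\mathrm{div}(A\nabla w)=f-f_H$ in $\Omega$ with $w=0$ on $\partial\Omega$. Testing the weak form with $w$ itself and observing that $p-\widetilde{p}=A\nabla w$, so $A^{-1/2}(p-\widetilde{p})=A^{1/2}\nabla w$, I get the energy identity
\begin{equation*}
\|A^{-1/2}(p-\widetilde{p})\|_{L^2(\Omega)}^2 \;=\; (A\nabla w,\nabla w)_{0,\Omega} \;=\; (f-f_H,w)_{0,\Omega}.
\end{equation*}

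Next, I would exploit the orthogonality $f-f_H\perp L_H$. Since the discrete space $L_H$ contains the piecewise constants on $\mathcal{T}_H$, let $\overline{w}\in L_H$ be given by $\overline{w}|_T:=\frac{1}{|T|}\int_T w$ for every $T\in\mathcal{T}_H$. Then $(f-f_H,\overline{w})_{0,\Omega}=0$, so
\begin{equation*}
(f-f_H,w)_{0,\Omega} \;=\; (f-f_H,w-\overline{w})_{0,\Omega} \;=\; \sum_{T\in\mathcal{T}_H}\int_T (f-f_H)(w-\overline{w}).
\end{equation*}
An elementwise Poincar\'e inequality gives $\|w-\overline{w}\|_{L^2(T)}\lesssim H_T\|\nabla w\|_{L^2(T)}$ with a constant depending only on the shape regularity of $\mathcal{T}_H$. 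Applying Cauchy--Schwarz elementwise and summing yields
\begin{equation*}
(f-f_H,w)_{0,\Omega} \;\lesssim\; \|H(f-f_H)\|_{L^2(\Omega)}\,\|\nabla w\|_{L^2(\Omega)} \;=\; \mathrm{osc}(f,\mathcal{T}_H)\,\|\nabla w\|_{L^2(\Omega)}.
\end{equation*}

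Finally, using uniform positive definiteness of $A$ to bound $\|\nabla w\|_{L^2(\Omega)}\lesssim \|A^{1/2}\nabla w\|_{L^2(\Omega)} = \|A^{-1/2}(p-\widetilde{p})\|_{L^2(\Omega)}$, the above two displays combine to
\begin{equation*}
\|A^{-1/2}(p-\widetilde{p})\|_{L^2(\Omega)}^2 \;\lesssim\; \mathrm{osc}(f,\mathcal{T}_H)\,\|A^{-1/2}(p-\widetilde{p})\|_{L^2(\Omega)},
\end{equation*}
and cancelling one factor gives (\ref{Lemma 6.3}) with $C_0^{1/2}$ absorbing the Poincar\'e constant and the lower eigenvalue bound of $A$, exactly the dependencies claimed.

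There is no real obstacle here: the argument is the classical Aubin--Nitsche-type step that turns an $L^2$-data perturbation into an energy-norm error via elementwise Poincar\'e. The only mild subtlety is to remember to use $L_H$ (which contains piecewise constants) for the orthogonality trick, rather than needing $\overline{w}$ to be piecewise polynomial of higher degree, since the Poincar\'e-on-mean-value bound is all that is required.
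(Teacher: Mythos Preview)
Your proof is correct and follows essentially the same strategy as the paper: derive the energy identity $\|A^{-1/2}(p-\widetilde p)\|_{L^2(\Omega)}^2=(f-f_H,w)_{0,\Omega}$, subtract an element of $L_H$ via the orthogonality of $f-f_H$, apply a local approximation estimate, and finish with Cauchy--Schwarz and the eigenvalue bound on $A$. The only difference is that the paper inserts a Cl\'ement/Scott--Zhang quasi-interpolant $\mathcal{J}w\in V_H^c\subset L_H$ and invokes $\|w-\mathcal{J}w\|_{L^2(T)}\lesssim H_T\|\nabla w\|_{L^2(\overline\omega_T)}$, whereas you use the simpler piecewise-constant mean $\overline{w}$ together with the elementwise Poincar\'e inequality---your choice is cleaner here since $w\in H^1(\Omega)$ globally, so no patch-averaging is needed.
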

\begin{proof} From Green's
formula we have
\begin{eqnarray}\label{Lemma 6.3.1}
||A^{-1/2}(p-\widetilde{p})||_{L^{2}(\Omega)}^{2}&=&
||A^{1/2}\nabla(u-\widetilde{u})||_{L^{2}(\Omega)}^{2}\nonumber\\
&=&-\displaystyle\int_{\Omega}(u-\widetilde{u})\nabla\cdot(A\nabla(u-\widetilde{u}))\nonumber\\
&=&(f-f_{H},u-\widetilde{u})_{0,\Omega}.
\end{eqnarray}
Let $V_{H}^{c}\subset
L_{H}$ be a conforming finite element space, and set $\overline{\omega}_{T} :=\bigcup\{
T'\in\mathcal{T}_{H}: T'\cap \overline{T}\neq\emptyset\}$ for $T\in\mathcal{T}_{H}$.
 We consider the
Cl\'{e}ment or Scott-Zhang interpolation operator or other
regularized conforming
 finite element approximation operator
$\mathcal{J} :H_{0}^{1}(\Omega)\rightarrow V_{H}^{c}$ which satisfies
\begin{equation}\label{Lemma 6.3.2}
H_{T}^{-1}||v-\mathcal{J}v||_{L^{2}(T)}\lesssim ||\nabla
v||_{L^{2}(\overline{\omega}_{T})}\ \ {\rm for\ all}\
T\in\mathcal{T}_{H},v\in H_{0}^{1}(\Omega).
\end{equation}

Existence of such an operator is guaranteed (cf.
\cite{Clement,Scott,Carstensen,Carstensen;Bartels}).  Recall that
$f_{H} :=\Pi_{L_{H}}f$ is $L^{2}-$projection of $f$ over $L_{H}$.
This means $(f-f_{H},v_{H})_{0,\Omega}=0$ for all $v_{H}\in L_{H}$.
Denoting $v:=u-\widetilde{u}$, from (\ref{Lemma 6.3.1}) and (\ref{Lemma 6.3.2}) we  arrive at
\begin{equation*}
\begin{array}{lll}
||A^{-1/2}(p-\widetilde{p})||_{L^{2}(\Omega)}^{2}&=&
(f-f_{H},v-\mathcal{J}v)_{0,\Omega}\vspace{2mm}\\
&=&\displaystyle\sum\limits_{T\in\mathcal{T}_{H}}(f-f_{H},v-\mathcal{J}v)_{0,T}\vspace{2mm}\\
&\lesssim&\displaystyle\sum\limits_{T\in\mathcal{T}_{H}}||H(f-f_{H})||_{L^{2}(T)}||\nabla
v||_{L^{2}(T)}\vspace{2mm}\\
&\leq& C_{0}^{1/2}{\rm
osc}(f,\mathcal{T}_{H})||A^{-1/2}(p-\widetilde{p})||_{L^{2}(\Omega)},
\end{array}
\end{equation*}
which implies the desired result (\ref{Lemma 6.3}).
\end{proof}

We finish this subsection by quoting a counting conclusion from
\cite{Stevenson} for the
overlay $\mathcal{T} :=\mathcal{T}_{1}\oplus\mathcal{T}_{2}$ of two conforming
triangulations $\mathcal{T}_{1}$ and $\mathcal{T}_{2}$, which shows 
$\mathcal{T}$ is the smallest conforming triangulation for the triangulations $\mathcal{T}_{1}$ and $\mathcal{T}_{2}$. 

\begin{lemma}\label{lem 6.4}({\rm Overlay of meshes})  For two
conforming triangulations $\mathcal{T}_{1}$ and $\mathcal{T}_{2}$
the overlay $\mathcal{T} :=\mathcal{T}_{1}\oplus\mathcal{T}_{2}$ is
conforming, and satisfies
\begin{equation*}
\#\mathcal{T}\leq\#\mathcal{T}_{1}+\#\mathcal{T}_{2}-\#\mathcal{T}_{0}.
\end{equation*}
\end{lemma}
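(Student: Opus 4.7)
\medskip

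\noindent\textbf{Proof plan for Lemma \ref{lem 6.4}.}
The plan is to represent each conforming refinement of $\mathcal{T}_{0}$ as a forest of binary trees rooted at the elements of $\mathcal{T}_{0}$, with the leaves of the tree rooted at $T\in\mathcal{T}_{0}$ corresponding exactly to the descendants of $T$ in the given triangulation. Both $\mathcal{T}_{1}$ and $\mathcal{T}_{2}$ arise from the same initial mesh $\mathcal{T}_{0}$ through the bisection rule of Section 2.7, so they admit such tree encodings; call them $\mathfrak{F}_{1}$ and $\mathfrak{F}_{2}$.

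First I would define the overlay tree-wise: for each $T\in\mathcal{T}_{0}$, let $\mathfrak{T}(T)$ denote the smallest binary tree that contains both $\mathfrak{F}_{1}(T)$ and $\mathfrak{F}_{2}(T)$ as subtrees (equivalently, the node-union of the two trees). Declaring $\mathcal{T}:=\mathcal{T}_{1}\oplus\mathcal{T}_{2}$ to be the triangulation whose descendants of each $T$ are exactly the leaves of $\mathfrak{T}(T)$ gives, at each point of $\Omega$, the finer of the two refinements $\mathcal{T}_{1}$ and $\mathcal{T}_{2}$. Conformity of $\mathcal{T}$ is the delicate step: I would invoke the property of the bisection routine (see Section~2.7 and \cite{Stevenson1}) guaranteeing that the set of conforming refinements of $\mathcal{T}_{0}$ is closed under taking such pointwise-finest combinations; this is precisely the stability/closure property of the newest-vertex-bisection refinement that underlies $\mathtt{REFINE}$, and it is the part of the argument I regard as the main obstacle, since it hinges on combinatorial properties of compatible bisection patches rather than on anything developed in this paper.

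Given conformity, the cardinality bound is elementary and reduces to counting leaves tree by tree. For a single rooted binary tree one has the standard identity that the number of leaves equals the number of internal nodes plus one. Letting $\ell_{i}(T)$ be the number of leaves of $\mathfrak{F}_{i}(T)$ and $\ell(T)$ that of $\mathfrak{T}(T)$, the inclusion of node sets gives
\begin{equation*}
\ell(T)\;\leq\;\ell_{1}(T)+\ell_{2}(T)-1,
\end{equation*}
because the two trees share at least the root node, and more generally every node common to both trees is counted once on the left but twice on the right; translated to leaves via the leaves-equals-internal-plus-one identity this yields the displayed inequality.

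Finally I would sum over $T\in\mathcal{T}_{0}$:
\begin{equation*}
\#\mathcal{T}\;=\;\sum_{T\in\mathcal{T}_{0}}\ell(T)\;\leq\;\sum_{T\in\mathcal{T}_{0}}\bigl(\ell_{1}(T)+\ell_{2}(T)-1\bigr)\;=\;\#\mathcal{T}_{1}+\#\mathcal{T}_{2}-\#\mathcal{T}_{0},
\end{equation*}
which is the claimed estimate. In summary, conformity is inherited from the closure property of the bisection rule, and the counting bound is a direct consequence of the leaf-count inequality for the union of two binary trees sharing a common root.
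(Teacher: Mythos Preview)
Your proof plan is correct and is essentially the standard argument; however, the paper itself does not prove Lemma~\ref{lem 6.4} at all --- it is explicitly \emph{quoted} from Stevenson \cite{Stevenson} (see the sentence introducing the lemma: ``We finish this subsection by quoting a counting conclusion from \cite{Stevenson}\ldots''). So there is no ``paper's own proof'' to compare against.

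That said, your sketch matches the argument one finds in \cite{Stevenson} and \cite{Cascon;Kreuzer;Nochetto;Siebert}: encode each refinement as a forest of full binary trees over $\mathcal{T}_{0}$, take the node-wise union, use the identity $\#\text{nodes}=2\,\#\text{leaves}-1$ together with $\#(\mathfrak{F}_{1}\cup\mathfrak{F}_{2})\le\#\mathfrak{F}_{1}+\#\mathfrak{F}_{2}-1$ (the shared root), and sum over $T\in\mathcal{T}_{0}$. You are also right to flag conformity of the overlay as the one nontrivial point; it relies on the structural property of newest-vertex bisection (the refinement edge is determined by the element, not by history) and is established in \cite{Stevenson,Stevenson1}, not in this paper.
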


\subsection{Quasi-optimal convergence rate}
In this subsection, we shall prove the quasi-optimal convergence
rate of the AMFEM for the stress variable error in a weighted norm.
To this end, we need to introduce two nonlinear approximation
classes. Let $\mathcal{P}_{N}$ be the set of all triangulations
$\mathcal{T}$ which is refined from $\mathcal{T}_{0}$ and
$\#\mathcal{T}\leq N$. For a given triangulation $\mathcal{T}$, let
$M_{\mathcal{T}}$ and $L_{\mathcal{T}}$ denote respectively the approximation
spaces to the flux and displacement,
$(p_{\mathcal{T}},u_{\mathcal{T}})\in M_{\mathcal{T}}\times
L_{\mathcal{T}}$ be the approximation to $(p,u)$, and $h_{\mathcal{T}}$ be mesh-size
functions with respect to the triangulation $\mathcal{T}$. The quantity
of the best approximation to the total error in $\mathcal{P}_{N}$ is
given by
\begin{equation*}
\begin{array}{lll}
\sigma(N;p,f,A) :&=&\inf_{\mathcal{T}\in\mathcal{P}_{N}}\{
||A^{-1/2}(p-p_{\mathcal{T}})||_{L^{2}(\Omega)}^{2}\vspace{2mm}\\
&\ &\ +||h_{\mathcal{T}}{\rm
div}(p-p_{\mathcal{T}})||_{L^{2}(\Omega)}^{2}+{\rm
osc}_{\mathcal{T}}^{2}(p_{\mathcal{T}},\mathcal{T})\}^{1/2},
\end{array}
\end{equation*}
and for $s>0$ we define the nonlinear approximation class
$\mathbb{A}_{s}$ as
\begin{equation*}
\mathbb{A}_{s} :=\{(p,f,A)|\ |(p,f,A)|_{s} :=
\sup_{N>N_{0}=\#\mathcal{T}_{0}}N^{s}\sigma(N;p,f,A)<\infty\}.
\end{equation*}
Moreover , the quantity of the best approximation to the right-hand
side term $f$ in $\mathcal{P}_{N}$ is described by
\begin{equation*}
||f||_{\mathcal{A}_{0}^{s}}
:=\sup_{N>N_{0}=\#\mathcal{T}_{0}}N^{s}\inf_{\mathcal{T}\in\mathcal{P}_{N}}
{\rm osc}(f,\mathcal{T}).
\end{equation*}
By the nonlinear approximation theory \cite{DeVore1,DeVore2}, we
know that if $f\in L^{2}(\Omega)$ then
$||f||_{\mathbb{A}_{0}^{s}}<\infty$. Here, we recall a result of
Binev, Dahmen, and DeVore \cite{Binev;Dahmen;DeVore} which shows
that the approximation of data $f$ can be done in an optimal way.
The proof of this result can be found in
\cite{Binev;Dahmen;DeVore,Binev;Dahmen;DeVore;Petrushev}.

\begin{lemma}\label{lem 6.5}({\rm Approximation of data $f$})  Given an $f\in L^{2}(\Omega)$, a
tolerance $\varepsilon$,  and a shape regular
triangulation $\mathcal{T}_{0}$, there exists an algorithm
\begin{equation*}
\mathcal{T}_{H}=APPROX(f,\mathcal{T}_{0},\varepsilon)
\end{equation*}
such that
\begin{equation*}
{\rm osc}(f,\mathcal{T}_{H})\leq\varepsilon,\ \ \ \ \
\#\mathcal{T}_{H}-\#\mathcal{T}_{0}\leq
C||f||_{\mathcal{A}_{0}^{s}}^{1/s} \varepsilon^{-1/s}.
\end{equation*}
\end{lemma}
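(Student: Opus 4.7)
The plan is to construct APPROX as a greedy thresholding algorithm on the oscillation indicators and then to establish the cardinality bound by comparison with a near-optimal triangulation guaranteed by the finiteness of $\|f\|_{\mathcal{A}_0^s}$. Note that, unlike the stress indicators, the quantity ${\rm osc}(f,\mathcal{T})=\|h_{\mathcal{T}}(f-\Pi_{L_{\mathcal{T}}}f)\|_{L^2(\Omega)}$ depends only on $f$ and the mesh, not on any discrete PDE solution, so this reduces to a pure (nonlinear) approximation problem for $f$.

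First, I would define APPROX as an adaptive loop with iteration counter $k$. At each step compute the elementwise indicators ${\rm osc}(f,T)^2:=h_T^2\|f-\Pi_{L_{\mathcal{T}_k}}f\|_{L^2(T)}^2$ for $T\in\mathcal{T}_k$; exit when ${\rm osc}(f,\mathcal{T}_k)\leq\varepsilon$; otherwise select (by a D\"orfler-type rule with some fixed parameter $\vartheta\in(0,1)$) a subset $\mathcal{M}_k\subset\mathcal{T}_k$ of \emph{minimal} cardinality with ${\rm osc}^2(f,\mathcal{M}_k)\geq\vartheta^2\,{\rm osc}^2(f,\mathcal{T}_k)$, and refine via $\mathcal{T}_{k+1}=REFINE(\mathcal{T}_k,\mathcal{M}_k)$. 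Termination is immediate, since bisection drives $h_T\to 0$ on marked elements and $\|f-\Pi_{L_{\mathcal{T}_k}}f\|_{L^2}$ is controlled by $\|f\|_{L^2}$, so ${\rm osc}(f,\mathcal{T}_k)\to 0$ as $k\to\infty$.

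Next comes the crucial counting step. Given a target tolerance $\delta=\kappa\varepsilon$ with some small $\kappa\in(0,1)$ to be fixed, the hypothesis $\|f\|_{\mathcal{A}_0^s}<\infty$ provides a triangulation $\mathcal{T}^\star\in\mathcal{P}_{N^\star}$ with ${\rm osc}(f,\mathcal{T}^\star)\leq\delta$ and $N^\star-\#\mathcal{T}_0\leq C\|f\|_{\mathcal{A}_0^s}^{1/s}\delta^{-1/s}$. I would then form the overlay $\mathcal{T}_k\oplus\mathcal{T}^\star$ (using Lemma \ref{lem 6.4}) and exploit the monotonicity of oscillation under refinement to show that the set of elements in $\mathcal{T}_k$ that must be further refined to match $\mathcal{T}^\star$ already has enough oscillation mass to satisfy the D\"orfler condition. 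A pigeonhole/overlay argument then yields $\#\mathcal{M}_k\lesssim \#\mathcal{T}^\star-\#\mathcal{T}_0$ \emph{at every step before termination}, so that summing and applying Stevenson's bound (Lemma \ref{lem 2.2}) gives
\begin{equation*}
\#\mathcal{T}_H-\#\mathcal{T}_0\leq C_0\sum_{j=0}^{k-1}\#\mathcal{M}_j \lesssim \#\mathcal{T}^\star-\#\mathcal{T}_0 \leq C\|f\|_{\mathcal{A}_0^s}^{1/s}\varepsilon^{-1/s}.
\end{equation*}

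The main obstacle is the overlay/pigeonhole step: I must show that the minimality of $\#\mathcal{M}_k$ in D\"orfler selection, combined with ${\rm osc}(f,\mathcal{T}^\star)\leq\kappa\varepsilon<{\rm osc}(f,\mathcal{T}_k)$, forces $\#\mathcal{M}_k$ to be no larger (up to a constant depending on $\vartheta$ and $\kappa$) than the number of elements in $\mathcal{T}_k$ that differ from $\mathcal{T}_k\oplus\mathcal{T}^\star$. This is precisely the argument pioneered by Binev--Dahmen--DeVore and sharpened by Stevenson; the delicate point is choosing $\kappa$ small enough (relative to $\vartheta$) so that the oscillation mass concentrated outside the ``already sufficient'' part of the mesh exceeds the $\vartheta^2\,{\rm osc}^2(f,\mathcal{T}_k)$ threshold, while not inflating the constant $C$ beyond acceptable bounds. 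Once this quasi-optimality of the marking is in hand, the remaining manipulations reduce to the standard geometric summation and the shape-regular complexity estimate of Lemma \ref{lem 2.2}.
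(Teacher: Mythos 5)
The paper itself does not prove Lemma~\ref{lem 6.5}; it quotes it from Binev--Dahmen--DeVore and Binev--Dahmen--DeVore--Petrushev, where the algorithm \textit{APPROX} is built by a tree thresholding/sorting procedure, not by an adaptive loop. Your proposal of an adaptive D\"orfler-marking loop driven by the oscillation indicators is therefore a genuinely different route from the one the paper relies on, and it is a sensible one: since ${\rm osc}(f,\mathcal{T})$ depends only on $f$ and the mesh, the problem is indeed a pure nonlinear approximation problem, and a Stevenson/CKNS-style quasi-optimality argument can be made to work. That much is a fair observation.

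However, there is a genuine gap in your counting step. You fix a single $\delta=\kappa\varepsilon$, obtain one reference mesh $\mathcal{T}^\star$ with $\#\mathcal{T}^\star-\#\mathcal{T}_0\lesssim\|f\|_{\mathcal{A}_0^s}^{1/s}\varepsilon^{-1/s}$, and then claim $\#\mathcal{M}_j\lesssim\#\mathcal{T}^\star-\#\mathcal{T}_0$ at \emph{every} step $j$. That per-step bound is fine, but it does not imply $\sum_{j=0}^{k-1}\#\mathcal{M}_j\lesssim\#\mathcal{T}^\star-\#\mathcal{T}_0$: a sum of $k$ terms each bounded by $B$ is only bounded by $kB$. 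As written, your argument only delivers $\#\mathcal{T}_H-\#\mathcal{T}_0\lesssim K\|f\|_{\mathcal{A}_0^s}^{1/s}\varepsilon^{-1/s}$, where $K$ is the number of iterations, which via oscillation contraction is of order $\log({\rm osc}(f,\mathcal{T}_0)/\varepsilon)$; this leaves a logarithmic loss and fails to give the claimed optimal cardinality. The standard fix is to use a \emph{different} reference mesh at each step: at step $j$ take $\delta_j\approx\kappa\,{\rm osc}(f,\mathcal{T}_j)$, producing $\mathcal{T}^\star_j$ with $\#\mathcal{T}^\star_j-\#\mathcal{T}_0\lesssim\|f\|_{\mathcal{A}_0^s}^{1/s}{\rm osc}(f,\mathcal{T}_j)^{-1/s}$, and then sum what is now a geometric series, using the oscillation contraction ${\rm osc}(f,\mathcal{T}_{j+1})\leq\rho\,{\rm osc}(f,\mathcal{T}_j)$ and the fact that ${\rm osc}(f,\mathcal{T}_{K-1})>\varepsilon$ before termination. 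Alternatively, one can cite the direct tree-based construction of Binev--Dahmen--DeVore, which avoids the iteration entirely, as the paper does.
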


We now prove that the approximation $p_{k}$ generated by the AMFEM
concerning the stress variable converges to $p$ in a weighted norm
with the same rate $(\#\mathcal{T}_{k}-\#\mathcal{T}_{0})^{-s}$ as
the best approximation described by $\mathbb{A}_{s}$ up to a
multiplicative constant. We need to count elements added by handling
hanging nodes to keep mesh conformity (see Lemma 
\ref{lem 2.2}), as well as
those marked by the estimator (the cardinality of
$\mathcal{M}_{k}$). To this end, we impose more stringent
requirements than for convergence of the AMFEM.\\

\noindent\textbf{Assumption\ 6.1 }({\rm Optimality}).\ {\it we assume the
following properties of the AMFEM:\\
(a)\ The marking parameter $\theta$ satisfies
$\theta\in(0,\theta_{*})$ with
\begin{equation*}
\theta_{*}^{2}=\frac{C_{2}}{1+C_{1}(1+2\Lambda_{1}{\rm
osc}_{\mathcal{T}_{0}}^{2}(A^{-1},\mathcal{T}_{0}))};
\end{equation*}
(b)\ Procedure $MARK$ selects a set $\mathcal{M}_{k}$ of marked
elements with minimal cardinality;\\
(c)\ The distribution of refinement edges on $\mathcal{T}_{0}$
satisfies condition (b) of section 4 in \cite{Stevenson1}. }

The limit value $\theta_{*}$ depends on the ratio
$(C_{2}/C_{1})^{1/2}\leq1$, which quantifies the quality of
approximation to the stress variable of estimator
$\eta_{\mathcal{T}_{k}}(p_{k},\mathcal{T}_{k})$, as well as the
oscillation ${\rm osc}_{\mathcal{T}_{0}}(A^{-1},\mathcal{T}_{0})$ of
coefficient matrix of the PDEs over $\mathcal{T}_{0}$.

The following lemma establishes a link between nonlinear
approximation theory and the AMFEM through the D\"{o}rfler marking
strategy. Roughly speaking, we prove that, if an approximation
satisfies a suitable total error reduction from $\mathcal{T}_{H}$ to
$\mathcal{T}_{h}$ ($\mathcal{T}_{h}$ is a refinement of
$\mathcal{T}_{H}$), the error indicators of the coarser solutions
must satisfy a D\"{o}rfler property on the set $\mathcal{R}$ of
refined elements. In other words, the total error reduction and
D\"{o}rfler marking are intimately connected.

\begin{lemma}\label{lem 6.7}({\rm Optimality marking})  Assume that
the marking parameter $\theta$ verifies (a) of Assumption 6.1, and
that $f$ is a piecewise polynomial of degree $\leq l$ on
$\mathcal{T}_{H}$. Let $\mathcal{T}_{H}$ be an shape regular
triangulation of $\Omega$, $(p_{H},u_{H})\in M_{H}\times L_{H}$ be a
pair of discrete solutions of (\ref{quasi-optimality 2.3}). Set $\mu
:=\frac{1}{2}(1-\frac{\theta^{2}}{\theta_{*}^{2}})>0$, and let
$\mathcal{T}_{h_{*}}$ be any refinement of $\mathcal{T}_{H}$ such
that a pair of discrete solutions $(p_{h_{*}},u_{h_{*}})\in
M_{h_{*}}\times L_{h_{*}}$ satisfies
\begin{equation}\label{Lemma 6.7}
\mathcal{E}_{h_{*}}^{2}+{\rm
osc}_{\mathcal{T}_{h_{*}}}^{2}(p_{h_{*}},\mathcal{T}_{h_{*}})\leq\mu\{
\mathcal{E}_{H}^{2}+{\rm
osc}_{\mathcal{T}_{H}}^{2}(p_{H},\mathcal{T}_{H})\}.
\end{equation}
Then the set $\mathcal{R}
:=\mathcal{R}_{\mathcal{T}_{H}\rightarrow\mathcal{T}_{h_{*}}}$
satisfies the D\"{o}rfler property
\begin{equation*}
\eta_{\mathcal{T}_{H}}(p_{H},\mathcal{R})\geq
\theta\eta_{\mathcal{T}_{H}}(p_{H},\mathcal{T}_{H}).
\end{equation*}
\end{lemma}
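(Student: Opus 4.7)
The plan is to bound $\mathcal{E}_H^2+{\rm osc}_{\mathcal{T}_H}^2(p_H,\mathcal{T}_H)$ from above by a constant multiple of $\eta_{\mathcal{T}_H}^2(p_H,\mathcal{R})$, and then extract the D\"orfler property via the global lower bound of Theorem \ref{thm 3.5}. The hypothesis that $f$ is piecewise polynomial of degree $\leq l$ on $\mathcal{T}_H$ forces $f=f_H=f_{h_*}$; hence both ${\rm osc}(f,\mathcal{T}_H)=0$ and ${\rm osc}(f_{h_*},\mathcal{T}_H)=0$, and ${\rm div}\,p_H={\rm div}\,p_{h_*}=-f$ eliminates the divergence contributions to $\mathcal{E}_H$ and $\mathcal{E}_{h_*}$. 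Consequently the quasi-orthogonality identity (\ref{Lemma 4.5.2}) collapses to
\begin{equation*}
\mathcal{E}_H^2-\mathcal{E}_{h_*}^2=||A^{-1/2}(p_{h_*}-p_H)||_{L^2(\Omega)}^2,
\end{equation*}
and the discrete upper bound (Corollary \ref{cor 6.2}), again with vanishing $f$-oscillation and using $\mathcal{F}_H=\mathcal{R}$, yields the key estimate
\begin{equation*}
||A^{-1/2}(p_{h_*}-p_H)||_{L^2(\Omega)}^2\leq C_1\eta_{\mathcal{T}_H}^2(p_H,\mathcal{R}).
\end{equation*}

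For the oscillation term, I split ${\rm osc}_{\mathcal{T}_H}^2(p_H,\mathcal{T}_H)$ over refined and unrefined elements. On the refined set, Remark \ref{rem 2.1} yields ${\rm osc}_{\mathcal{T}_H}^2(p_H,\mathcal{R})\leq\eta_{\mathcal{T}_H}^2(p_H,\mathcal{R})$. On $\mathcal{T}_H\cap\mathcal{T}_{h_*}$, I invoke the perturbation of oscillation (Corollary \ref{cor 4.11}) together with the preceding discrete upper bound to obtain
\begin{equation*}
{\rm osc}_{\mathcal{T}_H}^2(p_H,\mathcal{T}_H\cap\mathcal{T}_{h_*})\leq 2\,{\rm osc}_{\mathcal{T}_{h_*}}^2(p_{h_*},\mathcal{T}_{h_*})+2\Lambda_1 C_1\,{\rm osc}_{\mathcal{T}_0}^2(A^{-1},\mathcal{T}_0)\,\eta_{\mathcal{T}_H}^2(p_H,\mathcal{R}).
\end{equation*}

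Assembling these with the Pythagorean identity gives
\begin{equation*}
\mathcal{E}_H^2+{\rm osc}_{\mathcal{T}_H}^2(p_H,\mathcal{T}_H)\leq 2\bigl(\mathcal{E}_{h_*}^2+{\rm osc}_{\mathcal{T}_{h_*}}^2(p_{h_*},\mathcal{T}_{h_*})\bigr)+\bigl(1+C_1(1+2\Lambda_1{\rm osc}_{\mathcal{T}_0}^2(A^{-1},\mathcal{T}_0))\bigr)\eta_{\mathcal{T}_H}^2(p_H,\mathcal{R}).
\end{equation*}
The hypothesis (\ref{Lemma 6.7}) bounds the first summand on the right by $2\mu(\mathcal{E}_H^2+{\rm osc}_H^2)$, so rearranging and applying the global lower bound $C_2\eta_{\mathcal{T}_H}^2(p_H,\mathcal{T}_H)\leq\mathcal{E}_H^2+{\rm osc}_H^2$ from Theorem \ref{thm 3.5}, together with the definition of $\theta_*^2$ in Assumption~6.1(a), produces
\begin{equation*}
(1-2\mu)\theta_*^2\,\eta_{\mathcal{T}_H}^2(p_H,\mathcal{T}_H)\leq\eta_{\mathcal{T}_H}^2(p_H,\mathcal{R}).
\end{equation*}
The choice $\mu=\tfrac12(1-\theta^2/\theta_*^2)$ then gives $(1-2\mu)\theta_*^2=\theta^2$, which is the claimed D\"orfler inequality.

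The principal obstacle, and where care is required, is the bookkeeping of constants through Corollaries \ref{cor 4.11} and \ref{cor 6.2}: the factor $2$ appearing in the oscillation perturbation forces the absorption step $\mathcal{E}_{h_*}^2+2\,{\rm osc}_{h_*}^2\leq 2(\mathcal{E}_{h_*}^2+{\rm osc}_{h_*}^2)$, and it is precisely this factor together with the constant $C_1$ from the discrete upper bound that pins down the denominator $1+C_1(1+2\Lambda_1{\rm osc}_{\mathcal{T}_0}^2(A^{-1},\mathcal{T}_0))$ in $\theta_*^2$, thereby constraining the admissible range of $\theta$.
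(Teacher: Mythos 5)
Your proposal is correct and follows essentially the same route as the paper: the vanishing of $f$-oscillation, the Pythagorean collapse of quasi-orthogonality (\ref{Lemma 4.5.2}), the discrete upper bound (Theorem \ref{thm 6.1}/Corollary \ref{cor 6.2}) over $\mathcal{R}$, Remark \ref{rem 2.1} on refined elements, Corollary \ref{cor 4.11} on unrefined elements, and the global lower bound (Theorem \ref{thm 3.5}), assembled to yield the denominator $1+C_1(1+2\Lambda_1{\rm osc}_{\mathcal{T}_0}^2(A^{-1},\mathcal{T}_0))$ and hence $\theta_*^2$. The only cosmetic difference is that you organize the algebra as an upper bound on $\mathcal{E}_H^2+{\rm osc}_H^2$ and then divide, whereas the paper starts the chain from $(1-2\mu)C_2\eta_{\mathcal{T}_H}^2(p_H,\mathcal{T}_H)$; the inequalities and constants are identical.
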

\begin{proof} Since $f$ is
a piecewise polynomial of degree $\leq l$ on $\mathcal{T}_{H}$, it
holds $${\rm osc}(f,\mathcal{T}_{H})=0 \ \ \text{ and }\ \ {\rm
osc}(f_{h_{*}},\mathcal{T}_{h_{*}})=0,$$
which imply  
$$||H{\rm
div}(p-p_{H})||_{L^{2}(\Omega)}=0\ \ \text{ and }\ \  ||h_{*}{\rm
div}(p-p_{h_{*}})||_{L^{2}(\Omega)}=0.$$ 
These two relations, together with   the lower bound (\ref{Lemma 3.5}),
the condition (\ref{Lemma 6.7}) and  the quasi-orthogonality
(\ref{Lemma 4.5.2}), yield
\begin{equation}\label{Lemma 6.7.1}
\begin{array}{lll}
(1-2\mu)C_{2}\eta_{\mathcal{T}_{H}}^{2}(p_{H},\mathcal{T}_{H})&\leq&(1-2\mu)
\{\mathcal{E}_{H}^{2}+{\rm
osc}_{\mathcal{T}_{H}}^{2}(p_{H},\mathcal{T}_{H})\}\vspace{2mm}\\
&\leq&||A^{-1/2}(p-p_{H})||_{L^{2}(\Omega)}^{2}-
||A^{-1/2}(p-p_{h_{*}})||_{L^{2}(\Omega)}^{2}\vspace{2mm}\\
&&+||H{\rm div}(p-p_{H})||_{L^{2}(\Omega)}^{2}-||h_{*}{\rm
div}(p-p_{h_{*}})||_{L^{2}(\Omega)}^{2}\vspace{2mm}\\
&&{+\rm osc}_{\mathcal{T}_{H}}^{2}(p_{H},\mathcal{T}_{H})-2{\rm
osc}_{\mathcal{T}_{h_{*}}}^{2}(p_{h_{*}},\mathcal{T}_{h_{*}}).
\end{array}
\end{equation}

We estimate separately the error and oscillation terms. By the quasi-orthogonality
(\ref{Lemma 4.5.2}) and discrete upper bound (\ref{Theorem 6.1.1}),
we get
\begin{equation}\label{Lemma 6.7.2}
\begin{array}{lll}
||A^{-1/2}(p-p_{H})||_{L^{2}(\Omega)}^{2}&-&
||A^{-1/2}(p-p_{h_{*}})||_{L^{2}(\Omega)}^{2}+||H{\rm
div}(p-p_{H})||_{L^{2}(\Omega)}^{2}\vspace{2mm}\\
&-&||h_{*}{\rm div}(p-p_{h_{*}})||_{L^{2}(\Omega)}^{2}=
||A^{-1/2}(p_{h_{*}}-p_{H})||_{L^{2}(\Omega)}^{2}\vspace{2mm}\\
&\leq& C_{1}\eta_{\mathcal{T}_{H}}^{2}(p_{H},\mathcal{R})
\end{array}
\end{equation}

For the oscillation term we argue according to whether an element
$T\in\mathcal{T}_{H}$ belongs to the set of refined elements
$\mathcal{R}$ or not. For $T\in\mathcal{R}$ we use the dominance
${\rm
osc}_{\mathcal{T}_{H}}^{2}(p_{H},T)\leq\eta_{\mathcal{T}_{H}}^{2}(p_{H},T)$
 (see Remark 
\ref{rem 2.1}). For $T\in\mathcal{T}_{H}\cap\mathcal{T}_{h_{*}}$,
Corollary \ref{cor 4.11} (Perturbation of oscillation) together with
$\sigma_{H}=p_{H}$ and $\sigma_{h}=p_{h_{*}}$ yields
\begin{equation}\label{Lemma 6.7.3}
\begin{array}{lll}
&\ &{\rm
osc}_{\mathcal{T}_{H}}^{2}(p_{H},\mathcal{T}_{H}\cap\mathcal{T}_{h_{*}})-2{\rm
osc}_{\mathcal{T}_{h_{*}}}^{2}(p_{h_{*}},\mathcal{T}_{H}\cap\mathcal{T}_{h_{*}})\vspace{2mm}\\
&\ &\ \ \leq2\Lambda_{1}{\rm
osc}_{\mathcal{T}_{0}}^{2}(A^{-1},\mathcal{T}_{0})||A^{-1/2}(p_{h_{*}}-p_{H})||_{L^{2}(\Omega)}^{2}.
\end{array}
\end{equation}

Combining (\ref{Lemma 6.7.2}) and (\ref{Lemma 6.7.3}) we infer that
\begin{equation}\label{Lemma 6.7.4}
\begin{array}{lll}
&\ &{\rm osc}_{\mathcal{T}_{H}}^{2}(p_{H},\mathcal{T}_{H})-2{\rm
osc}_{\mathcal{T}_{h_{*}}}^{2}(p_{h_{*}},\mathcal{T}_{h_{*}})\vspace{2mm}\\
&\ &={\rm osc}_{\mathcal{T}_{H}}^{2}(p_{H},\mathcal{R})+{\rm
osc}_{\mathcal{T}_{H}}^{2}(p_{H},\mathcal{T}_{H}\cap\mathcal{T}_{h_{*}})\vspace{2mm}\\
&\ &\ \ \ \ -2{\rm
osc}_{\mathcal{T}_{h_{*}}}^{2}(p_{h_{*}},\mathcal{T}_{H}\cap\mathcal{T}_{h_{*}})-2{\rm
osc}_{\mathcal{T}_{h_{*}}}^{2}(p_{h_{*}},\mathcal{T}_{h_{*}}\setminus\mathcal{T}_{H})\vspace{2mm}\\
&\ &\leq{\rm osc}_{\mathcal{T}_{H}}^{2}(p_{H},\mathcal{R})+2\Lambda_{1}{\rm
osc}_{\mathcal{T}_{0}}^{2}(A^{-1},\mathcal{T}_{0})||A^{-1/2}(p_{h_{*}}-p_{H})||_{L^{2}(\Omega)}^{2}\vspace{2mm}\\
&\ &\leq(1+2C_{1}\Lambda_{1}{\rm
osc}_{\mathcal{T}_{0}}^{2}(A^{-1},\mathcal{T}_{0}))\eta_{\mathcal{T}_{H}}^{2}(p_{H},\mathcal{R}).
\end{array}
\end{equation}
From (\ref{Lemma 6.7.1}), (\ref{Lemma 6.7.2}), and (\ref{Lemma
6.7.4}), we finally deduce that
\begin{equation*}
\eta_{\mathcal{T}_{H}}^{2}(p_{H},\mathcal{R})\geq\frac{(1-2\mu)C_{2}}{1+C_{1}(1+2\Lambda_{1}
{\rm
osc}_{\mathcal{T}_{0}}^{2}(A^{-1},\mathcal{T}_{0}))}\eta_{\mathcal{T}_{H}}^{2}(p_{H},\mathcal{T}_{H})=
\theta^{2}\eta_{\mathcal{T}_{H}}^{2}(p_{H},\mathcal{T}_{H}).
\end{equation*}
In light of the definitions of $\theta_{*},\theta$, and $\mu$, this
concludes the proof.
\end{proof}

The fact that procedure MARK selects the set of marked elements
$\mathcal{M}_{k}$ with minimal cardinality, establishes a link
between the best mesh and triangulations generated by AMFEM, and
forms crucial idea of AFEM (see \cite{Stevenson}). In what follows
we shall use this fact.

\begin{lemma}\label{lem 6.8}({\rm Cardinality of $\mathcal{M}_{k}$}) 
Assume that the marking parameter $\theta$ verifies (a) of
Assumption 6.1, and procedure MARK satisfies (b) of Assumption 6.1,
and that $f$ is the piecewise polynomial of degree $\leq l$ onto
$\mathcal{T}_{0}$. Let $(p,u)$ solve the problem
(\ref{quasi-optimality 2.1}), and let
$\{\mathcal{T}_{k};(M_{k},L_{k});(p_{k},u_{k});\mathcal{E}_{k}\}_{k\geq0}$
be the sequence of meshes, finite element spaces, the discrete
solution
produced by the AMFEM, and the stress variable error in weighted norm\\
If $(p,f,A)\in\mathbb{A}_{s}$, then the following estimate is
valid:
\begin{equation}\label{Lemma 6.8.1}
\begin{array}{lll}
\#\mathcal{M}_{k}&\lesssim&(1-\frac{\theta^{2}}{\theta_{*}^{2}})^{-1/2s}
|(p,f,A)|_{s}^{1/s}C_{A}^{1/2s}\{\mathcal{E}_{k}^{2}+ {\rm
osc}_{\mathcal{T}_{k}}^{2}(p_{k},\mathcal{T}_{k})\}^{-1/2s}.
\end{array}
\end{equation}
Furthermore, if $\mathcal{T}_{k+1}$ is a refinement of
$\mathcal{T}_{k}$ obtained by the algorithm $REFINE$ with
$\theta\in(0,\theta_{*})$, then
\begin{equation}\label{Lemma 6.8.2}
\begin{array}{lll}
\#\mathcal{T}_{k+1}-\#\mathcal{T}_{k}\lesssim
(1-\frac{\theta^{2}}{\theta_{*}^{2}})^{-1/2s}
|(p,f,A)|_{s}^{1/s}C_{A}^{1/2s} \{\mathcal{E}_{k}^{2}+{\rm
osc}_{\mathcal{T}_{k}}^{2}(p_{k},\mathcal{T}_{k})\}^{-1/2s}.
\end{array}
\end{equation}
\end{lemma}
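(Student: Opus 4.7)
The plan is to relate the cardinality of $\mathcal{M}_{k}$ to the best approximation in the class $\mathbb{A}_{s}$ through the optimal marking Lemma~\ref{lem 6.7}. Set the tolerance
\[
\varepsilon^{2}:=\mu\bigl(\mathcal{E}_{k}^{2}+{\rm osc}_{\mathcal{T}_{k}}^{2}(p_{k},\mathcal{T}_{k})\bigr),
\]
with $\mu=\frac{1}{2}(1-\theta^{2}/\theta_{*}^{2})>0$ as in Lemma~\ref{lem 6.7}. Since $(p,f,A)\in\mathbb{A}_{s}$, I choose the smallest integer $N\gtrsim(|(p,f,A)|_{s}/\varepsilon)^{1/s}$ and pick a triangulation $\mathcal{T}_{\varepsilon}\in\mathcal{P}_{N}$ with $\#\mathcal{T}_{\varepsilon}-\#\mathcal{T}_{0}\leq N$ realizing $\sigma(N;p,f,A)\leq\varepsilon$. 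By Lemma~\ref{lem 6.4} (Overlay of meshes), the refinement $\mathcal{T}_{*}:=\mathcal{T}_{k}\oplus\mathcal{T}_{\varepsilon}$ of both $\mathcal{T}_{k}$ and $\mathcal{T}_{\varepsilon}$ satisfies $\#\mathcal{T}_{*}-\#\mathcal{T}_{k}\leq\#\mathcal{T}_{\varepsilon}-\#\mathcal{T}_{0}\leq N$.

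The heart of the argument, and the step I expect to be the main obstacle, is to verify that the discrete pair $(p_{*},u_{*})\in M_{*}\times L_{*}$ associated with $\mathcal{T}_{*}$ actually inherits the total-error reduction
\[
\mathcal{E}_{*}^{2}+{\rm osc}_{\mathcal{T}_{*}}^{2}(p_{*},\mathcal{T}_{*})\leq\mu\bigl(\mathcal{E}_{k}^{2}+{\rm osc}_{\mathcal{T}_{k}}^{2}(p_{k},\mathcal{T}_{k})\bigr)=\varepsilon^{2},
\]
so that Lemma~\ref{lem 6.7} may be applied with $\mathcal{T}_{H}=\mathcal{T}_{k}$ and $\mathcal{T}_{h_{*}}=\mathcal{T}_{*}$. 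For this I would invoke the standard Céa-type quasi-best-approximation property of mixed methods (which is available since ${\rm div}\,M_{*}=L_{*}$) to bound $\|A^{-1/2}(p-p_{*})\|$ by the best $M_{\varepsilon}$-approximation, use the commuting diagram of Lemma~\ref{lem 4.2} together with $h_{*}\leq h_{\varepsilon}$ to bound $\|h_{*}{\rm div}(p-p_{*})\|$ by the divergence term in $\sigma(\#\mathcal{T}_{\varepsilon};p,f,A)$, and absorb the oscillation using the monotonicity of $L^{2}$-projection and of $h_{\mathcal{T}}$ under refinement (Remark~\ref{rem 2.1}), plus the perturbation estimate of Corollary~\ref{cor 4.11} applied to the pairs $(p_{\mathcal{T}_{\varepsilon}},p_{*})$. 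Since $f$ is a polynomial of degree $\le l$ on $\mathcal{T}_{0}$, all ${\rm osc}(f,\cdot)$ terms vanish, leaving only constants that I lump into $C_{A}$.

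Once the total-error reduction on $\mathcal{T}_{*}$ is established, Lemma~\ref{lem 6.7} yields D\"orfler's property $\eta_{\mathcal{T}_{k}}(p_{k},\mathcal{R})\geq\theta\,\eta_{\mathcal{T}_{k}}(p_{k},\mathcal{T}_{k})$ on the set $\mathcal{R}=\mathcal{R}_{\mathcal{T}_{k}\to\mathcal{T}_{*}}$. Because Assumption~6.1(b) forces $\mathcal{M}_{k}$ to have minimal cardinality among all subsets of $\mathcal{T}_{k}$ satisfying (\ref{Dorfler property}), we obtain
\[
\#\mathcal{M}_{k}\leq\#\mathcal{R}\leq\#\mathcal{T}_{*}-\#\mathcal{T}_{k}\leq N\lesssim\Bigl(\frac{|(p,f,A)|_{s}}{\varepsilon}\Bigr)^{1/s},
\]
and substituting $\varepsilon^{2}=\mu(\mathcal{E}_{k}^{2}+{\rm osc}_{k}^{2})$ with $\mu^{-1/2s}\lesssim(1-\theta^{2}/\theta_{*}^{2})^{-1/2s}$ produces (\ref{Lemma 6.8.1}).

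For the second estimate (\ref{Lemma 6.8.2}), the module $REFINE$ ensures that $\#\mathcal{T}_{k+1}-\#\mathcal{T}_{k}\lesssim\#\mathcal{M}_{k}$ at a single step (each marked element is bisected a bounded number of times, and by Assumption~6.1(c) the additional elements required to restore conformity are controlled in the spirit of Lemma~\ref{lem 2.2}). Combining this with (\ref{Lemma 6.8.1}) yields the claim, with the implicit constant absorbing the factor $C_{0}$ from $REFINE$.
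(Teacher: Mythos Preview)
Your overall strategy coincides with the paper's: choose $\varepsilon$ proportional to the current total error, extract an optimal mesh $\mathcal{T}_\varepsilon$ from $\mathbb{A}_s$, form the overlay $\mathcal{T}_*=\mathcal{T}_k\oplus\mathcal{T}_\varepsilon$, verify the $\mu$-reduction of the total error on $\mathcal{T}_*$, invoke Lemma~\ref{lem 6.7}, and conclude via minimality of $\mathcal{M}_k$ and Lemma~\ref{lem 6.4}.

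The one substantive difference is the tool you use for the reduction step. You propose a C\'ea-type quasi-best-approximation for mixed methods; the paper instead exploits the \emph{exact} orthogonality~(\ref{Lemma 4.5.2}) of Theorem~\ref{thm 4.5}, which is available here precisely because $f$ being piecewise polynomial on $\mathcal{T}_0$ forces ${\rm osc}(f_{h_*},\mathcal{T}_\varepsilon)=0$. This yields
\[
\|A^{-1/2}(p-p_*)\|_{L^2(\Omega)}^2=\|A^{-1/2}(p-p_\varepsilon)\|_{L^2(\Omega)}^2-\|A^{-1/2}(p_*-p_\varepsilon)\|_{L^2(\Omega)}^2,
\]
so one simultaneously gets $\mathcal{E}_{h_*}^2\le\mathcal{E}_{h_\varepsilon}^2$ (the divergence pieces vanish since ${\rm div}\,p_*={\rm div}\,p_\varepsilon=-f$) and the bound $\|A^{-1/2}(p_*-p_\varepsilon)\|^2\le\|A^{-1/2}(p-p_\varepsilon)\|^2$ needed when perturbing the oscillation, both with constant~$1$. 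Your C\'ea route works too, but introduces an extra multiplicative constant and requires a separate triangle-inequality step to control $\|A^{-1/2}(p_*-p_\varepsilon)\|$. Two minor corrections: for the oscillation bound the paper uses Lemma~\ref{lem 4.9}(\ref{Lemma 4.9.2}) summed over all of $\mathcal{T}_*$ (Corollary~\ref{cor 4.11} covers only $\mathcal{T}_H\cap\mathcal{T}_h$), and $C_A$ is not a generic constant but specifically $C_A=\max\{1+2\Lambda_1^2{\rm osc}_{\mathcal{T}_0}^2(A^{-1},\mathcal{T}_0),\,2\}$, which is why the paper sets $\varepsilon^2=4^{-1}C_A^{-1}\mu(\mathcal{E}_k^2+{\rm osc}_k^2)$ rather than your $\varepsilon^2=\mu(\mathcal{E}_k^2+{\rm osc}_k^2)$.
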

\begin{proof} Let $[\varepsilon^{-1/s}|(p,f,A)|_{s}^{1/s}]$ denote the integer
component of $\varepsilon^{-1/s}|(p,f,A)|_{s}^{1/s}$. We set
$\varepsilon^{2} :=4^{-1}C_{A}^{-1}\mu(\mathcal{E}_{k}^{2}+{\rm
osc}_{\mathcal{T}_{k}}^{2}(p_{k},\mathcal{T}_{k}))$, where
$\mu=\frac{1}{2}(1-\frac{\theta^{2}}{\theta_{*}^{2}})>0$ and
$C_{A}=\max\{1+2\Lambda_{1}^{2}{\rm
osc}_{\mathcal{T}_{0}}^{2}(A^{-1},\mathcal{T}_{0}),2\}$, and set
$N_{\varepsilon} :=[\varepsilon^{-1/s}|(p,f,A)|_{s}^{1/s}]+1$.
Recall that
\begin{equation*}
\sigma(N_{\varepsilon}+\#\mathcal{T}_{0}-1;p,f,A)
:=\inf_{\mathcal{T}\in\mathcal{P}_{N_{\varepsilon}+
\#\mathcal{T}_{0}-1}}\{\mathcal{E}_{\mathcal{T}}^{2}+{\rm
osc}_{\mathcal{T}}^{2}(p_{\mathcal{T}}, \mathcal{T})\}^{1/2},
\end{equation*}
where $\mathcal{E}_{\mathcal{T}}^{2}
:=||A^{-1/2}(p-p_{\mathcal{T}})||_{L^{2}(\Omega)}^{2}
+||h_{\mathcal{T}}{\rm
div}(p-p_{\mathcal{T}})||_{L^{2}(\Omega)}^{2}$,\\
and
\begin{equation*}
\mathcal{E}_{h_{\varepsilon}}^{2}
:=||A^{-1/2}(p-p_{\varepsilon})||_{L^{2}(\Omega)}^{2}+
||h_{\varepsilon}{\rm div}(p-p_{\varepsilon})||_{L^{2}(\Omega)}^{2},
\end{equation*}
where $p_{\varepsilon}$ is the discrete flux approximation to $p
:=A\nabla u$ with respect to the mesh $\mathcal{T}_{\varepsilon}$,
and $h_{\varepsilon}$ is the mesh-size function with respect to
$\mathcal{T}_{\varepsilon}$.

Since there exists
$\mathcal{T}_{\varepsilon}\in\mathcal{P}_{N_{\varepsilon}+\#\mathcal{T}_{0}-1}$
with $\#\mathcal{T}_{\varepsilon}\leq
N_{\varepsilon}+\#\mathcal{T}_{0}-1$ such that
\begin{equation*}
\{\mathcal{E}_{h_{\varepsilon}}^{2}+{\rm
osc}_{\mathcal{T}_{\varepsilon}}^{2}(p_{\varepsilon},
\mathcal{T}_{\varepsilon})\}^{1/2}\leq(1+\tilde{\varepsilon})\sigma(N_{\varepsilon}
+\#\mathcal{T}_{0}-1;p,f,A),
\end{equation*}
where $\tilde{\varepsilon} :=\min\{1,\varepsilon\}$. This inequality
leads to
\begin{equation}\label{Lemma 6.8.3}
\begin{array}{lll}
&\ &\ N_{\varepsilon}^{s}\{\mathcal{E}_{h_{\varepsilon}}^{2}+{\rm
osc}_{\mathcal{T}_{\varepsilon}}^{2}(p_{\varepsilon},
\mathcal{T}_{\varepsilon})\}^{1/2}\vspace{2mm}\\
&\ &\ \ \leq(N_{\varepsilon}+\#\mathcal{T}_{0}-1)^{s}
\{\mathcal{E}_{h_{\varepsilon}}^{2}+{\rm
osc}_{\mathcal{T}_{\varepsilon}}^{2}(p_{\varepsilon},
\mathcal{T}_{\varepsilon})\}^{1/2}\vspace{2mm}\\
&\ &\ \
\leq(1+\tilde{\varepsilon})(N_{\varepsilon}+\#\mathcal{T}_{0}-1)^{s}\sigma(N_{\varepsilon}
+\#\mathcal{T}_{0}-1;p,f,A)\vspace{2mm}\\
&\ &\ \
\leq(1+\tilde{\varepsilon})\sup_{N>0}N^{s}\sigma(N;p,f,A)=(1+\tilde{\varepsilon})|(p,f,A)|_{s}.
\end{array}
\end{equation}

From the above inequality (\ref{Lemma 6.8.3}), we obtain
\begin{equation}\label{Lemma 6.8.4}
\{\mathcal{E}_{h_{\varepsilon}}^{2}+{\rm
osc}_{\mathcal{T}_{\varepsilon}}^{2}(p_{\varepsilon},
\mathcal{T}_{\varepsilon})\}^{1/2}\leq
\frac{(1+\tilde{\varepsilon})|(p,f,A)|_{s}}{N_{\varepsilon}^{s}}\leq2\varepsilon
\end{equation}
and
\begin{equation}\label{Lemma 6.8.5}
\#\mathcal{T}_{\varepsilon}-\#\mathcal{T}_{0}\leq N_{\varepsilon}-1
\leq\varepsilon^{-1/s}|(p,f,A)|_{s}^{1/s}.
\end{equation}

Let $\mathcal{T}_{*}
:=\mathcal{T}_{\varepsilon}\oplus\mathcal{T}_{k}$ be the overlay of
$\mathcal{T}_{\varepsilon}$ and $\mathcal{T}_{k}$, $h_{*}$ denote
the mesh-size function with respect to $\mathcal{T}_{*}$, and
$(p_{*},u_{*})$ be a pair of discrete solutions onto
$\mathcal{T}_{*}$. We shall show that there is a reduction with a
factor $\mu$ of the total error between $p_{*}$ and $p_{k}$. Notice
that $\mathcal{T}_{*}$ is a refinement of
$\mathcal{T}_{\varepsilon}$, and since $f$ is the piecewise
polynomial of degree $\leq l$ onto $\mathcal{T}_{0}$. Recall that
$\mathcal{E}_{h_{*}}^{2}
:=||A^{-1/2}(p-p_{*})||_{L^{2}(\Omega)}^{2}+ ||h_{*}{\rm
div}(p-p_{*})||_{L^{2}(\Omega)}^{2}$, by the quasi-orthogonality
(\ref{Lemma 4.5.2}) and ${\rm div}(p_{*}-p_{\varepsilon})=0$, we
get
\begin{equation}\label{Lemma 6.8.6}
\begin{array}{lll}
&\ &\mathcal{E}_{h_{*}}^{2}+{\rm osc}_{\mathcal{T}_{*}}^{2}(p_{*},
\mathcal{T}_{*})=||A^{-1/2}(p-p_{\varepsilon})||_{L^{2}(\Omega)}^{2}-
||A^{-1/2}(p_{*}-p_{\varepsilon})||_{L^{2}(\Omega)}^{2}\vspace{2mm}\\
&\ &\ \ +||h_{*}{\rm
div}(p-p_{\varepsilon})||_{L^{2}(\Omega)}^{2}-||h_{*}{\rm
div}(p_{*}-p_{\varepsilon})||_{L^{2}(\Omega)}^{2}+{\rm
osc}_{\mathcal{T}_{*}}^{2}(p_{*}, \mathcal{T}_{*})\vspace{2mm}\\
&\ &\ \ \leq \mathcal{E}_{h_{\varepsilon}}^{2}+{\rm
osc}_{\mathcal{T}_{*}}^{2}(p_{*}, \mathcal{T}_{*}).
\end{array}
\end{equation}

By the second inequality (\ref{Lemma 4.9.2}) of Lemma \ref{lem 4.9} with
$p_{\varepsilon},p_{*}\in M_{h_{*}}$, for all $T\in\mathcal{T}_{*}$,
we have
\begin{equation*}
{\rm osc}_{\mathcal{T}_{*}}^{2}(p_{*}, T)\leq2{\rm
osc}_{\mathcal{T}_{*}}^{2}(p_{\varepsilon},
T)+2\bar{\Lambda}_{1}^{2}{\rm osc}_{\mathcal{T}_{*}}^{2}(A^{-1},
T)||A^{-1/2}(p_{*}-p_{\varepsilon})||_{L^{2}(\omega_{T})}^{2}.
\end{equation*}
Summing on $\mathcal{T}_{*}$, the monotonicity of the data
oscillation (see Remarks 
\ref{rem 2.1} and \ref{rem 4.7}), we get
\begin{equation}
\begin{array}{lll}\label{6.8.7}
{\rm osc}_{\mathcal{T}_{*}}^{2}(p_{*}, \mathcal{T}_{*})&\leq&2{\rm
osc}_{\mathcal{T}_{*}}^{2}(p_{\varepsilon},
\mathcal{T}_{*})+2\Lambda_{1}^{2}{\rm
osc}_{\mathcal{T}_{*}}^{2}(A^{-1},
\mathcal{T}_{*})||A^{-1/2}(p_{*}-p_{\varepsilon})||_{L^{2}(\Omega)}^{2}\vspace{2mm}\\
&\leq&2{\rm osc}_{\mathcal{T}_{\varepsilon}}^{2}(p_{\varepsilon},
\mathcal{T}_{\varepsilon})+2\Lambda_{1}^{2}{\rm
osc}_{\mathcal{T}_{0}}^{2}(A^{-1},
\mathcal{T}_{0})||A^{-1/2}(p_{*}-p_{\varepsilon})||_{L^{2}(\Omega)}^{2}\vspace{2mm}\\
&\leq&2{\rm osc}_{\mathcal{T}_{\varepsilon}}^{2}(p_{\varepsilon},
\mathcal{T}_{\varepsilon})+2\Lambda_{1}^{2}{\rm
osc}_{\mathcal{T}_{0}}^{2}(A^{-1}, \mathcal{T}_{0})\vspace{2mm}\\
&\ & \times(||A^{-1/2}(p-p_{\varepsilon})||_{L^{2}(\Omega)}^{2}-
||A^{-1/2}(p-p_{*})||_{L^{2}(\Omega)}^{2}).
\end{array}
\end{equation}
A combination of (\ref{Lemma 6.8.4}), (\ref{Lemma 6.8.6}), and
(\ref{6.8.7}) yields
\begin{equation*}
\begin{array}{lll}
\mathcal{E}_{h_{*}}^{2}+{\rm osc}_{\mathcal{T}_{*}}^{2}(p_{*},
\mathcal{T}_{*})&\leq& C_{A}(\mathcal{E}_{h_{\varepsilon}}^{2}+{\rm
osc}_{\mathcal{T}_{\varepsilon}}^{2}(p_{\varepsilon},
\mathcal{T}_{\varepsilon}))\vspace{2mm}\\
&\leq&4\varepsilon^{2}C_{A}=\mu\{\mathcal{E}_{k}^{2}+{\rm
osc}_{\mathcal{T}_{k}}^{2}(p_{k}, \mathcal{T}_{k})\}.
\end{array}
\end{equation*}

Hence, we deduce from optimality marking (Lemma \ref{lem 6.7}) that the subset
$\mathcal{R}
:=\mathcal{R}_{\mathcal{T}_{k}\rightarrow\mathcal{T}_{*}}\subset\mathcal{T}_{k}$
verifies the D\"{o}rfler property (\ref{Dorfler property}) for
$\theta<\theta_{*}$. The fact that procedure MARK selects a subset
$\mathcal{M}_{k}\subset\mathcal{T}_{k}$ with minimal cardinality
satisfying the same property (\ref{Dorfler property}), and
(\ref{Lemma 6.8.5}) leads to
\begin{equation*}
\begin{array}{lll}
\#\mathcal{M}_{k}&\leq&\#\mathcal{R}\leq\#\mathcal{T}_{*}-\#\mathcal{T}_{k}\leq
\#\mathcal{T}_{\varepsilon}+\#\mathcal{T}_{k}-\#\mathcal{T}_{0}-\#\mathcal{T}_{k}\vspace{2mm}\\
&=&\#\mathcal{T}_{\varepsilon}-\#\mathcal{T}_{0}\leq|(p,f,A)|_{s}^{1/s}\varepsilon^{-1/s}\vspace{2mm}\\
&\leq&(8C_{A})^{1/2s}(1-\frac{\theta^{2}}{\theta_{*}^{2}})^{-1/2s}
|(p,f,A)|_{s}^{1/s}\{SE_{k}^{2}+{\rm
osc}_{\mathcal{T}_{k}}^{2}(p_{k},\mathcal{T}_{k})\}^{-1/2s},
\end{array}
\end{equation*}
which implies the desired result (\ref{Lemma 6.8.1}). In the third
step above, we have used the overlay of two meshes (Lemma \ref{lem 6.4}).

The second assertion (\ref{Lemma 6.8.2}) follows from
$\#\mathcal{T}_{k+1}-\#\mathcal{T}_{k}\lesssim\#\mathcal{M}_{k}$ and
the first result (\ref{Lemma 6.8.1}).
\end{proof}

\begin{theorem}\label{thm 6.9}  Assume that $f$ is a piecewise
polynomial of degree $\leq l$ onto $\mathcal{T}_{0}$, then the
algorithm AMFEM will terminate in finite steps for a given tolerance
$\varepsilon$. Furthermore, set the algorithm AMFEM terminating in
the $N-$th step, and denote by $\mathcal{T}_{N}$ the triangulation
obtained in the $N-$th step. Let $(p,f,A)\in\mathbb{A}_{s}$, and
$\Theta(s,\theta)
:=(1-\frac{\theta^{2}}{\theta_{*}^{2}})^{-1/2s}\frac{1}{1-\alpha^{1/s}}$
describes the asymptotics of the AMFEM as
$\theta\rightarrow\theta_{*},0$ or $s\rightarrow0$. Then there
exists a constant $C$, depending on data, the refinement depth $b$,
and $\mathcal{T}_{0}$, but independent of $s$, such that
\begin{equation}\label{Lemma 6.9.1}
\#\mathcal{T}_{N}-\#\mathcal{T}_{0}\lesssim
C\Theta(s,\theta)|(p,f,A)|_{s}^{1/s}\varepsilon^{-1/s}.
\end{equation}
Furthermore, it holds
\begin{equation}\label{Lemma 6.9.2}
\{\mathcal{E}_{N}^{2}+{\rm
osc}_{\mathcal{T}_{N}}^{2}(p_{N},\mathcal{T}_{N})\}^{1/2}\lesssim
C^{s}\Theta^{s}(s,\theta)|(p,f,A)|_{s}(\#\mathcal{T}_{N}-\#\mathcal{T}_{0})^{-s}.
\end{equation}
\end{theorem}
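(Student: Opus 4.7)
The plan is to chain together the contraction property (Theorem 5.1), the optimal cardinality estimate for the marked set (Lemma 6.8), and the complexity bound for REFINE (Lemma 2.2), exploiting the fact that $f$ being a piecewise polynomial of degree $\leq l$ on $\mathcal{T}_0$ forces $\mathrm{osc}(f,\mathcal{T}_k)=0$ for all $k$. Under this oscillation-free assumption, Theorem 5.1 degenerates to pure contraction
\[
\mathcal{E}_{k+1}^{2}+\gamma\eta_{k+1}^{2}\leq\alpha^{2}(\mathcal{E}_{k}^{2}+\gamma\eta_{k}^{2}),
\]
so that $\mathcal{E}_{k}^{2}+\gamma\eta_{k}^{2}\leq\alpha^{2k}(\mathcal{E}_{0}^{2}+\gamma\eta_{0}^{2})\to 0$. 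In particular $\eta_{k}\to 0$, which forces termination in finitely many steps for any prescribed $\varepsilon>0$. This disposes of the first assertion.

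For the cardinality bound (\ref{Lemma 6.9.1}), I would apply Lemma \ref{lem 2.2} to obtain $\#\mathcal{T}_{N}-\#\mathcal{T}_{0}\leq C_{0}\sum_{j=0}^{N-1}\#\mathcal{M}_{j}$. Invoking Lemma \ref{lem 6.8}(\ref{Lemma 6.8.1}) together with the equivalence $\mathcal{E}_{j}^{2}+\mathrm{osc}_{j}^{2}\approx\mathcal{E}_{j}^{2}+\gamma\eta_{j}^{2}$ stated in (1.2), every marked set satisfies
\[
\#\mathcal{M}_{j}\lesssim\Bigl(1-\tfrac{\theta^{2}}{\theta_{*}^{2}}\Bigr)^{-1/2s}|(p,f,A)|_{s}^{1/s}\bigl(\mathcal{E}_{j}^{2}+\gamma\eta_{j}^{2}\bigr)^{-1/2s}.
\]
The key observation now is that iterating the contraction from level $j$ to level $N-1$ gives $\mathcal{E}_{N-1}^{2}+\gamma\eta_{N-1}^{2}\leq\alpha^{2(N-1-j)}(\mathcal{E}_{j}^{2}+\gamma\eta_{j}^{2})$, hence
\[
(\mathcal{E}_{j}^{2}+\gamma\eta_{j}^{2})^{-1/2s}\leq\alpha^{(N-1-j)/s}(\mathcal{E}_{N-1}^{2}+\gamma\eta_{N-1}^{2})^{-1/2s}.
\]
Summing the geometric series $\sum_{j=0}^{N-1}\alpha^{(N-1-j)/s}\leq\tfrac{1}{1-\alpha^{1/s}}$ and using the pre-termination lower bound $\eta_{N-1}\geq\varepsilon$ (so that $\mathcal{E}_{N-1}^{2}+\gamma\eta_{N-1}^{2}\geq\gamma\varepsilon^{2}$) collapses everything to the desired $\Theta(s,\theta)|(p,f,A)|_{s}^{1/s}\varepsilon^{-1/s}$ bound.

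For the rate (\ref{Lemma 6.9.2}), I would use the termination criterion $\eta_{N}<\varepsilon$ with the equivalence $\mathcal{E}_{N}^{2}+\mathrm{osc}_{N}^{2}\approx\eta_{N}^{2}$ to conclude $(\mathcal{E}_{N}^{2}+\mathrm{osc}_{N}^{2})^{1/2}\lesssim\varepsilon$, then invert (\ref{Lemma 6.9.1}) to get $\varepsilon\lesssim C^{s}\Theta^{s}(s,\theta)|(p,f,A)|_{s}(\#\mathcal{T}_{N}-\#\mathcal{T}_{0})^{-s}$. The delicate point in this plan is the matching of indices so that both (i) the contraction supplies a geometric factor $\alpha^{(N-1-j)/s}$ summable to $1/(1-\alpha^{1/s})$, and (ii) the last pre-termination estimator $\eta_{N-1}\geq\varepsilon$ produces the correct power of $\varepsilon$; an off-by-one misuse of the termination level would either lose the $\varepsilon^{-1/s}$ factor or break the geometric summation. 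A second subtlety is that Lemma \ref{lem 6.8} is stated in terms of the total error $\mathcal{E}_{j}^{2}+\mathrm{osc}_{j}^{2}$ rather than the quasi-error $\mathcal{E}_{j}^{2}+\gamma\eta_{j}^{2}$, so the equivalence (1.2) must be applied to switch between the two—this is precisely where the constant $C$ in the statement absorbs the oscillation-estimator equivalence constants together with $C_{A}$, $C_{0}$, and $\gamma$.
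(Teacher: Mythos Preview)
Your proposal is correct and follows essentially the same route as the paper's proof: vanish the oscillation term in Theorem~\ref{thm 5.1} to obtain pure contraction (hence termination), combine Lemma~\ref{lem 2.2} with Lemma~\ref{lem 6.8} and the equivalence $\mathcal{E}_j^2+\mathrm{osc}_j^2\approx\mathcal{E}_j^2+\gamma\eta_j^2$, iterate the contraction down to level $N-1$, sum the geometric series to produce the factor $1/(1-\alpha^{1/s})$, and close with the pre-termination bound $\eta_{N-1}\geq\varepsilon$; the rate \eqref{Lemma 6.9.2} then follows by inverting \eqref{Lemma 6.9.1} and using $\eta_N<\varepsilon$. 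Your explicit identification of the off-by-one hazard and the total-error/quasi-error switch is exactly right and in fact sharper than the paper's own presentation, which somewhat loosely cites the stopping criterion $\eta_N\leq\varepsilon$ when it is really $\eta_{N-1}\geq\varepsilon$ that drives the bound \eqref{Lemma 6.9.1}.
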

\begin{proof} By the contraction property (Theorem \ref{thm 5.1}), and the fact that $f$ is a piecewise
polynomial of degree $\leq l$ onto $\mathcal{T}_{0}$, there exists
$\alpha\in(0,1)$ such that
\begin{equation}\label{Lemma 6.9.3}
\mathcal{E}_{k+1}^{2}+
\gamma\eta_{\mathcal{T}_{k+1}}^{2}(p_{k+1},\mathcal{T}_{k+1})\leq\alpha^{2}(\mathcal{E}_{k}^{2}+
\gamma\eta_{\mathcal{T}_{k}}^{2}(p_{k},\mathcal{T}_{k})).
\end{equation}
The first assertion is a direct consequence of
the above inequality (\ref{Lemma 6.9.3}).\\
Since
\begin{equation*}
\mathcal{E}_{k}^{2}+{\rm
osc}_{\mathcal{T}_{k}}^{2}(p_{k},\mathcal{T}_{k})\approx
\mathcal{E}_{k}^{2}+
\gamma\eta_{\mathcal{T}_{k}}^{2}(p_{k},\mathcal{T}_{k})\approx
\eta_{\mathcal{T}_{k}}^{2}(p_{k},\mathcal{T}_{k}),
\end{equation*}
Combining the complexity of $REFINE$ (Lemma 
\ref{lem 2.2}) and the cardinality
(\ref{Lemma 6.8.1}) of $\mathcal{M}_{k}$, we deduce that
\begin{equation}\label{Lemma 6.9.4}
\#\mathcal{T}_{N}-\#\mathcal{T}_{0}\lesssim\sum\limits_{k=0}^{N-1}\#\mathcal{M}_{k}
\lesssim\beta\sum\limits_{k=0}^{N-1}\{\mathcal{E}_{k}^{2}+{\rm
osc}_{\mathcal{T}_{k}}^{2}(p_{k},\mathcal{T}_{k})\}^{-1/2s},
\end{equation}
where $\beta
:=(1-\frac{\theta^{2}}{\theta_{*}^{2}})^{-1/2s}C_{A}^{1/2s}|(p,f,A)|_{s}^{1/s}$.

From the lower bound (\ref{Lemma 3.5}), we infer that
\begin{equation}\label{Lemma 6.9.5}
\mathcal{E}_{k}^{2}+\gamma{\rm
osc}_{\mathcal{T}_{k}}^{2}(p_{k},\mathcal{T}_{k})\leq\mathcal{E}_{k}^{2}+
\gamma\eta_{\mathcal{T}_{k}}^{2}(p_{k},\mathcal{T}_{k})\leq
(1+\frac{\gamma}{C_{2}})\{\mathcal{E}_{k}^{2}+{\rm
osc}_{\mathcal{T}_{k}}^{2}(p_{k},\mathcal{T}_{k})\}.
\end{equation}

On the other hand, the linear rate $\alpha=\alpha(\theta)<1$ of
convergence for the quasi error implies that for $0\leq k\leq N-1$
\begin{equation}\label{Lemma 6.9.6}
\mathcal{E}_{N-1}^{2}+
\gamma\eta_{\mathcal{T}_{N-1}}^{2}(p_{N-1},\mathcal{T}_{N-1})\leq
\alpha^{2(N-1-k)}\{\mathcal{E}_{k}^{2}+
\gamma\eta_{\mathcal{T}_{k}}^{2}(p_{k},\mathcal{T}_{k})\}.
\end{equation}
We combine the above three inequalities (\ref{Lemma
6.9.4})-(\ref{Lemma 6.9.6}) to obtain
\begin{equation*}
\#\mathcal{T}_{N}-\#\mathcal{T}_{0}\lesssim\beta(1+\frac{\gamma}{C_{2}})^{1/2s}
\{\mathcal{E}_{N-1}^{2}+\gamma\eta_{\mathcal{T}_{N-1}}^{2}(p_{N-1},\mathcal{T}_{N-1})\}^{-1/2s}
\sum\limits_{k=0}^{N-1}\alpha^{k/s}.
\end{equation*}
Since $\alpha<1$, the geometric series is bounded by the constant
$s_{\theta}=1/(1-\alpha^{1/s})$. By recalling
$\eta_{\mathcal{T}_{N-1}}^{2}(p_{N-1},\mathcal{T}_{N-1})\approx
\mathcal{E}_{N-1}^{2}+\gamma\eta_{\mathcal{T}_{N-1}}^{2}(p_{N-1},\mathcal{T}_{N-1})$,
we end up with
\begin{equation}\label{Lemma 6.9.7}
\#\mathcal{T}_{N}-\#\mathcal{T}_{0}\lesssim
s_{\theta}\beta(1+\frac{\gamma}{C_{2}})^{1/2s}
\{\eta_{\mathcal{T}_{N-1}}^{2}(p_{N-1},\mathcal{T}_{N-1})\}^{-1/2s}.
\end{equation}

A combination of the above inequality (\ref{Lemma 6.9.7}) and the
stopping criteria
\begin{equation}\label{Lemma 6.9.9}
\eta_{\mathcal{T}_{N}}^{2}(p_{N},\mathcal{T}_{N})\leq\varepsilon^{2}
\end{equation}
yields the second assertion (\ref{Lemma 6.9.1}).

By raising the second result (\ref{Lemma 6.9.1}) to the $s-$th power
and recording, we obtain
\begin{equation}\label{Lemma 6.9.8}
\varepsilon\lesssim
C^{s}\Theta^{s}(s,\theta)|(p,f,A)|_{s}(\#\mathcal{T}_{N}-\#\mathcal{T}_{0})^{-s}.
\end{equation}
Since $\{\mathcal{E}_{N}^{2}+{\rm
osc}_{\mathcal{T}_{N}}^{2}(p_{N},\mathcal{T}_{N})\}^{1/2}\approx
\eta_{\mathcal{T}_{N}}(p_{N},\mathcal{T}_{N})$, the above inequality
(\ref{Lemma 6.9.8}) and the stopping criteria (\ref{Lemma 6.9.9})
imply the third assertion (\ref{Lemma 6.9.2}).
\end{proof}

Since some results concerning quasi-optimal convergence rate are
obtained under the assumption that $f$ is a piecewise polynomial of
degree $\leq l$ onto $\mathcal{T}_{0}$. By inspiration of these
results, we consider the following algorithm which separates the
oscillation reduction of data $f$ and the quasi error:
\begin{equation*}
\begin{array}{lll}
&{\rm Step}\ 1&\ \displaystyle\ [\mathcal{T}_{H},f_{H}]={\rm
APPROX}(f,\mathcal{T}_{0},\varepsilon/2)\vspace{2mm},\\
&{\rm Step}\ 2\ &\displaystyle\ [\mathcal{T}_{N},(p_{N},u_{N})]={\rm
AMFEM}(\mathcal{T}_{H},f_{H},\varepsilon/2,\theta).
\end{array}
\end{equation*}
The advantage of separating data error and discretization error is
that in second Step 2, oscillation of data $f$ is always zero since
the input data $f_{H}$ is piecewise polynomial of degree $\leq l$
over the initial mesh $\mathcal{T}_{H}$ for AMFEM.

We are now at the position to show the quasi-optimal convergence
rate.

\begin{theorem}\label{thm 6.10}  ({\rm Quasi-optimal convergence rate}) 
For any $f\in L^{2}(\Omega)$, a shape regular triangulation
$\mathcal{T}_{0}$ and a tolerance $\varepsilon>0$. Let
$\theta\in(0,\theta_{*})$, $[\mathcal{T}_{N},(p_{N},u_{N})]={\rm
AMFEM}(\mathcal{T}_{0},f,\varepsilon,\theta)$, and ${\rm
osc}_{N}^{2} :={\rm
osc}_{\mathcal{T}_{N}}^{2}(p_{N},\mathcal{T}_{N})$. If
$(p,f,A)\in\mathbb{A}_{s}$ and $f\in\mathcal{A}_{0}^{s}$, then there
exist a positive constant $C$ depending only on data $(A,f)$,
refinement depth $b$, and $\mathcal{T}_{0}$, but independent of $s$,
such that
\begin{equation}\label{Theorem 6.10}
(\mathcal{E}_{N}^{2}+{\rm osc}_{N}^{2})^{1/2}\leq
C^{s}\Theta^{s}(s,\theta)(|(p,f,A)|_{s}+||f||_{\mathcal{A}_{0}^{s}})(\#\mathcal{T}_{N}-
\#\mathcal{T}_{0})^{-s}.
\end{equation}
\end{theorem}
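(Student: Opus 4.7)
The plan is to exploit the two-step structure of the algorithm: the APPROX step handles the non-polynomial part of $f$, while the AMFEM step then operates on a piecewise-polynomial right-hand side and thus falls under the scope of Theorem \ref{thm 6.9}, for which the $f$-oscillation in the contraction property vanishes.

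First, I would invoke Lemma \ref{lem 6.5} with tolerance $\varepsilon/2$ to obtain $\mathcal{T}_{H}$ with ${\rm osc}(f,\mathcal{T}_{H})\leq\varepsilon/2$ and $\#\mathcal{T}_{H}-\#\mathcal{T}_{0}\lesssim\|f\|_{\mathcal{A}_{0}^{s}}^{1/s}\varepsilon^{-1/s}$. Since $f_{H}$ is then piecewise polynomial of degree $\leq l$ on $\mathcal{T}_{H}$, Theorem \ref{thm 6.9} applies to $\mathrm{AMFEM}(\mathcal{T}_{H},f_{H},\varepsilon/2,\theta)$ and produces $\mathcal{T}_{N}$ with $\#\mathcal{T}_{N}-\#\mathcal{T}_{H}\lesssim C\Theta(s,\theta)|(\widetilde{p},f_{H},A)|_{s}^{1/s}\varepsilon^{-1/s}$, together with the quasi-optimal decay of the discrete stress $p_{N}$ toward $\widetilde{p}=A\nabla\widetilde{u}$ from (\ref{quasi-optimality2.5}), namely $\{\|A^{-1/2}(\widetilde{p}-p_{N})\|_{L^{2}(\Omega)}^{2}+\|h_{N}{\rm div}(\widetilde{p}-p_{N})\|_{L^{2}(\Omega)}^{2}+{\rm osc}_{N}^{2}\}^{1/2}\lesssim\varepsilon/2$.

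To pass from $\widetilde{p}$ back to the true flux $p$, I would use Lemma \ref{lem 6.3} to bound $\|A^{-1/2}(p-\widetilde{p})\|_{L^{2}(\Omega)}\leq C_{0}^{1/2}{\rm osc}(f,\mathcal{T}_{H})\leq C_{0}^{1/2}\varepsilon/2$. Moreover, since $f_{H}\in L_{H}\subset L_{N}$ implies $\Pi_{L_{N}}f_{H}=f_{H}$, the second equation of (\ref{quasi-optimality 2.3}) gives ${\rm div}\,p_{N}=-f_{H}$, hence ${\rm div}(p-p_{N})=-(f-f_{H})$ and $\|h_{N}{\rm div}(p-p_{N})\|_{L^{2}(\Omega)}\leq{\rm osc}(f,\mathcal{T}_{H})\leq\varepsilon/2$. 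A triangle inequality combining these with the Step 2 bound delivers $(\mathcal{E}_{N}^{2}+{\rm osc}_{N}^{2})^{1/2}\lesssim\varepsilon$.

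The main obstacle I expect is controlling the approximation-class constant $|(\widetilde{p},f_{H},A)|_{s}$ in terms of $|(p,f,A)|_{s}+\|f\|_{\mathcal{A}_{0}^{s}}$, since $\mathbb{A}_{s}$ is defined intrinsically through the unknown solution. The idea is: given a near-optimal $\mathcal{T}\in\mathcal{P}_{N}$ realizing $\sigma(N;p,f,A)$, its overlay (Lemma \ref{lem 6.4}) with a mesh realizing $\inf_{\mathcal{T}'\in\mathcal{P}_{N}}{\rm osc}(f,\mathcal{T}')$ is admissible for $(\widetilde{p},f_{H},A)$, and its total error is controlled by the sum of the two best-approximation values via Lemma \ref{lem 6.3} and the identity ${\rm div}(\widetilde{p}-p_{\mathcal{T}})={\rm div}(p-p_{\mathcal{T}})+(f-f_{H})$. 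Once this perturbation bound is in hand, summing $\#\mathcal{T}_{N}-\#\mathcal{T}_{0}=(\#\mathcal{T}_{N}-\#\mathcal{T}_{H})+(\#\mathcal{T}_{H}-\#\mathcal{T}_{0})$, inverting to express $\varepsilon$ as a function of $\#\mathcal{T}_{N}-\#\mathcal{T}_{0}$, and combining with $(\mathcal{E}_{N}^{2}+{\rm osc}_{N}^{2})^{1/2}\lesssim\varepsilon$ yields the claimed rate (\ref{Theorem 6.10}).
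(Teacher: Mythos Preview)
Your proposal is correct and follows essentially the same route as the paper: invoke APPROX (Lemma \ref{lem 6.5}) to reduce the $f$-oscillation, apply Theorem \ref{thm 6.9} with the polynomial right-hand side $f_{H}$, use Lemma \ref{lem 6.3} to transfer between $p$ and $\widetilde{p}$, and add the two cardinality counts. The paper simply asserts the perturbation bound $|(\widetilde{p},f_{H},A)|_{s}\lesssim|(p,f,A)|_{s}+\|f\|_{\mathcal{A}_{0}^{s}}$ (equation (\ref{Theorem 6.10.3})) without argument, so your overlay-based sketch for this step in fact supplies detail the paper omits; your direct computation ${\rm div}(p-p_{N})=-(f-f_{H})$ is also a slightly cleaner alternative to the paper's triangle-inequality splitting through $\widetilde{p}$ in (\ref{Theorem 6.10.6}).
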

\begin{proof} By Lemma \ref{lem 6.5}, there exists a
triangulation $\mathcal{T}_{k}$ such that
\begin{equation}\label{Theorem 6.10.1}
{\rm osc}(f,\mathcal{T}_{k})\leq\varepsilon,\ \ {\rm and}\ \
\#\mathcal{T}_{k}-\#\mathcal{T}_{0}\leq
C||f||_{\mathcal{A}_{0}^{s}}^{1/s}\varepsilon^{-1/s}.
\end{equation}
Let $f_{k} :=\Pi_{L_{k}}f$ denote the $L^{2}-$projection of $f$ over
$L_{k}$, and $(\tilde{p},\tilde{u})$ be a pair of solutions of (\ref{quasi-optimality
2.1}) with respect to the right-hand side term $f_{k}$. By Lemma \ref{lem 6.3}
(stable result), we obtain
\begin{equation}\label{Theorem 6.10.2}
||A^{-1/2}(p-\tilde{p})||_{L^{2}(\Omega)}\leq C_{0}^{1/2}{\rm
osc}(f,\mathcal{T}_{k})\leq C_{0}^{1/2}\varepsilon.
\end{equation}

By the definition of $\mathbb{A}_{s}$, and the assumption that
$(p,f,A)\in\mathbb{A}_{s}$, it follows
$(\tilde{p},f_{k},A)\in\mathbb{A}_{s}$ and
\begin{equation}\label{Theorem 6.10.3}
|(\tilde{p},f_{k},A)|_{s}\lesssim|(p,f,A)|_{s}+||f||_{\mathcal{A}_{0}^{s}}.
\end{equation}
We then use the stopping criteria (\ref{Lemma 6.9.9}) and apply
Theorem \ref{thm 6.9} to $(\tilde{p},\tilde{u})$ to obtain
\begin{equation}\label{Theorem 6.10.4}
\begin{array}{lll}
||A^{-1/2}(\tilde{p}-p_{N})||_{L^{2}(\Omega)}^{2}&+&||h_{N}{\rm
div}(\tilde{p}-p_{N})||_{L^{2}(\Omega)}^{2}\vspace{2mm}\\
&+&{\rm osc}_{\mathcal{T}_{N}}^{2}(p_{N},\mathcal{T}_{N})
\lesssim\eta_{\mathcal{T}_{N}}^{2}(p_{N},\mathcal{T}_{N})\leq\varepsilon^{2}
\end{array}
\end{equation}
and
\begin{equation}\label{Theorem 6.10.5}
\#\mathcal{T}_{N}-\#\mathcal{T}_{k}\lesssim
C\Theta(s,\theta)|(\tilde{p},f_{k},A)|_{s}^{1/s}\varepsilon^{-1/s}.
\end{equation}

Notice that $||h_{N}{\rm
div}(p-\tilde{p})||_{L^{2}(\Omega)}\leq||h_{k}{\rm
div}(p-\tilde{p})||_{L^{2}(\Omega)}={\rm osc}(f,\mathcal{T}_{k})$,
and $\Theta(s,\theta)>1$. A combination of (\ref{Theorem
6.10.1})-(\ref{Theorem 6.10.5}) yields
\begin{equation}\label{Theorem 6.10.6}
\begin{array}{lll}
&\ &\mathcal{E}_{N}^{2}+{\rm
osc}_{\mathcal{T}_{N}}^{2}(p_{N},\mathcal{T}_{N})\leq
2||A^{-1/2}(p-\tilde{p})||_{L^{2}(\Omega)}^{2}+
2||h_{N}{\rm div}(p-\tilde{p})||_{L^{2}(\Omega)}^{2}\vspace{2mm}\\
&\ &\ \ +2||A^{-1/2}(\tilde{p}-p_{N})||_{L^{2}(\Omega)}^{2}
+2||h_{N}{\rm div}(\tilde{p}-p_{N})||_{L^{2}(\Omega)}^{2}+
{\rm osc}_{\mathcal{T}_{N}}^{2}(p_{N},\mathcal{T}_{N})\vspace{2mm}\\
&\ &\ \ \lesssim2C_{0}\varepsilon^{2}+2{\rm
osc}^{2}(f,\mathcal{T}_{k})+2\varepsilon^{2}\lesssim\varepsilon^{2}
\end{array}
\end{equation}
and
\begin{equation}\label{Theorem 6.10.7}
\begin{array}{lll}
\#\mathcal{T}_{N}-\#\mathcal{T}_{0}&=&\#\mathcal{T}_{N}-\#\mathcal{T}_{k}+
\#\mathcal{T}_{k}-\#\mathcal{T}_{0}\vspace{2mm}\\
&\lesssim&C\Theta(s,\theta)|(\tilde{p},f_{k},A)|_{s}^{1/s}\varepsilon^{-1/s}+
C||f||_{\mathcal{A}_{0}^{s}}^{1/s}\varepsilon^{-1/s}\vspace{2mm}\\
&\lesssim&C\Theta(s,\theta)(|(p,f,A)|_{s}+
||f||_{\mathcal{A}_{0}^{s}})^{1/s}\varepsilon^{-1/s}.
\end{array}
\end{equation}
The above inequality (\ref{Theorem 6.10.7}) implies
\begin{equation}\label{Theorem 6.10.8}
\varepsilon\lesssim C^{s}\Theta^{s}(s,\theta)(|(p,f,A)|_{s}+
||f||_{\mathcal{A}_{0}^{s}})(\#\mathcal{T}_{N}-\#\mathcal{T}_{0})^{-s}.
\end{equation}
The desired result (\ref{Theorem 6.10}) then follows from the above
two inequalities (\ref{Theorem 6.10.6}) and (\ref{Theorem 6.10.8}).
\end{proof}

\section{Conclusions}
It is  time to recall the main results in this paper. Firstly, we
have removed the restriction of the coefficient matrix of the PDEs
which is required in Carstensen' work, and have analyzed the
efficiency of the a posteriori error estimator obtained by
Carstensen, for RT, BDM, and BDFM elements . Secondly, For the
AMFEM, as
  is customary in practice, the AMFEM marks exclusively according to
  the error estimator and performs a minimal element refinement
  without the interior node property. We have proved that the sum of the
  stress variable error in a weighted norm and the scaled error
  estimator, reduces with a fixed factor between two successive
  adaptive loops, up to an oscillation of data $f$. This geometric
  decay is instrumental to obtain the optimal cardinality of the
  AMFEM. Finally, we have shown that the stress variable error in a weighted norm plus
  the oscillation of data yields a decay rate in
  terms of the number of degrees of freedom as dictated by the best
  approximation for this combined nonlinear quantity, namely we have
  obtain the quasi-optimal convergence rate for the AMFEM.

\end{document}